\definecolor{darkgreen}{rgb}{0,0.45,0}
\definecolor{darkred}{rgb}{0.6,0,0}
\theoremstyle{plain}
\newtheorem{thm}{Theorem}
\newtheorem*{thm*}{Theorem}
\newtheorem{prop}[thm]{Proposition}
\newtheorem{lem}[thm]{Lemma}
\newtheorem*{prop*}{Proposition}
\newtheorem{cor}[thm]{Corollary}
\theoremstyle{definition}
\newtheorem{defn}[thm]{Definition}
\theoremstyle{remark}
\title[Schur functors and categorified plethysm]{Schur functors and categorified plethysm}
\author[Baez]{John C.\ Baez$^{1,2}$} 
\author[Moeller]{Joe Moeller$^3$}
\author[Trimble]{\\Todd Trimble$^4$}
\address{$^1$Department of Mathematics, University of California, Riverside, CA 92521, USA}
\address{$^2$Centre for Quantum Technologies, National University of Singapore, 117543, Singapore}
\address{$^3$National Institute of Standards and Technology, Gaithersburg, MD 20899, USA}
\address{$^4$Department of Mathematics, Western Connecticut State University, Danbury, CT 06810, USA}
\email{baez@math.ucr.edu, joseph.moeller@nist.gov, trimblet@wcsu.edu}
\DeclareFontFamily{U}{min}{}
\DeclareFontShape{U}{min}{m}{n}{<-> udmj30}{}
\newcommand{\comp}{\bullet}
\newcommand{\define}[1]{{\bf \boldmath{#1}}\index{#1}}
\newcommand{\maps}{\colon}
\newcommand{\To}{\Rightarrow}
\newcommand{\ob}{\mathrm{Ob}}
\newcommand{\op}{^\mathrm{op}}
\newcommand{\core}{\mathsf{core}}
\newcommand{\disc}{\mathsf{disc}}
\renewcommand{\hom}{\mathsf{hom}}
\newcommand{\category}[1]{\mathsf{#1}}
\newcommand{\A}{\category A}
\newcommand{\B}{\category B}
\newcommand{\C}{\category C}
\newcommand{\D}{\category D}
\newcommand{\I}{\category I}
\newcommand{\J}{\category J}
\newcommand{\M}{\category M}
\newcommand{\N}{\mathbb N}
\newcommand{\R}{\category R}
\renewcommand{\S}{\category S}
\newcommand{\W}{\category W}
\newcommand{\X}{\category X}
\newcommand{\Z}{\mathbb Z}
\newcommand{\namedcat}[1]{\mathsf{#1}}
\newcommand{\Ab}{\namedcat{Ab}}
\newcommand{\Alg}{\namedcat{Alg}}
\newcommand{\Bialg}{\namedcat{Bialg}}
\newcommand{\LAdj}{\namedcat{LAdj}}
\newcommand{\RAdj}{\namedcat{RAdj}}
\newcommand{\Biring}{\namedcat{Biring}}
\newcommand{\Cat}{\namedcat{Cat}}
\newcommand{\CMon}{\namedcat{CMon}}
\newcommand{\G}{\mathsf{G}} 
\newcommand{\DG}{\mathsf{DG}} 
\newcommand{\Grp}{\namedcat{Grp}}
\newcommand{\Grpd}{\namedcat{Grpd}}
\newcommand{\ksbar}{\overline{k\S}}
\newcommand{\Poly}{\namedcat{Poly}}
\newcommand{\Rep}{\namedcat{Rep}}
\newcommand{\Rig}{\namedcat{Rig}}
\newcommand{\Ring}{\namedcat{Ring}}
\newcommand{\Set}{\namedcat{Set}}
\newcommand{\SMC}{\namedbicat{SMCat}}
\newcommand{\SMLin}{\namedbicat{SMLin}}
\newcommand{\Schur}{\namedcat{Schur}}
\newcommand{\TRig}{2\mhyphen\namedbicat{Rig}}
\newcommand{\TBirig}{2\mhyphen\namedbicat{Birig}}
\newcommand{\Vect}{\namedcat{Vect}}
\newcommand{\Fin}{\namedcat{Fin}}
\newcommand{\Mat}{\namedcat{Mat}}
\newcommand{\namedbicat}[1]{\mathbf{#1}}
\newcommand{\CCat}{\namedbicat{Cat}}
\newcommand{\Lin}{\namedbicat{Lin}}
\newcommand{\Cauch}{\namedbicat{Cauch}}
\newcommand{\GGrpd}{\namedbicat{Grpd}}
\newcommand{\bC}{\namedbicat{C}}
\renewcommand{\SS}{\mathbb{S}}
\mathchardef\mhyphen="2D
\newcommand{\ho}{_\mathsf{h}}
\definecolor {processblue}{cmyk}{0.9,0.5,0,0}
\tikzstyle{simple}=[-,line width=2.000]
\tikzstyle{arrow}=[-,postaction={decorate},decoration={markings,mark=at position .5 with {\arrow{>}}},line width=1.100]
\tikzstyle{none}=[inner sep=-1pt]
\tikzstyle{species}=[circle,fill=none,draw=black,scale=1.0]
\tikzstyle{transition}=[rectangle,fill=none,draw=black,scale=1.15]
\tikzstyle{empty}=[circle,fill=none, draw=none]
\tikzstyle{inputdot}=[circle,fill=black,draw=black, scale=.5]
\tikzstyle{dot}=[circle,fill=black,draw=black]
\tikzstyle{bounding}=[circle,dashed, fill=none,draw=black, scale=9.00]
\tikzstyle{simple}=[-,draw=black,line width=1.000]
\tikzstyle{inarrow}=[-,draw=black,postaction={decorate},decoration={markings,mark=at position .5 with {\arrow{>}}},line width=1.000]
\tikzstyle{tick}=[-,draw=black,postaction={decorate},decoration={markings,mark=at position .5 with {\draw (0,-0.1) -- (0,0.1);}},line width=1.000]
\tikzstyle{inputarrow}=[->,draw=black, shorten >=.05cm]
\tikzset{main node/.style={circle,fill=blue!20,draw,minimum size=1cm,inner sep=0pt},}
\tikzstyle{construct}=[fill=white, draw=black, shape=circle]
\tikzstyle{universal}=[fill=black, draw=black, shape=circle]
\numberwithin{thm}{section}
\begin{document}

\begin{abstract}
It is known that the Grothendieck ring of the category of Schur functors---or equivalently, the representation ring of the permutation groupoid---is the ring of symmetric functions. This ring has a rich structure, much of which is encapsulated in the fact that it is a `plethory': a monoid in the category of birings with its substitution monoidal structure.  We show that similarly the category of Schur functors is a `2-plethory', which descends to give the plethory structure on symmetric functions. Thus, much of the structure of symmetric functions exists at a higher level in the category of Schur functors.   

\end{abstract}

\maketitle
\setcounter{tocdepth}{1} 
\tableofcontents

\section{Introduction}
\label{sec:intro}

The symmetric groups $S_n$ play a distinguished role in group theory, and their representations are rich in structure. Much of this structure only reveals itself if we collect all these groups into the groupoid $\S$ of finite sets and bijections. Let $\Fin\Vect$ be the category of finite-dimensional vector spaces over a field $k$ of characteristic zero. Any functor $\rho \maps \S \to \Fin\Vect$ can be expressed as a direct sum of representations of the groups $S_n$. We denote the full subcategory of $\Fin\Vect^\S$ consisting of finite direct sums of finite-dimensional representations by $\Schur$.

This category $\Schur$ binds all the finite-dimensional representations of all the symmetric groups $S_n$ into a single entity, revealing more structure than can be seen working with these groups one at a time. In particular, $\Schur$ has a monoidal structure called the `plethysm' tensor product, with respect to which $\Schur$ acts on the category $\Rep(G)$ of representations of \emph{any} group $G$. Each object of $\Schur$ acts as an endofunctor of $\Rep(G)$ called a  `Schur functor'. Thus, $\Schur$ plays a fundamental role in representation theory, which we aim to clarify.

We also apply our results on $\Schur$ to study the ring of symmetric functions, denoted $\Lambda$. This ring shows up in many guises throughout mathematics. For example:
\begin{itemize}
    \item It is the subring of $\Z[[x_1, x_2, \dots]]$ consisting of power series of bounded degree that are invariant under all permutations of the variables: these are called `symmetric functions'.
    \item It is the Grothendieck group of the category $\Schur$.
    \item It is the cohomology ring $H^*(BU)$, where $BU$ is the classifying space of the infinite-dimensional unitary group.
\end{itemize}
As a mere ring, $\Lambda$ is not very exciting: it is isomorphic to a polynomial ring in countably many generators. But $\Lambda$ is richly endowed with a plethora of further structure \cite{Macdonald, Egge}. Hazewinkel \cite{Hazewinkel} writes:
\begin{quote}
    It seems unlikely that there is any object in mathematics richer and/or more beautiful than this one [....]
\end{quote}

Following ideas of Tall and Wraith \cite{TallWraith}, Borger and Wieland \cite{Plethystic} defined a concept of `plethory' (which we call `ring-plethory') that encapsulates much of this rich structure on $\Lambda$. Here we derive the ring-plethory structure on $\Lambda$ from a `2-plethory' structure on $\Schur$, of which $\Lambda$ is the Grothendieck group. More than merely proving that $\Lambda$ is a ring-plethory, this shows that much of its rich structure exists \emph{at a higher level} in the category of Schur functors.

What is a ring-plethory? To understand this, it is good to start with the simplest example of all, $\Z[x]$, the ring of polynomials with integer coefficients in one variable. (Following the algebraic geometers, we always use `ring' to mean `commutative ring with unit'.) This ring $\Z[x]$ is the free ring on one generator. But besides the usual ring operations, $\Z[x]$ also has `co-operations' that act like ring operations going backwards. These are all derived by exploiting the freeness property. Namely, $\Z[x]$ is equipped with the unique ring homomorphisms that send $x$ to the indicated elements:
\begin{itemize}
    \item `coaddition':
    \begin{align*}
        \alpha \maps \Z[x] &\to \Z[x] \otimes \Z[x]
        \\
        x & \mapsto x \otimes 1 + 1 \otimes x
    \end{align*}
    \item `co-zero', or the `coadditive counit':
    \begin{align*}
        o \maps \Z[x] &\to \Z
        \\
        x & \mapsto 0
    \end{align*}
      \item `co-negation':
    \begin{align*}
        \nu \maps \Z[x] &\to
        \Z[x] 
        \\
        x & \mapsto - x
    \end{align*}
    \item `comultiplication':
    \begin{align*}
        \mu \maps \Z[x] &\to \Z[x] \otimes \Z[x]
        \\
        x & \mapsto x \otimes x
    \end{align*}
    \item and `co-one', or the `comultiplicative counit':
    \begin{align*}
        \epsilon \maps \Z[x] &\to \Z
        \\
        x & \mapsto 1.
    \end{align*}
\end{itemize}
These obey all the usual ring axioms if we regard them as morphisms in the \emph{opposite} of the category of rings. Thus we say $\Z[x]$ is a \define{biring}: a ring object in $\Ring\op$.  

Why is $\Z[x]$ a biring? Any ring can be seen as the ring of functions on some kind of space, and $\Z[x]$ is the ring of functions on the affine line, $\A^1$. Grothendieck made this into a tautology by \emph{defining} the category of affine schemes to be $\Ring\op$ and \emph{defining} $\A^1$ to be the ring $\Z[x]$ seen as an object in $\Ring\op$. But the affine line itself can be made into a ring, much like the real or complex line. Thus $\A^1$ becomes a ring object in the category of affine schemes. But this is precisely a biring! The formulas above express the ring operations on $\A^1$ as co-operations on $\Z[x]$.

A biring can equivalently be seen as a ring $B$ such that the representable functor 
\[
    \Ring(B, -) \maps \Ring \to \Set
\]
is equipped with a lift to a functor $\Phi_B$ taking values in the category of rings, as follows \cite{TallWraith}:
\[
\begin{tikzcd}[column sep = large]
    &
    \Ring
    \arrow[d, "U"]
    \\
    \Ring
    \arrow[ur, "\Phi_B"]
    \arrow[ur, swap, ""{name = phi}, phantom, bend right = 5, pos = 0.6]
    \arrow[r, swap, "{\Ring(B, -)}"]
    \arrow[from = phi, r, Rightarrow, "\sim", sloped]
    &
    \Set.
\end{tikzcd}
\] 
Given a biring $B$, the co-operations on $B$ give $\Ring(B, R)$ a ring structure for any ring $R$ in a way that depends functorially on $R$. For example, since $\Z[x]$ is the free ring on one generator, $\Ring(\Z[x], -)$ assigns to any ring its underlying set. Thus $\Ring(\Z[x], -)$ lifts to the identity functor on $\Ring$. This gives $\Z[x]$ a natural biring structure, and one can check that this is the one described above. 

This second viewpoint is fruitful because endofunctors on $\Ring$ can be composed. Though not all endofunctors on $\Ring$ are representable, those that are representable are closed under composition. Thus for any birings $B$ and $B'$ there is a biring $B' \odot B$ such that
\[
    \Phi_B \circ \Phi_{B'} \cong \Phi_{B' \odot B}. 
\]
This puts a monoidal structure $\odot$ on the category $\Biring$, called the composition tensor product. A \define{ring-plethory} is then a monoid object in $(\Biring, \odot)$.  Since the category $\Biring$ is defined as the \emph{opposite} of the category of ring objects in $\Ring\op$, $B$ is a ring-plethory when $\Phi_B$ is a comonad.

For example, since $\Ring(\Z[x], -)$ lifts to the identity functor on $\Ring$, $\Z[x]$ with the resulting biring structure is actually the unit object for the plethysm tensor product. The unit object in a monoidal category is always a monoid object in a canonical way, so $\Z[x]$ becomes a ring-plethory. Concretely, this ring-plethory structure on $\Z[x]$ simply captures the fact that one can \emph{compose} polynomials in one variable. 

A more interesting ring-plethory is $\Lambda$, the ring of symmetric functions.  Its structure is often described in terms of fairly elaborate algebraic constructions, even by those familiar with ring-plethories \cite{Plethystic,TallWraith}.  It seems not to be generally appreciated that there is a  conceptual explanation for all this structure. It is the purpose of this paper to provide that explanation. 

We achieve this by categorifying the story so far, developing a theory of 2-plethories, and showing that $\Schur$ is a 2-plethory. Using this fact we show that $\Lambda$, the Grothendieck group of $\Schur$, is a ring-plethory. 

Before doing this, we must categorify the concepts of ring and biring. Or rather, since it is problematic to categorify subtraction directly, we start by omitting additive inverses and work not with rings but with `rigs', which again we assume to be commutative. A \define{birig} is then a rig object in the opposite of the category of rigs. For example, the free rig on one generator is $\N[x]$, and this becomes a birig with co-operations defined just as for $\Z[x]$ above---except for co-negation.

The concept of plethory also generalizes straightforwardly from rings to rigs. In fact it generalizes to algebras of any monad $M$ on $\Set$. Such a generalized plethory has been called a `Tall--Wraith monoid' \cite{TallWraith, StaceyWhitehouse}, but we prefer to call it an `$M$-plethory' in order to refer to various specific monads $M$. If $M$ is the monad whose algebras are rings, then $M$-plethories are ring-plethories, but when $M$ is the monad for rigs, we call an $M$-plethory a `rig-plethory'. For example, just as $\Z[x]$ becomes a ring-plethory, $\N[x]$ becomes a rig-plethory. This captures the fact that we can compose polynomials in $\N[x]$.

There are various ways to categorify the concept of rig. Since our goal is to study Schur functors and some related classical topics in representation theory, we shall fix a field $k$ of characteristic zero and define a `2-rig' to be a symmetric monoidal Cauchy complete linear category. In more detail:
\begin{defn} 
\label{def:2rig}
    A \define{linear category} is an essentially small category enriched over $\Vect$, the category of vector spaces over $k$. A \define{linear functor} is a $\Vect$-enriched functor between linear categories. A linear category is \define{Cauchy complete} when it has biproducts and all idempotents split. A \define{symmetric monoidal linear category} is a linear category with a symmetric monoidal structure for which the tensor product is bilinear on hom-spaces. A \define{2-rig} is a  symmetric monoidal linear category that is also Cauchy complete. 
\end{defn}

In language perhaps more familiar to algebraists, a linear category is Cauchy complete when it has finite direct sums and any idempotent endomorphism has a cokernel. In the definition of 2-rig, we do not need to impose a rule saying that the tensor product preserves biproducts and splittings of idempotents in each argument since this is automatic: these are `absolute' colimits for linear categories, meaning they are preserved by any linear functor.  This understanding of absolute colimits for linear categories has been folklore at least since Lawvere first wrote about Cauchy completeness \cite{lawvere1973metric}, but details can be found in \cite[Cor.\ 4.22]{LackTendas}.

With our definition of 2-rig, $\Schur$ turns out to be the free 2-rig on one generator.  Many other important categories are also 2-rigs:
\begin{itemize}
    \item the category $\Fin\Vect$ of finite-dimensional vector spaces over $k$, 
    \item the category of representations of any group on finite-dimensional vector spaces, 
    \item the category of finite-dimensional $G$-graded vector spaces for any group $G$, 
    \item the category of bounded chain complexes of finite-dimensional vector spaces, 
    \item the category of finite-dimensional super vector spaces, 
    \item  for $k = \mathbb{R}$ or $\mathbb{C}$, the category of finite-dimensional vector bundles over any topological space, or smooth vector bundles over any smooth manifold, 
    \item  the category of algebraic vector bundles over any algebraic variety over $k$, 
    \item  the category of coherent sheaves of finite-dimensional vector spaces over any algebraic variety (or scheme or algebraic stack) over $k$.
\end{itemize}
Some of these categories are abelian, but categories of vector bundles are typically not. They are still Cauchy complete, and this is another reason we develop our theory at this level of generality.

There is a 2-category of 2-rigs, denoted $\TRig$, and we define a \define{2-birig} to be a 2-rig $\B$ such that the 2-functor $\TRig(\B, -) \maps \TRig \to \CCat$ is equipped with a lift to a 2-functor $\Phi_\B$ taking values in 2-rigs:
\[
\begin{tikzcd}[column sep = large]
    &
    \TRig
    \arrow[d, "U"]
    \\
    \TRig
    \arrow[ur, "\Phi_B"]
    \arrow[ur, swap, ""{name = phi}, phantom, bend right = 5, pos = 0.6]
    \arrow[r, swap, "{\TRig(B, -)}"]
    \arrow[from = phi, r, Rightarrow, "\sim", sloped]
    &
    \CCat
\end{tikzcd}
\]

There is also a 2-category of 2-birigs. Analogously with birings, for any 2-birigs $\B$ and $\B'$ there is a 2-birig $B \odot B'$ corresponding to endofunctor composition. This equips the 2-category $\TBirig$ with a monoidal structure. We define a \define{2-plethory} to be a pseudomonoid---roughly, a monoid object up to coherent isomorphism---in $(\TBirig, \odot)$. The multiplication in this pseudomonoid is called the `plethysm' tensor product. 

Just as $\N[x]$ is the free rig on one generator, we prove that $\Schur$ is the free 2-rig on one generator. It follows that $\TRig(\Schur, -)$ lifts to the identity functor on $\TRig$:
\[
\begin{tikzcd}[column sep = huge]
    &
    \TRig
    \arrow[d, "U"]
    \\
    \TRig
    \arrow[ur, "1"]
    \arrow[ur, swap, ""{name = phi}, phantom, bend right = 5, pos = 0.6]
    \arrow[r, swap, "{\TRig(\Schur, -)}"]
    \arrow[from = phi, r, Rightarrow, "\sim", sloped]
    &
    \CCat.
\end{tikzcd}
\] 
This makes $\Schur$ into a 2-birig, and since $1 \circ 1 = 1$, $\Schur$ becomes a 2-plethory. This captures the fact that we can compose Schur functors. 

Taking the Grothendieck group of $\Schur$, we obtain the known ring-plethory structure on $\Lambda$, the ring of symmetric functions. The birig structure is fairly straightforward. The rig-plethory structure takes considerably more work. Most subtle of all is the biring structure, and in particular the co-negation: this involves  $\Z_2$-graded chain complexes of Schur functors, and is connected to the `rule of signs' in Joyal's theory of species \cite{Especies}. 

\subsection*{Outline of the paper}

\cref{sec:Schur} begins with an overview of the classical theory of Schur functors.  We then introduce the category $\Poly$ of `polynomial species', which are a special kind of linear species in the sense of Joyal \cite{Especies}.  Then we give our abstract definition of Schur functors as endomorphisms of the forgetful 2-functor $U \maps \TRig \to \CCat$, and show that any polynomial species gives such a Schur functor.

\cref{sec:Poly} is dedicated to proving our first main result, \cref{thm:schur_versus_poly}, which says that the category $\Schur$ of abstract Schur functors is equivalent to the category $\Poly$. En route, we prove in Theorems  \ref{thm:poly_versus_kS} and \ref{thm:representing_U} that $\Poly$ is the underlying category of the free 2-rig on one generator, which we call $\ksbar$, and that this 2-rig represents the 2-functor $U \maps \TRig \to \CCat$.

In \cref{sec:2-birig}, we define 2-birigs, a categorification of the notion of biring. In \cref{thm:schur_2-birig} we show that $\Schur$ has a 2-birig structure coming from its equivalence with the free 2-rig on one generator.

In \cref{sec:plethysm}, we begin by exposing an alternative perspective on birigs. Birigs and birings are examples of the more general notion of `$M$-bialgebras': that is, bialgebras of a monad $M$ on $\Set$. Moreover, the category of $M$-bialgebras admits a substitution (non-symmetric) monoidal structure. This allows us to define `$M$-plethories' as monoids with respect to this monoidal structure \cite{StaceyWhitehouse}. Then we use this perspective to categorify the notion of rig-plethory, obtaining the concept of 2-plethory. In \cref{thm:ksbar_as_2-plethory} we give $\Schur$ the structure of a 2-plethory.

In \cref{sec:lambdaplus} we begin the decategorification process by studying the rig of isomorphism classes of objects in $\Schur$. We denote this rig by $\Lambda_+$, and call its elements `positive symmetric functions', since it is a sub-rig of the famous ring of symmetric functions, $\Lambda$. In \cref{thm:lambdaplus-forms-birig} we equip $\Lambda_+$ with a birig structure using the 2-birig structure on $\Schur$, and in \cref{thm:lambdaplus-forms-rig-plethory} we equip $\Lambda_+$ with a rig-plethory structure using the 2-plethory structure on $\Schur$.

In \cref{sec:lambda} we study the group completion of $\Lambda_+$, which is $\Lambda$. This is evidently a ring, but making it into a biring is less straightforward: to define co-negation in this biring and prove its properties we need the homology of $\Z_2$-graded chain complexes of Schur functors. We make $\Lambda$ into a biring in \cref{thm:lambda_biring}, and make it into a ring-plethory in \cref{thm:lambda_plethory}.

\subsection*{Notation}

We use a sans serif font for 1-categories, e.g.\ $\Cat$, and a bold serif font for 2-categories, e.g.\ $\CCat$. On occasion we need to think of a 1-category as a locally discrete 2-category, in which case we do not change the font, and hope it is clear by context. Other times, we need to modify a 2-category to give a 1-category. In these cases, we do not change the font, but merely decorate the name of the 2-category to indicate what was done, e.g.\ $\CCat\ho$.

\subsection*{Acknowledgements}

We dedicate this paper to Andr\'e Joyal, whose work has been so inspiring to us.  We would like to thank Abdelmalek Abdesselam, Martin Brandenburg, Richard Garner, Julia Ramos Gonz\'alez, Steve Lack, Jade Master, Jackie Shadlen, Mike Shulman, David Spivak, Benjamin Steinberg, and Christian Williams for helpful conversations, and the referees for extensive constructive feedback.  Special thanks go to Allen Knutson as the `Primary Instigator'. The work in this paper was completed while the second author was affiliated with UCR, not with NIST.

\section{Schur functors}
\label{sec:Schur}

We begin this section with a brief overview of the classical theory of Schur functors and its relation to the representation theory of the symmetric groups. In fact, we do not need and do not use any of this classical material to develop our account. Thus, \cref{sec:classical} is included merely to smooth the transition from older more established accounts, for those readers already familiar with them, to the new approach based more on categorical principles which we set out starting in \cref{sec:abstract}.

\subsection{Classical treatment}
\label{sec:classical}

Classically, a Schur functor is a specific sort of functor
\[  
    F \maps \Fin\Vect \to \Fin\Vect
\]
where $\Fin\Vect$ is the category of finite-dimensional vector spaces over some fixed field $k$ of characteristic zero. 
Namely, it is a functor obtained from an irreducible representation of the symmetric group $S_n$ by sending a vector space $V$ to a certain subspace of $V^{\otimes n}$ as we describe below.
%

Irreducible representations of $S_n$ correspond to $n$-box Young diagrams, so Schur functors are usually described with the help of these. An $n$-box Young diagram is simply a way to write $n$ as a sum of natural numbers listed in decreasing order. For example, this 17-box Young diagram:
\[
    \yng(5,4,4,2,1,1) 
\]
describes the partition of $17$ as $5 + 4 + 4 + 2 + 1 + 1$. It also can be used to construct an irreducible complex representation of the symmetric group $S_{17}$, and thus a Schur functor. 

Classically, the relation between Young diagrams and Schur functors is often described using the group algebra of the symmetric group, $k[S_n]$. Given an $n$-box Young diagram $\lambda$, we can think of the operation `symmetrize with respect to permutations of the boxes in each row' as an element $p^S_\lambda \in k[S_n]$. Similarly, we can think of the operation `antisymmetrize with respect to permutations of the boxes in each column' as an element $p^A_\lambda \in k[S_n]$. By construction, each of these elements is idempotent. They do not commute, but their product $p^A_\lambda p^S_\lambda$, times a suitable nonzero constant, gives an idempotent $p_\lambda$.

The element $p_\lambda \in k[S_n]$ is called the \define{Young symmetrizer} corresponding to the $n$-box Young diagram $\lambda$. Since the group algebra $k[S_n]$ acts on $V^{\otimes n}$ by permuting the factors, the Young symmetrizer gives a projection
\[ 
    p_\lambda \maps V^{\otimes n} \to V^{\otimes n} 
\]
whose image is a subspace called $S_\lambda(V)$. Since $p_\lambda$ commutes with everything in $k[S_n]$, this subspace is invariant under the action of $S_n$, and it is a direct sum of copies of a specific irreducible representation of $S_n$. But the point is this: there is a functor
\[  
    S_\lambda \maps \Fin\Vect \to \Fin\Vect 
\]
which sends a space $V$ to the space $S_\lambda(V) \subseteq V^{\otimes n}$. This is called a `Schur functor'.  Departing slightly from tradition, we could call any finite direct sum of functors of the form $S_\lambda$ a `Schur functor'. We have many examples:

\begin{itemize}
    \item For each $n \geq 0$, the $n$th tensor power $V \mapsto V^{\otimes n}$ is a Schur functor. 
    \item If $F$ and $G$ are Schur functors, the functor $V \mapsto F(V) \oplus G(V)$ is a Schur functor.
    \item If $F$ and $G$ are Schur functors, the functor $V \mapsto F(V) \otimes G(V)$ is also a Schur functor.
    \item If $F$ and $G$ are Schur functors, the composite $V \mapsto F(G(V))$ is a Schur functor. This way of constructing Schur functors is known as \define{plethysm}.
    \item For any right $k[S_n]$-module $\rho$, there is a Schur functor $V \mapsto \rho \otimes_{k[S_n]} V^{\otimes n}.$
\end{itemize}

The last example illustrates an idea that becomes crucial in the next section.

\subsection{Abstract Schur functors}
\label{sec:abstract}

So far we have given a classical account of Schur functors as special endofunctors on $\Fin\Vect$.  However, Schur functors make sense much more broadly. We now show that they give endofunctors on any 2-rig. A somewhat novel feature of our treatment is that we \emph{do not require the theory of Young diagrams} to define Schur functors: instead we use 2-rigs and some notions from the theory of 2-categories.  From the realm of representation theory we need only Maschke's theorem---until \cref{lem:J-preserves-coproducts}, where we also use the fact that any field of characteristic zero is a splitting field for the symmetric groups.

Our strategy is as follows. In
\cref{sec:schur_functors_from_polynomial_species} we introduce `polynomial species' and show how any polynomial species $\rho$ gives an endofunctor $F_{\rho,R} \maps \R \to \R$ on any 2-rig $\R$.  Moreover, this endofunctor depends pseudonaturally on $\R$.  Following this cue, in \cref{sec:abstract_schur_functors} we define a Schur functor to be a pseudonatural transformation from the forgetful 2-functor $U \maps \TRig \to \Cat$ to itself.  Then, in \cref{sec:Poly}, we show that \emph{every} such Schur functor arises from a polynomial species.

\subsubsection{Schur functors from polynomial species}
\label{sec:schur_functors_from_polynomial_species}

In his famous paper introducing combinatorial species,  Joyal \cite{Especies} also introduced `linear' species, which are functors from the groupoid of finite sets to $\Vect$. We begin by describing a special kind of linear species which we call `polynomial species'.  Then we show how any polynomial species gives an endofunctor on any 2-rig.  

To define polynomial species it is convenient to work with a skeleton of the groupoid of finite sets, namely the \define{symmetric groupoid} $\S$ where objects are natural numbers, all morphisms are automorphisms, and the automorphisms of the object $n$ form the group $S_n$. 

\begin{defn} 
    A \define{polynomial species} is a functor $F \maps \S\op \to \Fin\Vect$ such that $F(n) \cong 0$ for all sufficiently large $n$. Let $\Poly$ be the category where objects are polynomial species and morphisms are natural transformations.
\end{defn}

Thanks to the `op', polynomial species can be seen as special $\Vect$-valued presheaves on $\S$.  However, since $\S$ is a groupoid we have $\S \cong \S\op$.  This allows us to treat $\Poly$ as a subcategory of the category of \define{representations} of $\S$, by which we simply mean functors $F \maps \S \to \Vect$. Every irreducible representation of $\S$ is finite-dimensional: it is really just an irreducible representation of some group $S_n$. Every representation of $\S$ is a direct sum of irreducibles. The category $\Poly$ may thus be identified with the full subcategory consisting of finite direct sums of irreducibles.  However, including the `op' means that for each $n$, $\rho(n)$ most naturally becomes a \emph{right} module of the group algebra $k[S_n]$.

We claim that for any polynomial species $\rho \maps \S \to \Fin\Vect$ and any 2-rig $\R$ there is a functor $F_{\rho,\R} \maps \R \to \R$ that sends any object $x \in \R$ to
    \[   
        F_{\rho, \R}(x) = \bigoplus_n \rho(n) \otimes_{k[S_n]} x^{\otimes n} .
    \]
However, we need to explicate the meaning of this expression.  The group algebra $k[S_n]$ is a monoid in $\Vect$, and $\rho(n)$ is a right $k[S_n]$-module.  How then are we forming the tensor product $\rho(n) \otimes_{k[S_n]} x^{\otimes n}$ as an object in $\R$?  First, note:

\begin{lem}
\label{prop:base_change}
    For any 2-rig $\R$ there is exactly one symmetric monoidal linear functor $i \maps \Fin\Vect \to \R$, up to monoidal linear natural isomorphism.
\end{lem}

\begin{proof}
    Let $\Mat$ be the linear category whose objects are integers $m \geq 0$ and whose morphisms $m \to n$ are $m \times n$ matrices with entries in $k$. Since $\R$ is Cauchy complete and in particular has finite biproducts, there is an evident linear functor 
    \[
        \Mat \to \R
    \] 
    which takes $m$ to $I^m$, the direct sum of $m$ copies of the tensor unit $I$. It is the unique linear functor taking $1$ to $I$, up to unique linear isomorphism. In the case $\R = \Fin\Vect$, the linear functor 
    \[
        \Mat \to \Fin\Vect
    \] 
    taking $1$ to $k$ is a linear equivalence (exhibiting $\Mat$ as a skeleton of $\Fin\Vect$). Because of this equivalence, we could equally well say that there is a linear functor 
    \[
        i \maps \Fin\Vect \to \R
    \] 
    which, up to unique linear isomorphism, is the unique linear functor taking $k$ to $I$. Notice that a symmetric monoidal functor of this form must take the tensor unit $k$ to $I$ (up to coherent isomorphism, as always), and in fact $i$ is symmetric monoidal, because there is a canonical isomorphism
    \[
        I^m \otimes I^n \cong I^{m n}, 
    \] 
    using the fact that $\otimes$ preserves direct sums in each argument, and the fact that there is a canonical isomorphism $I \otimes I \cong I$. 
\end{proof}

Using \cref{prop:base_change}, for any 2-rig $\R$ we can apply the functor $i \maps \Fin\Vect \to \R$ to the group algebra $k[S_n]$ and obtain a monoid in $\R$, which by abuse of notation we again call $k[S_n]$.   This monoid is isomorphic to a direct sum of copies of the unit object $I \in \R$, one for each permutation $\sigma \in \S_n$:
\[    k[S_n] \cong \bigoplus_{\sigma \in S_n} I \in \R. \]

For any object $x \in \R$, $x^{\otimes n}$ is a left module of $k[S_n]$, in the same manner as it would be in $\Fin\Vect$.  Using \cref{prop:base_change}, any right $k[S_n]$-module $A$ in $\Fin\Vect$ gives a right $k[S_n]$-module in $\R$, which we again call $A$.  Putting these together, we can use a coequalizer to define an object of $\R$ that we call $A \otimes_{k[S_n]} x^{\otimes n}$.  The key point is that while 2-rigs may not have all colimits, we can build $A \otimes_{k[S_n]} x^{\otimes n}$ using absolute colimits (which in this context means colimits that are preserved by any linear functor), as explained below.

\begin{lem}
\label{lem:group_algebra}
Suppose $A$ is a finitely generated right $k[S_n]$-module.  If we treat $A$ as module of $k[S_n]$ in $\mathsf{R}$ via $\Fin\Vect \to \mathsf{R}$, the following diagram has a coequalizer:
\[
    \begin{tikzcd}
        A \otimes k[S_n] \otimes x^{\otimes n} 
        \arrow[r, shift left = 0.3em, ""]
        \arrow[r, swap, shift right = 0.3em, ""]
        &
        A \otimes x^{\otimes n}
    \end{tikzcd}
    \]
where the top arrow comes from the right action of $k[S_n]$ on $A$, and the bottom arrow comes from the left action of $k[S_n]$ on $x^{\otimes n}$.
\end{lem}

\begin{proof}
We proceed in stages. The first stage is where $A$ is the right regular representation $k[S_n]$; the second is where $A$ is a finite sum of copies $N \cdot k[S_n]$ of the regular representation $k[S_n]$, and the third stage is where $A$ is a quotient of some $N \cdot k[S_n]$. In each of these cases, $r \maps A \otimes k[S_n] \to A$ denotes the right action, and $l \maps k[S_n] \otimes x^{\otimes n} \to x^{\otimes n}$ denotes the left action. 

For the first stage, where $A = k[S_n]$, the diagram 
\[
\begin{tikzcd}
    k[S_n] \otimes k[S_n] \otimes x^{\otimes n} 
        \arrow[r, shift left = 0.3em, "r \otimes 1"]
        \arrow[r, swap, shift right = 0.3em, "1 \otimes l"]
        &
        k[S_n] \otimes x^{\otimes n} \arrow[r, "l"] & x^{\otimes n}
\end{tikzcd}
\] 
is a coequalizer, in fact a split coequalizer, where the splitting is given by maps $x^{\otimes n} \to k[S_n] \otimes x^{\otimes n}$ and $k[S_n] \otimes x^{\otimes n} \to k[S_n] \otimes k[S_n] \otimes x^{\otimes n}$. The first such map is 
\[
\begin{tikzcd}
    x^{\otimes n} \cong I \otimes x^{\otimes n} \arrow[r, "u \otimes 1"] & k[S_n] \otimes x^{\otimes n}
\end{tikzcd} 
\] 
where the isomorphism is the left unitor and $u \maps I \to k[S_n]$ names the unit element of $k[S_n]$. The second map of the splitting is defined similarly, using a map of type $u \otimes 1 \otimes 1$.  Since a split coequalizer is an absolute colimit, this coequalizer exists in $\R$.

For the second stage, let $N \in \mathbb{N}$ and apply the functor $R \mapsto N \cdot R$ to the split coequalizer from the first stage.  The result is a split coequalizer naturally isomorphic to one of this form:
\[
\begin{tikzcd}
    (N \cdot k[S_n]) \otimes k[S_n] \otimes x^{\otimes n} 
        \arrow[r, shift left = 0.3em, "r \otimes 1"]
        \arrow[r, swap, shift right = 0.3em, "1 \otimes l"]
        &
        (N \cdot k[S_n]) \otimes x^{\otimes n} \arrow[r] & N \cdot x^{\otimes n}.
\end{tikzcd}
\]
This completes the second stage. 

For the third stage, by Maschke's theorem every $k[S_n]$-module $A$ is the result of splitting an idempotent $e \maps N \cdot k[S_n] \to N \cdot k[S_n]$. From the second stage, there is a morphism of coequalizer diagrams 
\[
\begin{tikzcd}
    (N \cdot k[S_n]) \otimes k[S_n] \otimes x^{\otimes n} 
        \arrow[r, shift left = 0.3em, "r \otimes 1"] \arrow[d, "e \otimes 1 \otimes 1"]
        \arrow[r, swap, shift right = 0.3em, "1 \otimes l"]
        &
        (N \cdot k[S_n]) \otimes x^{\otimes n} \arrow[r] \arrow[d, "e \otimes 1"] & N \cdot x^{\otimes n} \arrow[d, "\tilde{e}"] \\
    (N \cdot k[S_n]) \otimes k[S_n] \otimes x^{\otimes n} 
        \arrow[r, shift left = 0.3em, "r \otimes 1"] 
        \arrow[r, swap, shift right = 0.3em, "1 \otimes l"]
        &
        (N \cdot k[S_n]) \otimes x^{\otimes n} \arrow[r] 
        & N \cdot x^{\otimes n}    
\end{tikzcd}
\] 
for some idempotent $\tilde{e}$. View this morphism of diagrams as a coequalizer diagram in the category of functors $\mathsf{R}^{\I}$ from the `generic idempotent' $\I$ (one object, two morphisms, both idempotent). Since splitting of idempotents, as a functor $\mathrm{colim} \maps \mathsf{R}^I \to \mathsf{R}$, is left adjoint to the evident diagonal functor $\mathsf{R} \to \mathsf{R}^{\I}$, it maps the above coequalizer diagram in $\mathsf{R}^{\I}$ to a coequalizer diagram in $\mathsf{R}$, namely
\[
\begin{tikzcd}
    A \otimes k[S_n] \otimes x^{\otimes n} 
       \arrow[r, shift left = 0.3em, "r \otimes 1"] \arrow[r, swap, shift right = 0.3em, "1 \otimes l"] & 
    A \otimes x^{\otimes n} \arrow[r] & 
    A \otimes_{k[S_n]} x^{\otimes n}.  
\end{tikzcd} \qedhere
\]
\end{proof} 

\begin{prop} 
For any polynomial species and any $\rho \maps \S \to \Fin\Vect$ and any 2-rig $\R$ there is a functor $F_{\rho,\R} \maps \R \to \R$ given as follows:
    \[   
        F_{\rho, \R}(x) = \bigoplus_n \rho(n) \otimes_{k[S_n]} x^{\otimes n}  
    \]
for any object $x \in \R$, and likewise
on morphisms.
\end{prop}

Next suppose that we have a symmetric monoidal linear functor $G \maps \R \to \R'$ between 2-rigs. We can think of $G$ as a `change of base category'. We now show that the functors $F_{\rho,R}$ are natural, or more precisely pseudonatural, with respect to change of base. 

\begin{prop} 
\label{prop:Schurnatural}
    For a polynomial species $\rho$ and a symmetric monoidal linear functor $G \colon \R \to \R'$, there is a natural isomorphism $\phi_G \maps G \circ F_{\rho, \R} \xrightarrow{\sim} F_{\rho, \R'} \circ G$.
\end{prop}

\begin{proof}
    By definition of symmetric monoidal functor, $G$ preserves tensor products up to specified coherent isomorphisms, and $G$ will automatically preserve both direct sums (by linearity), and split coequalizers (as all functors do). In other words, for any polynomial functor $\rho$ we have natural isomorphisms
    \[   
    \begin{array}{ccl}
        F_{\rho, \R}(Gx) &=& \displaystyle{ \bigoplus_n \rho(n) \otimes_{k[S_n]} (Gx)^{\otimes n} } \\ \\
        &\cong& \displaystyle{ \bigoplus_n \rho(n) \otimes_{k[S_n]} G(x^{\otimes n}) } \\ \\
        &\cong& \displaystyle{ \bigoplus_n G(\rho(n) \otimes_{k[S_n]} x^{\otimes n}) } \\ \\
        &\cong& \displaystyle{ G \big( \bigoplus_n
        \rho(n) \otimes_{k[S_n]} x^{\otimes n}\big) }
    \end{array}
    \]
where we use the fact from the proof of \cref{lem:group_algebra} that all colimit constructions in view are absolute colimits.
\end{proof}

\subsubsection{Definition of abstract Schur functors}
\label{sec:abstract_schur_functors}

We have seen how to define an endofunctor 
$F_{\rho, \R} \maps \R \to \R$  for any polynomial species $\rho$ and any 2-rig $\R$, and we have seen that if $G \maps \R \to \R'$ is a symmetric monoidal linear functor between 2-rigs, the diagram 
\[
\begin{tikzcd}
    \R
    \arrow[r, "G"]
    \arrow[d, swap, "F_{\rho, \R}"]
    &
    \R'
    \arrow[d, "F_{\rho, \R'}"]
    \\
    \R
    \arrow[r, swap, "G"]
    &
    \R'
\end{tikzcd}
\] 
commutes up to a natural isomorphism $\phi_G \maps F_{\rho, \R} \circ G \xrightarrow{\sim} G \circ F_{\rho, \R'}$.   

It may be wondered what role is played by these natural isomorphisms $\phi_G$.  As we shall see, they express the fact that the endofunctors $F_{\rho, \R}$ depend pseudonaturally on $\R$.  The skeptic might reply: this is pleasant to observe---but surely it just some piddling abstract nonsense in the larger story of Schur functors, which are, after all, deeply studied and incredibly rich classical constructions?   

Let us put the question another way: among endofunctors on 2-rigs that depend pseudonaturally on the 2-rig, what is special about the endofunctors $F_{\rho,\R}$ arising from representations of the symmetric group? What extra properties pick out exactly \emph{these} functors?  

The perhaps surprising answer is: no extra properties! The functors $F_{\rho,\R}$ are \emph{precisely} those functors that are defined on all 2-rigs and that are pseudonatural with respect to maps between 2-rigs.  

We now make this precise. These functors $F_{\rho,\R}$ are defined on certain symmetric monoidal linear categories, but they respect neither the symmetric monoidal structure nor the linear structure. So, we have to forget some of the structure of the objects on which these functors are defined. This focuses our attention on the `forgetful' 2-functor
\[
    U \maps \TRig \to \CCat
\] 
where:
\begin{defn}
\label{defn:2-rig}
    Let $\TRig$ denote the 2-category with
    \begin{itemize}
        \item symmetric monoidal Cauchy complete linear categories as objects, 
        \item symmetric monoidal linear functors as morphisms, 
        \item symmetric monoidal linear natural transformations as 2-morphisms.
    \end{itemize}
\end{defn}
We now propose our conceptual definition of the category of Schur functors:

\begin{defn}
    A \define{Schur functor} is a pseudonatural transformation $S \maps U \To U$, where $U \maps \TRig \to \CCat$ is the forgetful 2-functor. A \define{morphism} of Schur functors is a modification between such pseudonatural transformations. Let $\Schur = [U, U]$ be the category with Schur functors as objects and modifications between these as morphisms.
\end{defn}

What this proposed definition makes manifestly obvious is that \emph{Schur functors are closed under composition}. This will provide a satisfying conceptual explanation of `plethysm'.

\section{Equivalence with polynomial species}
\label{sec:Poly}

Our first main result, \cref{thm:schur_versus_poly}, will be that $\Schur$ is equivalent to $\Poly$. But before launching into the proof, it is worth pondering an easier problem where we replace categories by sets and 2-rigs by rings. So, let $\Ring$ be the category of rings (commutative, as always). There is a forgetful functor 
\[ 
    U \maps \Ring \to \Set.
\]
What are the natural transformations from this functor to itself? Any polynomial $P \in \Z[x]$ defines such a natural transformation, since for any ring $R$ there is a function $P_R \maps U(R) \to U(R)$ given by
\[ 
    P_R \maps x \mapsto P(x) 
\]
and this is clearly natural in $R$. But in fact, the set of natural transformations from this functor turns out to be \emph{precisely} $\Z[x]$. And the reason is that $\Z[x]$ is the free ring on one generator!  

To see this, note that the forgetful functor
\[ 
    U \maps \Ring \to \Set  
\]
has a left adjoint, the `free ring' functor
\[ 
    F \maps \Set \to \Ring  . 
\] 
The free ring on a 1-element set is
\[ 
    F(1) \cong \Z[x] 
\]
and homomorphisms from $F(1)$ to any commutative ring $R$ are in one-to-one correspondence with elements of the underlying set of $R$, since
\[ 
    U(R) \cong \Set(1, U(R)) \cong \Ring(F(1), R). 
\]
So, we say $F(1)$ represents the functor $U$. This makes it easy to show that the set of natural transformations from $U$ to itself, denoted $[U, U]$, is isomorphic to the underlying set of $\Z[x]$, namely $U(F(1))$:
\[
     U(F(1)) \cong \Ring(F(1), F(1)) \cong [\Ring(F(1), -), \Ring(F(1), -)] \cong [U, U]. 
\] 
In the first step here we use the representability $U \cong \Ring(F(1), -)$, in the second we use the Yoneda lemma, and in the third we use the representability again.

We shall carry out a categorified version of this argument to prove that $\Schur$ is equivalent to $\Poly$. The key will be showing that just as $\Z[x]$ is the free ring on one generator, $\Poly$ is the free 2-rig on one generator. 

\begin{thm}
\label{thm:schur_versus_poly}
    There is an equivalence of categories $\Poly \to \Schur$ which sends any polynomial species $\rho \maps \S \to \Fin\Vect$ to the Schur functor $F_\rho$ defined by the formula 
    \[   
        F_{\rho, \R}(x) = \bigoplus_n \rho(n) \otimes_{k[S_n]} x^{\otimes n}  
    \]
    for any 2-rig $\R$ and any object $x \in \R$. 
\end{thm}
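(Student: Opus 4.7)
The strategy is to run a categorified version of the Yoneda argument sketched just above, parallel to the identification $[U,U] \cong \Z[x]$ in the case of rings. The two key ingredients are (a) the forthcoming \cref{thm:poly_versus_kS}, which identifies $\Poly$ with the underlying category of the free 2-rig $\ksbar$ on one generator $z$, and (b) \cref{thm:representing_U}, which says that $\ksbar$ represents the forgetful 2-functor, giving a pseudonatural equivalence $U \simeq \TRig(\ksbar, -)$. Once these are in hand, the equivalence $\Poly \simeq \Schur$ will come for free from the bicategorical Yoneda lemma, and the only concrete computation left is to match the coend formula with the definition of $F_\rho$ from \cref{sec:constructing}.

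\textbf{Main argument.} By the bicategorical Yoneda lemma applied to the 2-functor $U \maps \TRig \to \CCat$, evaluation at the identity $1_{\ksbar}$ yields an equivalence $[\TRig(\ksbar, -),\, U] \xrightarrow{\sim} U(\ksbar)$. Precomposing with the representability equivalence $U \simeq \TRig(\ksbar, -)$ and postcomposing with the equivalence $U(\ksbar) \simeq \Poly$ gives a chain
\[
    \Schur \;=\; [U, U] \;\simeq\; [\TRig(\ksbar, -), U] \;\simeq\; U(\ksbar) \;\simeq\; \Poly,
\]
which produces the desired equivalence of categories. Tracing the inverse direction, an object $\rho \in \Poly \simeq U(\ksbar)$ is sent to the pseudonatural transformation whose component at a 2-rig $\R$ is the functor $G \mapsto U(G)(\rho)$ on $\TRig(\ksbar, \R)$. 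Under the representability identification, an object $x \in \R$ corresponds to the essentially unique 2-rig morphism $G_x \maps \ksbar \to \R$ with $G_x(z) \cong x$, so the associated Schur functor acts by $F_\rho(x) \cong G_x(\rho)$.

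\textbf{Matching the explicit formula.} To recover the stated expression, I would use the coend presentation of an object of $\ksbar$, namely $\rho \cong \bigoplus_n \rho(n) \otimes_{k[S_n]} z^{\otimes n}$, valid because $\ksbar$ is built freely from $z$ as a 2-rig and the coend is an absolute colimit in the Cauchy-complete setting (it is realized as splittings of idempotents composed with finite direct sums, together with tensor powers). Since $G_x$ is symmetric monoidal and linear, and automatically preserves direct sums and splittings of idempotents, it commutes with this coend to give $F_\rho(x) \cong \bigoplus_n \rho(n) \otimes_{k[S_n]} x^{\otimes n}$, which agrees up to canonical isomorphism with the $F_\rho$ constructed in \cref{sec:constructing}. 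Functoriality in $x$, pseudonaturality in $\R$, and compatibility with morphisms of polynomial species follow from the corresponding naturality properties of the Yoneda equivalence.

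\textbf{Main obstacle.} Almost all the substantive work is hidden in \cref{thm:poly_versus_kS,thm:representing_U}; once those are established, the present theorem is essentially formal. The only genuine point of care is the coherence bookkeeping required to interpret the Yoneda lemma in the pseudo setting, ensuring that the equivalence $[\TRig(\ksbar, -), U] \simeq U(\ksbar)$ respects modifications as well as pseudonatural transformations, and that one can replace $U$ by the equivalent $\TRig(\ksbar, -)$ throughout without disturbing the resulting category. This is standard 2-categorical machinery but must be handled explicitly to certify the final equivalence.
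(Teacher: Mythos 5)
Your proof takes essentially the same route as the paper: Theorems \ref{thm:poly_versus_kS} and \ref{thm:representing_U} do the heavy lifting, the 2-categorical Yoneda lemma supplies the middle equivalence, and the coend presentation $\rho \cong \bigoplus_n \rho(n) \otimes_{k[S_n]} z^{\otimes n}$ (together with the fact that 2-rig morphisms preserve tensor powers, biproducts, and idempotent splittings) recovers the explicit formula, which the paper obtains by citing \cref{lem:U2}. The only cosmetic difference is that you collapse the chain $U(\ksbar) \simeq \TRig(\ksbar,\ksbar) \simeq [\TRig(\ksbar,-),\TRig(\ksbar,-)] \simeq [U,U]$ into the single Yoneda evaluation $[\TRig(\ksbar,-), U] \simeq U(\ksbar)$ composed with replacing $U$ by $\TRig(\ksbar,-)$ in the first slot, which is the same content arranged slightly differently.
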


In what follows we use $k \S$ to denote the `$k$-linearization' of the symmetric groupoid: that is, the linear category formed by replacing the homsets in $\S$ by the free $k$-vector spaces on those homsets. This means that for each object $n$ of $k \S$, we may speak of the representable functor $k \S(-, n) \maps k \S^{\op} \to \Vect$. We use $\overline{k \S}$ to denote the Cauchy completion of $k \S$ as a linear category. As we shall see, $\ksbar$ is the free 2-rig on one generator. To be careful, we use $U(\ksbar)$ to denote its underlying category. We construct the equivalence in \cref{thm:schur_versus_poly} in several steps:
\[ 
\begin{array}{cll}
    \Poly 
    &\simeq U(\ksbar) 
    &\text{\cref{thm:poly_versus_kS}}
    \\&\simeq \TRig(\ksbar, \ksbar) 
    &\text{\cref{thm:representing_U}}
    \\&\simeq [\TRig(\ksbar, -), \TRig(\ksbar, -)] 
    &\text{2-categorical Yoneda Lemma}
    \\&\simeq [U, U]= \Schur
    &\text{\cref{thm:representing_U}.}
\end{array}
\]
The equivalences here are equivalences of categories, but since $\ksbar$ is a 2-rig they give a way to make $\Poly$ and $\Schur$ into 2-rigs as well. We describe the resulting 2-rig structure on $\Poly$ in \cref{prop:poly_2rig}, and on $\Schur$ in \cref{prop:schur_2rig}.

In general, the Cauchy completion (or `Karoubi envelope') $\overline{\C}$ of a linear category $\C$ consists of the full subcategory of linear functors $\C\op \to \Vect$ that are retracts of finite direct sums of representables $\C(-, c) \maps \C\op \to \Vect$ \cite[Cor.\ 4.22]{LackTendas}. In the case $\C = k\S$ we can identify this Cauchy completion with $\Poly$ as follows:

\begin{thm}
\label{thm:poly_versus_kS}
    The functor $\Poly \to U(\ksbar)$ which sends a polynomial species $\rho \maps \S \to \Fin\Vect$ to its unique extension to a linear functor $k\S \to \Fin\Vect$ is an equivalence.
\end{thm}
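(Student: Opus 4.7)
The plan is to show the functor in question is fully faithful and essentially surjective. Well-definedness and full faithfulness come essentially for free from the universal property of the linearization $k\S$: a $k$-linear functor out of $k\S$ is determined uniquely by its underlying functor on $\S$, and the same is true of $k$-linear natural transformations. Since each $\rho(n)$ is finite-dimensional, the extension lands in $\Fin\Vect$; what actually requires work is showing that this extension lies in $\ksbar$, and that the functor surjects (up to isomorphism) onto $U(\ksbar)$.

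For essential surjectivity, recall that $\ksbar$ sits inside $[k\S\op, \Vect]$ as the full subcategory of retracts of finite direct sums of representables $k\S(-, n)$. Using the inversion isomorphism $\S \cong \S\op$, I would transport this description to $[k\S, \Vect]$, under which each representable $k\S(-, n)$ becomes the functor concentrated at the object $n$ taking value $k[S_n]$, the regular $S_n$-representation, which is manifestly a polynomial species. Any finite direct sum of such representables is likewise a polynomial species, and since $\Poly$ is Cauchy complete as a subcategory of $[k\S, \Vect]$---with biproducts and splittings of idempotents computed pointwise in $\Fin\Vect$---it is closed under the retracts that define $\ksbar$. This gives one inclusion $\ksbar \subseteq \Poly$.

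For the reverse inclusion, every polynomial species should arise as a retract of a finite direct sum of representables. By Maschke's theorem, each irreducible $S_n$-representation $V_\lambda$ is a direct summand of the regular representation $k[S_n]$, so the polynomial species concentrated at $n$ with value $V_\lambda$ is a retract of the representable at $n$. A general polynomial species has finite support, with each value decomposing into finitely many such irreducibles, so $\rho$ is a retract of a finite direct sum of representables, and hence lies in $\ksbar$. The main bookkeeping concern is carefully navigating the $\S \cong \S\op$ identification so that representables are correctly recognized as regular representations; once that is in hand, Maschke's theorem supplies the crucial splittings, and the rest is a routine unpacking of the definition of the Cauchy completion.
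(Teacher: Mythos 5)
Your proof is correct and follows essentially the same route as the paper's: identify $\ksbar$ with the full subcategory of $[k\S^{\op},\Vect]$ consisting of retracts of finite sums of representables, observe that representables correspond to the regular representations $k[S_n]$, and use Maschke's theorem to see that every polynomial species (having finite support and decomposing into finitely many irreducibles, each a summand of $k[S_n]$) is such a retract. The paper's argument is more compressed---it states the identification and the Maschke step in one pass rather than as two explicit inclusions---but the content is the same, and your more explicit treatment of both directions and of full faithfulness is a reasonable unpacking rather than a different approach.
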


\begin{proof}
    Since $\S$ is a groupoid we can identify a polynomial species $\rho \maps \S \to \Fin\Vect$ with a functor $\S\op \to \Fin\Vect$, and this in turn extends uniquely to a linear functor $k\S\op \to \Fin\Vect$. Every polynomial species $\rho$ is a finite coproduct $\bigoplus_{j = 0}^n \rho(j)$ where $\rho(j) \maps \S \to \Fin\Vect$ vanishes on all objects $i \ne j$. By Maschke's theorem, each representation of $S_j$ is the retract of a finite sum of copies of the group algebra $k[S_j]$, which corresponds to the representable $k\S(-, j)$. Thus, the polynomial species correspond precisely to the linear functors $k\S\op \to \Fin\Vect$ that are in the Cauchy completion $\ksbar$.
\end{proof}

Next we prove that $\ksbar$ is the free 2-rig on one generator.  For previous results of a similar flavor, see \cite[Ex.\ 1.26]{MD82} and \cite[Prop. 3.3]{DavydovMolev}.  We use this fact to show that $\ksbar$ represents the forgetful 2-functor from 2-rigs to categories

\begin{thm}
\label{thm:representing_U}
    The forgetful 2-functor 
    \[
        U \maps \TRig \to \CCat
    \] 
    has a left 2-adjoint $F \maps \CCat \to \TRig$, and $U$ is represented by $F(1) = \overline{k \S}$. In other words:
    \[   
        \TRig(\overline{k \S}, -) \simeq U(-)
    \] 
    This equivalence sends any morphism of 2-rigs $\phi \maps \ksbar \to \R$ to the object $\phi(1)$ in $U(\R)$.
\end{thm}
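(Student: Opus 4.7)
The plan is to construct the left 2-adjoint $F \maps \CCat \to \TRig$ by composing three free constructions, one for each piece of structure possessed by a 2-rig: the symmetric monoidal structure, the linear structure, and Cauchy completeness. Then I specialize to $\C = 1$ and extract the representability.

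First, I would build the free symmetric monoidal category $\mathsf{Sym}(\C)$ on a category $\C$. Its objects are finite tuples $(c_1, \ldots, c_n)$ of objects of $\C$, its morphisms are generated by the morphisms of $\C$ acting in each slot together with symmetry isomorphisms, and tensor product is concatenation. The universal property gives a 2-natural equivalence between symmetric monoidal functors $\mathsf{Sym}(\C) \to \M$ and ordinary functors $\C \to \M$, with 2-cells matching accordingly. Taking $\C = 1$ recovers the symmetric groupoid $\S$ on the nose.

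Second, I would apply the free linear category construction $\L \mapsto k\L$, which replaces each homset by the free $k$-vector space on it. The tensor product of $\mathsf{Sym}(\C)$, defined on objects and morphisms, extends bilinearly to $k\mathsf{Sym}(\C)$, yielding a symmetric monoidal linear category. Its universal property states that linear functors $k\L \to \R$ correspond to ordinary functors $\L \to \R$ when $\R$ is linear, again 2-naturally and compatibly with symmetric monoidal structure.

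Third, I would take the Cauchy completion $\overline{k\mathsf{Sym}(\C)}$. Since biproducts and splittings of idempotents are absolute colimits for linear categories (as noted in the excerpt), the tensor product of $k\mathsf{Sym}(\C)$ — being linear in each argument — extends uniquely to the Cauchy completion, making it a 2-rig. The universal property of Cauchy completion then says that symmetric monoidal linear functors $\overline{k\mathsf{Sym}(\C)} \to \R$ into a 2-rig $\R$ correspond to symmetric monoidal linear functors $k\mathsf{Sym}(\C) \to \R$. Chaining the three equivalences yields
\[
    \TRig\bigl(\overline{k\mathsf{Sym}(\C)}, \R\bigr) \simeq \CCat(\C, U(\R)),
\]
pseudonatural in $\C$ and $\R$, so $F(\C) := \overline{k\mathsf{Sym}(\C)}$ is the left 2-adjoint. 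Specializing to $\C = 1$ gives $F(1) = \overline{k\S} = \ksbar$, and hence $\TRig(\ksbar, \R) \simeq \CCat(1, U(\R)) \simeq U(\R)$. Tracing a morphism $\phi \maps \ksbar \to \R$ back through the three adjunctions: the Cauchy completion step restricts $\phi$ to $k\S \to \R$, the linearization step restricts to a functor $\S \to \R$, and the $\mathsf{Sym}$ step picks out the image of the unique object of $1$, namely the image of the one-element set $1 \in \S$, which is $\phi(1)$.

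The main obstacle I anticipate is not any one step but the 2-categorical bookkeeping: checking that each of the three universal properties is genuinely a 2-adjunction, so that symmetric monoidal linear natural transformations also correspond along each equivalence. In particular, the most delicate piece is verifying that the Cauchy completion of a symmetric monoidal linear category carries a canonical symmetric monoidal structure with the stated 2-universal property into all 2-rigs; this uses the absoluteness of biproducts and idempotent splittings, but demands a careful enriched-categorical argument to produce the required coherent isomorphisms and their 2-naturality.
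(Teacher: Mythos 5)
Your proof follows exactly the same strategy as the paper: factor $U$ as a composite $\TRig \to \SMLin \to \SMC \to \CCat$ of three forgetful 2-functors, exhibit a left 2-adjoint for each (free symmetric monoidal category, linearization $k(-)$, and linear Cauchy completion), and chain the resulting representability equivalences, specializing at $\C = 1$ to get $F(1) \simeq \ksbar$. You have also correctly located the delicate step — showing that Cauchy completion of a symmetric monoidal linear category inherits a symmetric monoidal structure with the right 2-universal property — which the paper handles by proving that $\overline{(-)}$ is a symmetric monoidal 2-functor for the tensor product $\boxtimes$ and hence carries symmetric pseudomonoids to symmetric pseudomonoids.
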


As a first step toward this, we write $U$ as a composite of three functors, each of which have left 2-adjoints:
\[
\begin{tikzcd}
    \CCat \phantom{\;}
    \arrow[r, bend left, "\SS"]
    \arrow[r, phantom, "\bot", pos = 0.6]
    &
    \SMC
    \arrow[l, bend left, "U_0"]
    \arrow[r, bend left, "k(-)"]
    \arrow[r, phantom, "\bot"]
    &
    \SMLin
    \arrow[l, bend left, "U_1"]
    \arrow[r, bend left, "\overline{(-)}"]
    \arrow[r, phantom, "\bot", pos = 0.4]
    &
    \TRig
    \arrow[l, bend left, "U_2", pos = 0.45]
\end{tikzcd}
\]
The 2-rig $\ksbar$ is obtained by applying the composite of these left adjoints to the terminal category $1$, so we say $\ksbar$ is the free 2-rig on one generator.  We construct these 2-adjunctions in the lemmas below. We shall need a number of 2-categories:

\begin{defn}
\label{defn:CauchLin}
Let $\SMC$ be the 2-category of 
\begin{itemize}
    \item symmetric monoidal categories, 
    \item (strong) symmetric monoidal functors, and 
    \item monoidal natural transformations.
\end{itemize}
Let $\Lin$ be the 2-category of
\begin{itemize}
    \item linear categories, 
    \item linear functors, and
    \item linear natural transformations.
\end{itemize}
Let $\SMLin$ be the 2-category of 
\begin{itemize}
    \item symmetric monoidal linear categories, 
     \item symmetric monoidal linear functors, and
    \item symmetric monoidal linear natural transformations.
\end{itemize}
Let $\Cauch\Lin$ be the 2-category of 
\begin{itemize}
    \item Cauchy complete linear categories, 
    \item linear functors, and 
    \item linear natural transformations. 
\end{itemize}
\end{defn}

\begin{lem}
\label{lem:freeSMC}
    The forgetful 2-functor
    \[ 
        U_0 \maps \SMC \to \CCat 
    \]
    has a left 2-adjoint
    \[  
        \SS \maps \CCat \to \SMC 
    \]
    such that $\SS(1)$ is equivalent as a symmetric monoidal category to $\S$. In particular, the functor $x \maps 1 \to U_0\C$ which picks out an object $x$ in a symmetric monoidal category $\C$ corresponds to the symmetric monoidal functor $x' \maps \S \to \C$ with $n \mapsto x^{\otimes n}$.
\end{lem}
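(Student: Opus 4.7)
The plan is to construct the left 2-adjoint $\S \maps \CCat \to \SMC$ explicitly as a permutation category construction, establish its universal property, and then compute $\S(1)$ directly.

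First, given a category $\C$, I define $\S(\C)$ to have as objects all finite tuples $(c_1, \ldots, c_n)$ of objects of $\C$, with $n \geq 0$ allowed. A morphism $(c_1, \ldots, c_n) \to (d_1, \ldots, d_m)$ is empty unless $n = m$, in which case it consists of a permutation $\sigma \in S_n$ together with a family of morphisms $f_i \maps c_i \to d_{\sigma(i)}$ in $\C$. Composition combines composition of permutations with composition in $\C$. The symmetric monoidal structure is given by concatenation of tuples, with unit the empty tuple, trivial associators and unitors, and symmetry given by the block-transposition permutations. All coherence conditions reduce to equations in the symmetric groups, so they follow from the group axioms.

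Second, I exhibit the unit $\eta_\C \maps \C \to U_0\S(\C)$ sending $c \mapsto (c)$, and verify the universal property. Given a symmetric monoidal category $\D$ and a functor $G \maps \C \to U_0 \D$, the extension $\tilde{G} \maps \S(\C) \to \D$ sends $(c_1, \ldots, c_n)$ to $G(c_1) \otimes \cdots \otimes G(c_n)$ (with any fixed parenthesization), and a morphism $(\sigma, (f_i))$ to the composite of $G(f_1) \otimes \cdots \otimes G(f_n)$ with the symmetry isomorphism in $\D$ implementing $\sigma$. The symmetric monoidal structure constraints on $\tilde{G}$ are built from the associators, unitors, and symmetries of $\D$, and coherence follows from Mac Lane's theorem for symmetric monoidal categories. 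For the 2-dimensional aspect, monoidal natural transformations $\tilde{G} \To \tilde{H}$ restrict along $\eta_\C$ to natural transformations $G \To H$, and conversely a natural transformation between restrictions extends uniquely using the tensor product structure.

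Third, to compute $\S(1)$ I specialize to the terminal category. The only object of $\S(1)$ of length $n$ is the tuple $(*, \ldots, *)$, which I label simply by $n \in \N$. A morphism $n \to n$ is a permutation in $S_n$ together with a family of identity maps, hence just an element of $S_n$. The concatenation tensor product corresponds to addition of natural numbers, and the symmetry to block transpositions. This is exactly the structure of $\S$ as a symmetric monoidal groupoid. The final claim of the lemma then follows because the value of $x'$ at the generator $1 \in \S$ must be $x$, and the symmetric monoidal structure forces $x'(n) \cong x^{\otimes n}$.

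The main obstacle is bookkeeping the 2-categorical coherence: while the object-level correspondence $\SMC(\S(\C), \D) \simeq \CCat(\C, U_0\D)$ is a routine enhancement of a classical 1-categorical result, upgrading this to a genuine 2-adjunction (rather than just an underlying 1-adjunction) requires that the equivalence be pseudonatural in $\D$, and this demands careful use of coherence to reshuffle parenthesizations and symmetries in $\D$ without ambiguity.
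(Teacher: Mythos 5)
Your proposal gives an explicit construction of $\S(\C)$ as the permutation category (finite sequences with morphisms built from permutations plus component maps), verifies the universal property by extending a functor $G \maps \C \to U_0\D$ to $\tilde{G}(c_1,\ldots,c_n) = G(c_1) \otimes \cdots \otimes G(c_n)$, and then specializes to $\C = 1$. The paper, by contrast, simply cites Blackwell--Kelly--Power's 2-dimensional monad theory and Fiore--Gambino--Hyland--Winskel's generalised species paper, where this free construction is established as part of a general framework. Your route is more elementary and self-contained; the paper's route dodges the coherence bookkeeping you flag at the end (which is genuinely the only nontrivial part) by outsourcing it to general 2-monad machinery.

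One clarification worth making explicit: with your strict construction of $\S(\C)$, the extension $\tilde{G}$ requires a choice of parenthesization and a choice of composite symmetry, so $\tilde{G}$ is determined only up to monoidal natural isomorphism. Consequently the restriction functor $\SMC(\S(\C), \D) \to \CCat(\C, U_0\D)$ is an \emph{equivalence} of categories, pseudonatural in $\C$ and $\D$, rather than a strict 2-natural isomorphism --- i.e.\ this construction a priori yields a biadjunction rather than a strict 2-adjunction. You gesture at this when you write that the equivalence must be ``pseudonatural in $\D$,'' and this is in fact consistent with the paper's usage: throughout, the authors invoke pseudonatural equivalences (as in the proof of \cref{thm:representing_U}), and the lemma itself is careful to say $\S(1)$ is only \emph{equivalent} to $\S$. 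So your proof is sound given that the adjunction is being read in the bicategorical sense, which is what the paper intends.
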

\begin{proof}
    See \cite{2-dimmonadtheory}, and also \cite[Sec.\ 4.1]{GeneralisedSpecies}. 
\end{proof}

\begin{lem}
\label{lem:U1}
    The forgetful 2-functor
    \[ 
        U_1 \maps \SMLin \to \SMC 
    \]
    has a left 2-adjoint
    \[ 
        k(-) \maps \SMC \to \SMLin. 
    \]
    In particular, the symmetric monoidal functor $x' \maps \S \to U_1\C$ such that $x'(n)$ is the object $x^{\otimes n}$ in a symmetric monoidal linear category $\C$ corresponds to the symmetric monoidal linear functor $x'' \maps k\S \to \C$ such that $n \mapsto x^{\otimes n}$.
\end{lem}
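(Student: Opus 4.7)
The plan is to exhibit $k(-)$ as the standard free $k$-linearization of a symmetric monoidal category and verify a 2-adjunction directly. First, for a symmetric monoidal category $\C$, I would define $k\C$ to have the same objects as $\C$, with hom-spaces $(k\C)(a,b)$ given by the free $k$-vector space on the hom-set $\C(a,b)$. Composition is defined by extending the composition of $\C$ bilinearly to a map $(k\C)(b,c) \otimes (k\C)(a,b) \to (k\C)(a,c)$, and the tensor product on $k\C$ is defined on objects as in $\C$ and on morphisms by extending $\otimes \maps \C(a,b) \times \C(c,d) \to \C(a \otimes c, b \otimes d)$ bilinearly to hom-spaces. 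The associator, unitors, and symmetry of $\C$ all lift to $k\C$ since they are defined pointwise by specific morphisms, and all the coherence axioms carry over because they are equations of morphisms that already hold in $\C$. The result is an object of $\SMLin$.

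Next I would make $k(-)$ into a 2-functor. A symmetric monoidal functor $F \maps \C \to \D$ induces $kF \maps k\C \to k\D$ that agrees with $F$ on objects and extends $F$ bilinearly on hom-spaces; the monoidal constraints of $F$ are reused verbatim, producing a 1-cell in $\SMLin$. A monoidal natural transformation between such functors has components that are morphisms in $\D$, hence basis elements of the corresponding hom-spaces of $k\D$, and the naturality and monoidality equations pass through the linear extension, producing a 2-cell in $\SMLin$. The unit of the prospective adjunction is $\eta_\C \maps \C \to U_1(k\C)$ which is the identity on objects and the inclusion of each hom-set as the canonical basis of the free $k$-vector space; this is strict symmetric monoidal.

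The core of the proof is the universal property. Given any symmetric monoidal linear category $\D$, I claim the functor
\[
    \SMLin(k\C, \D) \to \SMC(\C, U_1\D), \qquad H \mapsto U_1(H) \circ \eta_\C
\]
is an equivalence of categories, pseudonatural in $\C$ and $\D$. The inverse sends $F \maps \C \to U_1\D$ to its unique linear extension $\tilde F \maps k\C \to \D$, which agrees with $F$ on objects and is defined on morphisms by the universal property of the free vector space $k\C(a,b)$; the symmetric monoidal constraints of $\tilde F$ are taken to be those of $F$, and the coherence axioms transport across because they are linear relations on hom-spaces whose truth on the generating basis implies their truth everywhere. On 2-cells, a monoidal natural transformation $\alpha \maps \tilde F \To \tilde G$ is determined by the morphisms $\alpha_a$ in $\D$, and these are precisely the data of a monoidal natural transformation between $F$ and $G$. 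This gives a 2-adjunction $k(-) \dashv U_1$.

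The main obstacle is not any single difficult step but rather the bookkeeping: one must check that the induced structure on $k\C$ satisfies all the symmetric monoidal axioms, that the adjunction is genuinely 2-categorical (i.e.\ the bijection on 1-cells is a 2-natural equivalence, not merely a bijection on 1-cells), and that everything is strictly or pseudo-compatible as appropriate. All verifications are essentially formal once one observes that every relevant equation between morphisms in $k\C$ is a linear equation, which reduces checking it to checking on the basis elements, where it follows from the corresponding axiom in $\C$. Finally, the specific claim about the correspondence $x' \leftrightarrow x''$ is just the instantiation of the adjunction at $\C = \S$ and the functor $x' \maps \S \to U_1\C$ from \cref{lem:freeSMC}: its image under the adjunction is the unique symmetric monoidal linear functor $x'' \maps k\S \to \C$ extending $x'$, which therefore sends $n$ to $x^{\otimes n}$.
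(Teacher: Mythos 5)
Your proposal is correct, but it follows a more elementary, direct route than the paper. The paper first invokes (citing Borceux) the 2-adjunction $k(-) \dashv U$ between $\Lin$ and $\CCat$, then observes that $k(-)$ is change of base of enrichment along the strong symmetric monoidal functor $\Set \to \Vect$ sending a set to the free vector space on it; since such change of base is a strong symmetric monoidal 2-functor, it carries symmetric pseudomonoids in $\CCat$ (i.e.\ symmetric monoidal categories) to symmetric pseudomonoids in $\Lin$ (i.e.\ symmetric monoidal linear categories), and the underlying 2-adjunction lifts accordingly to one between $\SMC$ and $\SMLin$. You instead build $k\C$ by hand, check directly that the symmetric monoidal structure transfers, construct the strict unit $\eta_\C$, and verify the universal property by explicit linear extension, reducing all coherence equations to the basis of each hom-space. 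The paper's argument is more conceptual and immediately generalizes (the same reasoning gives the Cauchy-completion adjunction in \cref{lem:U2}); your argument is self-contained and makes explicit exactly what transfer of structure is happening, at the cost of the bookkeeping you acknowledge, particularly the point you flag but do not fully carry out, that the comparison $\SMLin(k\C,\D) \to \SMC(\C, U_1\D)$ is not just a bijection on $1$-cells but a natural isomorphism of hom-categories compatible with composition, which is what makes the adjunction a $2$-adjunction. Both are sound; they simply package the "it's all linear extension of basis data" observation at different levels of abstraction.
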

\begin{proof}
    The underlying 2-functor $\Lin \to \CCat$ has a left 2-adjoint $k(-) \maps \CCat \to \Lin$  \cite[Prop.\ 6.4.7]{BorceuxII}. Given any category $\C$, $k\C$ is the linear category with the same objects whose hom-spaces are the free vector spaces on the homsets of $\C$. The 2-functor $k(-)$ is also given by `change of base' along the strong symmetric monoidal functor sending any set to the free vector space on that set. It therefore sends symmetric pseudomonoids \cite{Monoidalbicatshopfalgebroids} in $\CCat$ to symmetric pseudomonoids in $\Lin$. In other words, it sends symmetric monoidal categories to symmetric monoidal linear categories. Therefore, the 2-adjunction $k(-) \dashv U_0$ between $\CCat$ and $\Lin$ lifts to one between $\SMC$ and $\SMLin$:  
    \[
    \begin{tikzcd}
        \SMC
        \arrow[r, bend left, "k(-)"]
        \arrow[r, phantom, "\bot"]
        &
        \SMLin
        \arrow[l, bend left, "U_1"]
    \end{tikzcd}
    \qedhere
    \]
\end{proof}

\begin{lem}
\label{lem:U2}
    The forgetful 2-functor
    \[ U_2 \maps \TRig \to \SMLin 
    \]
    has a left 2-adjoint
    \[  \overline{(-)} \maps \SMLin \to \TRig \]
    making $\TRig$ into a reflective sub-2-category of
    $\SMLin$. In particular, for a 2-rig $\C$, a symmetric monoidal linear functor $x'' \maps k\S \to U_2\C$ that picks out an object $x$ in the underlying symmetric monoidal linear category of $\C$ corresponds to a unique symmetric monoidal linear functor $\overline x \maps \ksbar \to \C$ such that $\rho \mapsto \bigoplus_n \rho(n) \otimes_{k[S_n]} x^{\otimes n}$.
\end{lem}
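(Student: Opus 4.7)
The plan is to lift the Cauchy completion 2-adjunction from the purely linear setting to the symmetric monoidal linear setting, in close analogy with the proof of \cref{lem:U1}.

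First, invoke the fact (see \cite{BorceuxDejean}) that the forgetful 2-functor $\Cauch\Lin \to \Lin$ has a left 2-adjoint, the Cauchy completion $\overline{(-)}$. Concretely, $\overline{\C}$ may be presented as the category whose objects are pairs $(c, e)$ with $e \maps c \to c$ an idempotent in $\C$, and whose morphisms $(c, e) \to (c', e')$ are morphisms $f \maps c \to c'$ with $e' f = f = f e$; equivalently, $\overline{\C}$ is the full subcategory of the linear presheaf category $[\C\op, \Vect]$ on the retracts of finite direct sums of representables. The unit of the adjunction is the evident inclusion $\C \hookrightarrow \overline{\C}$, and the universal property says that for any Cauchy complete linear $\R$, restriction along the unit is an equivalence $\Cauch\Lin(\overline{\C}, \R) \simeq \Lin(\C, \R)$, since any linear functor $f \maps \C \to \R$ extends to $\overline{\C}$ by sending $(c, e)$ to the splitting of the idempotent $f(e)$ in $\R$ (which exists uniquely up to unique iso because $\R$ is Cauchy complete).

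Second, lift this 2-adjunction to the symmetric monoidal setting. The key observation is that finite direct sums and splittings of idempotents are \emph{absolute} colimits for $\Vect$-enriched categories, so they are preserved by every linear functor. In particular, for any symmetric monoidal linear $\C$, the bilinear tensor product $\otimes \maps \C \times \C \to \C$ extends uniquely (up to unique symmetric monoidal linear iso) to a bilinear tensor product on $\overline{\C}$, given on objects by $(c, e) \otimes (c', e') = (c \otimes c', e \otimes e')$; the idempotent $e \otimes e'$ exists because tensoring in each argument is linear. The associator, unitor, and symmetry extend automatically from $\C$. Equivalently, $\otimes$ exhibits $\C$ as a symmetric pseudomonoid in $\Lin$ (cf.\ \cite{Monoidalbicatshopfalgebroids}), and the 2-functor $\overline{(-)} \maps \Lin \to \Cauch\Lin$, being a left 2-adjoint between cartesian 2-categories that preserves the 2-product up to equivalence, sends symmetric pseudomonoids to symmetric pseudomonoids. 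Hence $\overline{(-)}$ lifts to a 2-functor $\SMLin \to \TRig$, and the universal property lifts likewise: a symmetric monoidal linear functor $\overline{\C} \to \R$ restricts to a symmetric monoidal linear functor $\C \to \R$, and the extension procedure of the previous paragraph preserves the symmetric monoidal structure because each splitting $\bar f(c, e)$ is naturally identified with a tensor of splittings on tensor products.

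Third, specialize to $\C = k\S$. By \cref{lem:U1}, giving a symmetric monoidal linear functor $x'' \maps k\S \to \R$ is equivalent to choosing an object $x \in U(\R)$, with $x''(n) = x^{\otimes n}$ and $x''$ sending the basis element $\sigma \in k\S(n, n) = k[S_n]$ to the symmetry $\sigma \maps x^{\otimes n} \to x^{\otimes n}$. By the universal property just established, $x''$ extends uniquely to a symmetric monoidal linear functor $\overline{x} \maps \ksbar \to \R$. To derive the explicit formula, use \cref{thm:poly_versus_kS} to identify an object of $\ksbar$ with a polynomial species $\rho$, together with the presentation of $\rho$ as the splitting of an idempotent $e_\rho$ on a finite direct sum $\bigoplus_n k\S(-, n)^{m_n}$ of representables; since $\overline{x}$ preserves direct sums and splittings, and sends $k\S(-, n)$ to $x^{\otimes n}$, one identifies $\overline{x}(\rho)$ with $\bigoplus_n \rho(n) \otimes_{k[S_n]} x^{\otimes n}$ via the coequalizer description at the end of \cref{sec:constructing}.

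The main obstacle is the second step: verifying rigorously that the Cauchy completion 2-adjunction lifts through the symmetric monoidal structure, i.e.\ that the tensor product on $\overline{\C}$ is well-defined and coherent, and that the unit $\C \hookrightarrow \overline{\C}$ is symmetric monoidal. The cleanest route is to exploit absoluteness of the colimits in question and invoke the general transfer of 2-adjunctions to pseudomonoid structures, rather than checking coherence hexagons by hand.
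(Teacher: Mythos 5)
Your overall strategy matches the paper's: lift the Cauchy completion 2-adjunction $\Lin \rightleftarrows \Cauch\Lin$ through the symmetric monoidal structure, then specialize to $k\S$ and verify the formula on representables. However, the crucial technical step---why the left 2-adjoint $\overline{(-)}$ sends symmetric pseudomonoids to symmetric pseudomonoids---is asserted with incorrect justification.

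There are two issues. First, a minor one: your description of $\overline{\C}$ as pairs $(c, e)$ with $e$ idempotent is only the Karoubi envelope. The $\Vect$-enriched Cauchy completion used here (and spelled out in the paper) also closes under finite direct sums; objects are retracts of \emph{finite direct sums} of representables, not merely retracts of single representables.

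Second, and more importantly: you justify the lift by saying that $\overline{(-)}$ is ``a left 2-adjoint between cartesian 2-categories that preserves the 2-product up to equivalence,'' and therefore sends symmetric pseudomonoids to symmetric pseudomonoids. This misidentifies the relevant monoidal structure. A symmetric monoidal linear category with a \emph{bilinear} tensor (as required in \cref{def:2rig}) is a symmetric pseudomonoid in $(\Lin, \otimes)$ where $\otimes$ is the tensor product of $\Vect$-enriched categories---not the cartesian 2-product. Preservation of the cartesian product does not give preservation of $\otimes$, and left 2-adjointness by itself gives neither. The paper's proof therefore has to establish the key comparison $\overline{A} \boxtimes \overline{B} \simeq \overline{A \otimes B}$ (where $\boxtimes$ is the Day-style tensor $\overline{U(-) \otimes U(-)}$ on $\Cauch\Lin$). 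It does this by showing that $U\overline{A} \otimes U\overline{B}$ and $A \otimes B$ have equivalent enriched presheaf categories, using the general fact that an enriched category and its Cauchy completion have equivalent presheaf categories. That chain of equivalences is the substance you skip, and ``absoluteness'' alone does not substitute for it: absoluteness explains why bilinearity relations hold automatically in $\boxtimes$, but not why $\overline{(-)}$ is a strong symmetric monoidal 2-functor $(\Lin,\otimes) \to (\Cauch\Lin,\boxtimes)$. Your final paragraph acknowledges that this step is the main obstacle, but the argument you propose to fill it---invoking preservation of 2-products---does not apply.
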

\begin{proof}
      Cauchy completion gives a 2-reflector $\overline{(-)} \maps \Lin \to \Cauch\Lin$ which is left 2-adjoint to the 2-embedding $i \maps \Cauch\Lin \to \Lin$, and we claim the 2-adjunction $\overline{(-)} \dashv i$ lifts to the level of symmetric monoidal structure to give a 2-adjunction 
    \[
    \begin{tikzcd}
        \SMLin
        \arrow[r, bend left, "\overline{(-)}"]
        \arrow[r, phantom, "\bot", pos = 0.45]
        &
        \TRig.
        \arrow[l, bend left, "U_2", pos = 0.45]
    \end{tikzcd}
    \]
    
    To prove this, we introduce a tensor product $\boxtimes$ appropriate to Cauchy complete linear categories $\C, \D$: it is simply the Cauchy completion of the tensor product of their underlying linear categories:
    \[   \C \boxtimes \D = \overline{U\C \otimes U\D}. \]
    Recall that the Cauchy completion of a linear category is the full subcategory of $\Vect$-valued presheaves on that category that are retracts of finite coproducts of representables \cite[Cor.\ 4.22]{LackTendas}.    Objects of $\C \boxtimes \D$ are thus retracts of finite coproducts of tensor products $\bigoplus_i c_i \otimes d_i$ where such sums are formally defined as coproducts of linear functors $\C(-, c_i) \otimes \D(-, d_i) \maps (U\C \otimes U\D)\op \to \Vect$. It is worth remarking that `bilinearity relations', e.g.
    \[ 
        (c \oplus c') \otimes d \cong (c \otimes d) \oplus (c' \otimes d), 
    \]
    do not need to be imposed, but are automatically built into the definition of $\C \boxtimes \D$, due to the absoluteness of coproducts as $\Vect$-enriched colimits.
    
    A useful fact is that for linear categories $\A, \B$, there is a canonical enriched functor  $\overline{\A} \boxtimes \overline{\B} \simeq \overline{\A \otimes \B}$. This amounts to asserting a canonical equivalence 
    \[ 
        \overline{U\overline{\A} \otimes U\overline{\B}} \simeq \overline{\A \otimes \B}.
    \]
    To prove this, it suffices to show that $U\overline{\A} \otimes U\overline{\B}$ and $\A \otimes \B$ have equivalent enriched presheaf categories \cite[Proposition 5.28]{Kelly}. But 
    \begin{align*}
        [(U\overline{\A} \otimes U\overline{\B})\op, \Vect]
        &\simeq [U\overline{\A}\op \otimes U\overline{\B}\op, \Vect]
        \\&\simeq [U\overline{\A}\op, [U\overline{\B}\op, \Vect]]
        \\&\simeq [U\overline{\A}\op, [\B\op, \Vect]]
        \\&\simeq [\A\op, [\B\op, \Vect]]
        \\&\simeq [(\A \otimes \B)\op, \Vect]
    \end{align*}
    because the enriched presheaf category of any enriched category is equivalent to that of its Cauchy completion \cite{Kelly, AbsoluteColimEnriched}. Thus, Cauchy completion becomes a strong monoidal 2-functor $\overline{(-)} \maps (\Lin, \otimes) \to (\Cauch\Lin, \boxtimes)$. Even better, this 2-functor is symmetric monoidal, so it sends symmetric pseudomonoids to symmetric pseudomonoids. In other words, it sends symmetric monoidal linear categories to symmetric monoidal Cauchy complete linear categories.
        
    To check that the formula given for the extension $\overline x \maps \ksbar \to \C$ of $x'' \maps k\S \to \C$ to the Cauchy completion is correct, we check that it matches $x''$ when we apply it to representables. On a representable, we have
    \begin{align*}
        k\S(m, -) 
        & \mapsto \bigoplus_n k\S(m, n) \otimes_{k[S_n]} x^{\otimes n}
        \\&\cong k[S_m] \otimes_{k[S_m]} x^{\otimes m}
        \\&\cong x^{\otimes m} 
        \\&= x''(m). \qedhere  
    \end{align*}
\end{proof}

\begin{proof}[Proof of \cref{thm:representing_U}]
    The underlying 2-functor $U \maps \TRig \to \CCat$ is the composite 
    \[
        \TRig \xrightarrow{\; U_2\; } \SMLin \xrightarrow{\; U_1 \;} \SMC \xrightarrow{\; U_0 \;} \CCat
    \] 
    and thus by the above lemmas, we have pseudonatural equivalences 
    \begin{align*}
        \TRig( \ksbar, -) 
        & \cong \SMLin(k\S, U_2 -) \\
        & \cong \SMC(\S, U_1 U_2 -) \\
        & \cong U_0 U_1 U_2 \\
        & \cong U
    \end{align*}
    so that $\ksbar$ is the representing object for $U$. 
\end{proof}

\begin{proof}[Proof of \cref{thm:schur_versus_poly}]
    First we describe a chain of equivalences
     \[ 
        \Poly 
        \simeq U(\ksbar) 
        \simeq \TRig(\ksbar, \ksbar) 
        \simeq [\TRig(\ksbar -), \TRig(\ksbar, -)]  
        \simeq [U, U]= \Schur   
    \]
    and then we explicitly describe the Schur functor corresponding to a polynomial species. Let $\rho \in \Poly$ and let $\R$ be any 2-rig. 
    
    \begin{itemize}
        \item We have $\Poly \simeq U(\ksbar)$ by \cref{thm:poly_versus_kS}. Denote the counterpart of $\rho$ under this equivalence by $\widetilde \rho \in U(\ksbar)$. 
        \item     
        We have $U(\ksbar) \simeq \TRig(\ksbar, \ksbar)$ by \cref{thm:representing_U}. Thus the functor $\widetilde \rho \maps 1 \to U(\ksbar)$ admits a unique extension to a morphism of 2-rigs $\overline \rho \maps \ksbar \to \ksbar$.
        \item 
        We have $\TRig(\ksbar, \ksbar) \simeq [\TRig(\ksbar, -), \TRig(\ksbar, -) ]$ by the 2-categorical Yoneda lemma \cite[Ch.\ 8]{2DCats}. Thus  $\overline\rho$ gives a pseudonatural transformation 
        \[
            \overline \rho^* \maps \TRig(\ksbar, -) \To \TRig(\ksbar, -), 
        \]
        the $\R$ component of which, $\overline \rho_\R^*$, maps any morphism of 2-rigs $\phi \maps \ksbar \to \R$ to the composite $\phi \circ \overline \rho$.
        \item     
        \cref{thm:representing_U} gives an equivalence $\TRig(\ksbar, \R) \simeq U(\R)$ mapping $\phi \maps \ksbar \to \R$ to $\phi(1)$. The inverse of this sends $x \in \R$ to $\overline x \maps \ksbar \to \R$. Conjugating by this equivalence gives an equivalence
        \[
            [\TRig(\ksbar, -), \TRig(\ksbar, -) ] \simeq [U, U].
        \]
        and this maps $\overline{\rho}^\ast$ to $F_\rho \in [U, U]$.
    \end{itemize}
   
    We now calculate $F_\rho$ explicitly by seeing how $F_{\rho, R}$ acts on an object $x \in \R$. Passing $x$ through the equivalence $\R \simeq \TRig(\ksbar, \R)$, we obtain
    \[
        \overline x \maps \ksbar \to \R.
    \] 
    Acting on $\overline x$ with $\overline \rho^*$, we obtain
    \[
        \ksbar \xrightarrow{\overline \rho} \ksbar \xrightarrow{\overline x} \R
    \] 
    Turning this back into an object of $U(\R)$ by evaluating at the generator $\eta \maps 1 \to \ksbar$, we obtain
    \[
        1 \xrightarrow{\eta} \ksbar \xrightarrow{\overline \rho} \ksbar \xrightarrow{\overline x} \R, 
    \]
    which is really just
    \[
        1 \xrightarrow{\rho} \ksbar \xrightarrow{\overline x} \R.
    \]
    By the formula for $\overline{x}$ in \cref{lem:U2} we obtain
    \[ 
        \overline x (\rho) = \bigoplus_n \rho(n) \otimes_{k[S_n]} x^{\otimes n}
    \] 
    and by our calculation this is $F_{R, \rho}(x)$.
\end{proof}

A corollary is that by transport of structure across equivalences, the monoidal product on any of the categories 
\[
    \TRig(\ksbar, \ksbar) \simeq [\TRig(\ksbar, -), \TRig(\ksbar, -)] \simeq [U, U], 
\]
given in each case by endofunctor composition, induces a monoidal product on the equivalent categories $\Schur \simeq \Poly \simeq U(\ksbar)$, which we denote by the symbol $\comp$.  This monoidal product is called the \define{substitution product}, or \define{plethysm}.  For a good introduction to plethysm see Macdonald \cite[Appendix IA]{Macdonald}.  In \cref{sec:plethysm} we investigate it in detail and show that it makes the 2-rig $\ksbar$ into a `2-plethory'.  For now, we just state a formula for it:

\begin{cor}
\label{cor:pleth}
    For polynomial species $\rho, \tau \maps \S^{\op} \to \Vect$, the substitution product is given by the formula 
    \[
       \rho \comp \tau = \bigoplus_n \rho(n) \otimes_{k[S_n]} \tau^{\otimes n}
    \]
    and this defines a monoidal product on $\Poly$ whose monoidal unit is $\ksbar(-, 1) \maps \S^{\op} \to \Vect$. 
\end{cor}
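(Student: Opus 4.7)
The plan is to transport endofunctor composition from $\Schur = [U,U]$ back to $\Poly$ via the chain of equivalences assembled in \cref{thm:schur_versus_poly}, and then to evaluate explicitly using the formula for $F_\rho$ given there. The first step is to observe that a polynomial species can be recovered from its associated Schur functor by evaluating at the 2-rig $\ksbar$ on the generator: concretely, $F_{\rho,\ksbar}(\eta(1)) \cong \rho$. To verify this, I would use that $\eta(1)$ is the representable $k\S(-,1)$ in $\ksbar$, and that monoidality of the universal $\eta \maps \S \to \ksbar$ gives $\eta(1)^{\otimes n} \cong k\S(-,n)$. Substituting into the formula from \cref{thm:schur_versus_poly} yields
\[
F_{\rho,\ksbar}(\eta(1)) \;\cong\; \bigoplus_n \rho(n) \otimes_{k[S_n]} k\S(-,n),
\]
which, evaluated at $m$, collapses to $\rho(m)$ via the coend calculation $\rho(n) \otimes_{k[S_n]} k[S_n] \cong \rho(n)$ and the fact that $k\S(m,n) = 0$ for $m \neq n$.

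With this retraction formula in hand, the substitution product can be read off directly. Since $\rho \comp \tau$ is, by definition via transport of structure, the species corresponding to the endofunctor composite $F_\rho \circ F_\tau$, the above recipe gives
\[
\rho \comp \tau \;\cong\; (F_\rho \circ F_\tau)_{\ksbar}(\eta(1)) \;=\; F_{\rho,\ksbar}\bigl(F_{\tau,\ksbar}(\eta(1))\bigr) \;\cong\; F_{\rho,\ksbar}(\tau).
\]
One further application of \cref{thm:schur_versus_poly}, now at the 2-rig $\ksbar$ with object $\tau \in \ksbar$, rewrites this as the desired formula $\bigoplus_n \rho(n) \otimes_{k[S_n]} \tau^{\otimes n}$.

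For the monoidal unit, the identity Schur functor clearly plays that role on the $\Schur$ side, so the unit for $\comp$ must be the species corresponding to it. I would verify directly that $\S(-,1) = \eta(1)$ does the job: because $\eta(1)(n)$ is $k$ when $n=1$ and $0$ otherwise, the formula of \cref{thm:schur_versus_poly} yields $F_{\eta(1),\R}(x) \cong k \otimes_{k[S_1]} x \cong x$, so $F_{\eta(1)} \cong \mathrm{id}$ as a pseudonatural transformation of $U$.

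I do not expect a serious conceptual obstacle; the main task is really bookkeeping. The one point that requires mild care is confirming that the monoidal structure on $\Poly$ obtained by transport of structure along the chain $\Poly \simeq U(\ksbar) \simeq \TRig(\ksbar,\ksbar) \simeq [U,U]$ is well-defined and agrees under any of these presentations with endofunctor composition. This follows from the 2-naturality of each equivalence used, most crucially the 2-categorical Yoneda lemma, so that evaluation at the generator $\eta(1)$ genuinely computes the substitution product.
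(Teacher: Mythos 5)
Your proof is correct and follows essentially the same route as the paper's: both transport endofunctor composition back through the chain of equivalences and evaluate at $\R = \ksbar$ and the generator $\eta(1)$, landing on the formula from \cref{thm:schur_versus_poly}. The paper chases the composite $\overline{\tau}\circ\overline{\rho}$ directly without isolating the retraction formula $F_{\rho,\ksbar}(\eta(1))\cong\rho$ as a lemma, and leaves the unit claim implicit; your explicit verifications of both are sound and harmless additions.
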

\begin{proof}
    In the proof of \cref{thm:schur_versus_poly} where we calculate $F_\rho$, put $\R = \overline{k \S}$, and put $x = \tau$. The composite 
    \[ 
        \ksbar \xrightarrow{\overline \rho} \ksbar \xrightarrow{\overline \tau} \overline{k \S},
    \]
    which is the monoidal product $\overline{\tau} \circ \overline{\rho}$ in $\TRig(\overline{k \S}, \overline{k \S})$, is sent by the functor 
    \[
        \TRig(\overline{k \S}, \overline{k \S}) \to U(\overline{k \S})
    \]
    to the composite 
    \[
        \left(1 \xrightarrow{\eta} \ksbar \xrightarrow{\overline \rho} \ksbar \xrightarrow{\overline \tau} \overline{k \S}\right) = \left(1 \xrightarrow{\rho} \ksbar \xrightarrow{\overline \tau} \overline{k \S}\right)
    \]
    which names the object 
    \[
        \overline \tau (\rho) = \bigoplus_n \rho(n) \otimes_{k[S_n]} \tau^{\otimes n}, 
    \]
    again by \cref{lem:U2}. This completes the proof.
\end{proof}
The transport-of-structure method gives at once both the fact that plethysm on $U(\overline{k \S})$ defines a monoidal product, and that we have monoidal equivalences 
\[
    \left(U(\overline{k \S}), \comp \right) \simeq \left(\TRig(\overline{k \S}, \overline{k \S}), \circ\right) \simeq \left([U, U], \circ\right).
\]

We now use Theorems \ref{thm:schur_versus_poly} and \ref{thm:poly_versus_kS} to transport structure in the other direction, transferring the 2-rig structure on $\ksbar$ to $\Schur$ and $\Poly$. Their structure as linear categories is evident, so the real question is: what do the tensor products on these categories look like? We first answer this question for $\Poly$, and then for $\Schur$.

The category of polynomial species inherits a monoidal structure from $\S$ via Day convolution. In fact it has two, but here we consider the one arising from the \emph{additive} monoidal structure on $\S$, which is given on the level of objects by adding natural numbers, and on the morphism level by group homomorphisms 
\[
    S_i \times S_j \to S_{i+j}
\] 
that juxtapose permutations. These can be linearized to give algebra maps 
\[
    k[S_i] \otimes k[S_j] \to k[S_{i+j}]
\] 
which give the monoidal category structure of $k\S$. This monoidal structure uniquely extends via Day convolution to the Cauchy completion $\ksbar$, which is intermediate between $k\S$ and the category of $\Vect$-valued presheaves on $k\S$. The general formula for the Day convolution product applied to presheaves $\rho, \psi \maps \S\op \to \Vect$ is 
\[
    (\rho \ast \psi)(n) = \bigoplus_{i+j = n} (\rho(i) \otimes \psi(j)) \otimes_{k[S_i \times S_j]} k[S_n] 
\]
By restriction and the isomorphism $\S \cong \S^{\op}$ coming from the fact that $\S$ is a groupoid, this formula gives a tensor product on polynomial species, which Macdonald calls the `induction product' \cite[Appendix IA]{Macdonald}. 

This tensor product is a kind of categorification of the usual definition of product of ordinary polynomials, where given 
\[
    F(x) = \sum_{0 \leq i \leq M} \frac{f_i x^i}{i!} \qquad G(x) = \sum_{0 \leq j \leq N} \frac{g_j x^j}{j!}
\]
the $n$th Taylor coefficient of the product $F(x)G(x)$ is 
\[
    \sum_{i+j = n} \frac{n!}{i! j!} f_i g_j.
\] 

In summary: 
\begin{prop}
\label{prop:poly_2rig}
    The 2-rig $\ksbar$ is equivalent to $\Poly$ made into a 2-rig whose tensor product is given by Day convolution with respect the additive monoidal structure on $\S$.
\end{prop}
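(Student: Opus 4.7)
The plan is to combine the equivalence $\Poly \simeq U(\ksbar)$ from \cref{thm:poly_versus_kS} with the construction of the 2-rig structure on $\ksbar$ in \cref{lem:U2}, and then identify the inherited tensor product on $\Poly$ with Day convolution for the additive symmetric monoidal structure on $\S$.

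First, I would recall from the proof of \cref{lem:U2} that Cauchy completion is a strong symmetric monoidal 2-functor $\overline{(-)} \maps (\Lin, \otimes) \to (\Cauch\Lin, \boxtimes)$. Applied to $k\S$ equipped with the additive tensor product (juxtaposition of permutations, linearized), this yields a symmetric monoidal structure on $\ksbar$ for which the canonical inclusion $k\S \hookrightarrow \ksbar$ is strong symmetric monoidal. Second, I would factor this inclusion through the presheaf category $[k\S\op, \Vect]$, which carries its standard Day convolution $\ast$ extending the tensor product of $k\S$ along Yoneda. Since Day convolution is defined by a coend, it is cocontinuous in each argument, so in particular it preserves the absolute colimits (finite biproducts and splittings of idempotents) that build up $\ksbar$ from the representables. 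Hence $\ast$ restricts to a bifunctor $\ksbar \times \ksbar \to \ksbar$.

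Third, I would invoke a Cauchy-density uniqueness argument to identify this restriction with $\boxtimes$. Both bifunctors agree on representables with the tensor product of $k\S$ and both preserve biproducts and idempotent splittings in each argument, so any symmetric monoidal structure on $\ksbar$ extending that of $k\S$ through absolute colimits is unique up to a coherent symmetric monoidal isomorphism; this matches the universal property by which $\boxtimes$ was defined. Finally, to extract the explicit formula asserted in the proposition, I would compute the coend
\[
    (\rho \ast \psi)(n) \;=\; \int^{i,j} k\S(i+j, n) \otimes \rho(i) \otimes \psi(j),
\]
and observe that $k\S(i+j, n) = 0$ unless $i+j = n$, in which case it equals $k[S_n]$ regarded as a $(k[S_i] \otimes k[S_j], k[S_n])$-bimodule. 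The coend then collapses to the stated $\bigoplus_{i+j=n}\,(\rho(i)\otimes\psi(j)) \otimes_{k[S_i \times S_j]} k[S_n]$.

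The main obstacle will be the coherence step: it is nearly immediate that the two underlying bifunctors agree on representables, but to conclude that the associators, unitors, and symmetry on $\ksbar$ transported via Cauchy completion match those of Day convolution requires either using the 2-functoriality of $\overline{(-)}$ as a symmetric monoidal 2-functor (as established in \cref{lem:U2}) or invoking the universal property of Day convolution as the bicocontinuous symmetric monoidal extension of $\otimes$ on $k\S$. Either route characterizes the symmetric monoidal structure up to unique coherent isomorphism, giving the desired monoidal equivalence between $\ksbar$ and $\Poly$ with Day convolution.
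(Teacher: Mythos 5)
Your proposal is essentially the same argument the paper gives, but considerably more detailed: the paper's ``proof'' of this proposition is simply the informal discussion that precedes it (Day convolution extends the additive monoidal structure from $k\S$ to $\ksbar$, which sits between $k\S$ and the full presheaf category, and the explicit formula is stated without derivation). You have unpacked the step the paper leaves implicit, namely why the Day tensor on the presheaf category restricts to $\ksbar$ (preservation of absolute colimits) and why the resulting monoidal structure coincides with the one coming from \cref{lem:U2} (Cauchy-density plus the fact that the Cauchy completion 2-functor is strong symmetric monoidal), and you have spelled out the coend computation yielding the stated formula. All of this is sound.

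One small notational slip worth fixing: you twice write as though the target of the identification is ``$\boxtimes$.'' In the paper $\boxtimes$ is the \emph{external} tensor product of Cauchy complete linear categories, a bifunctor $\Cauch\Lin \times \Cauch\Lin \to \Cauch\Lin$, not a monoidal product internal to $\ksbar$. What you are identifying the restricted Day convolution with is the \emph{internal} tensor on $\ksbar$, i.e.\ the symmetric pseudomonoid multiplication $m \maps \ksbar \boxtimes \ksbar \to \ksbar$ obtained by applying the strong symmetric monoidal 2-functor $\overline{(-)}$ to the pseudomonoid $(k\S, +)$. Likewise ``the universal property by which $\boxtimes$ was defined'' should refer to the universal property of Cauchy completion as a symmetric monoidal 2-functor applied to the pseudomonoid $k\S$ (so that $\ksbar$ is characterized as a 2-rig receiving a strong symmetric monoidal map from $k\S$ and universal among Cauchy complete such), not to the universal property of the external product $\boxtimes$ of linear categories. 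With that wording corrected, the argument is the right one and supplies precisely the justification the paper elides.
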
 

Next we turn to the corresponding 2-rig structure on $\Schur$. 

\begin{prop}
    \label{prop:schur_2rig}
    The 2-rig $\ksbar$ is equivalent to $\Schur$ made into a 2-rig with the pointwise tensor product: given $F, G \in \Schur$ their tensor product $F \otimes G$ has
    \[    (F \otimes G)_\R(x) = F_\R(x) \otimes G_\R(x) \]
    for any 2-rig $\R$ and any object $x \in \R$. This formula also holds for morphisms in $\R$. 
\end{prop}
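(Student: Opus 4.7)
The plan is to prove the proposition in two stages: first, check that the pointwise formula $(F \otimes G)_\R(x) := F_\R(x) \otimes_\R G_\R(x)$ really does define a 2-rig structure on $\Schur$; and second, check that this structure coincides with the one transported to $\Schur$ from $\ksbar$ along the chain of equivalences built in the proof of \cref{thm:schur_versus_poly}. The second step is what actually pins down the claim, since the pointwise formula is visibly a natural candidate but still has to be matched with the canonical transported structure.

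For step one, I would show that $F \otimes G$ is a well-defined pseudonatural transformation $U \To U$. Given a morphism $\phi : \R \to \R'$ in $\TRig$, the required invertible 2-cell is obtained by pasting the strong symmetric monoidal coherence of $\phi$ with the pseudonaturality cells $\phi_F$ and $\phi_G$:
\[
\phi\bigl(F_\R(x) \otimes G_\R(x)\bigr) \xrightarrow{\sim} \phi(F_\R(x)) \otimes \phi(G_\R(x)) \xrightarrow{\sim} F_{\R'}(\phi(x)) \otimes G_{\R'}(\phi(x)).
\]
The pseudonaturality axioms for $F \otimes G$ then reduce to the corresponding axioms for $F$ and $G$ together with the hexagon and unit coherences of $\phi$ as a symmetric monoidal functor, a routine pasting check. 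The associator, unitor, and braiding for $\otimes$ on $\Schur$ are assembled componentwise from the corresponding data in each 2-rig $\R$, and all coherence identities hold in $\Schur$ because they hold in each $\R$. Linearity on hom-spaces is inherited similarly, giving $\Schur$ the structure of a 2-rig.

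For step two, I would trace the chain
\[
\Schur \simeq [\TRig(\ksbar, -), \TRig(\ksbar, -)] \simeq \TRig(\ksbar, \ksbar) \simeq U(\ksbar)
\]
used in the proof of \cref{thm:schur_versus_poly}. Following that chain, a Schur functor $F$ is sent to the object $F_\ksbar(\eta) \in U(\ksbar)$, where $\eta$ names the generator $1 \to \ksbar$. By construction, the transported tensor on $\Schur$ is the one for which $F \otimes G$ corresponds to $F_\ksbar(\eta) \otimes_\ksbar G_\ksbar(\eta) \in U(\ksbar)$. But the pointwise tensor evaluated at $\R = \ksbar$ and $x = \eta$ gives exactly $(F \otimes G)_\ksbar(\eta) = F_\ksbar(\eta) \otimes_\ksbar G_\ksbar(\eta)$, so the pointwise and transported tensors send $F \otimes G$ to the same object of $U(\ksbar)$. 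This identifies them as 2-rig structures on $\Schur$. As a consistency check, writing $F = F_\rho$ and $G = F_\tau$ as in \cref{thm:schur_versus_poly}, the bilinearity of $\otimes$ and the isomorphism $k[S_n] \otimes k[S_m] \cong k[S_n \times S_m]$ yield
\[
F_\rho(x) \otimes F_\tau(x) \cong \bigoplus_{n, m} \bigl(\rho(n) \otimes \tau(m)\bigr) \otimes_{k[S_n \times S_m]} x^{\otimes (n+m)} \cong F_{\rho \ast \tau}(x),
\]
where $\ast$ is the Day convolution of \cref{prop:poly_2rig}, exactly as expected.

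The main obstacle is the 2-categorical coherence in step one: one must verify that the pastings of pseudonaturality 2-cells and symmetric monoidal coherence isomorphisms assemble into a \emph{symmetric pseudomonoid} structure on $\Schur$, not merely data that look right on underlying objects. However, this is a routine diagram chase that reduces immediately to axioms holding pointwise in each 2-rig $\R$, so it is bookkeeping rather than a genuine difficulty.
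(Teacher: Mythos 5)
Your proof is correct but takes a genuinely different route from the paper's. The paper defines the tensor product on $\Schur$ by transporting the Day convolution along the equivalence $\Schur \simeq U(\ksbar) \simeq \Poly$, and then \emph{computes} directly from the Day convolution formula that the transported tensor is isomorphic to the pointwise formula; the explicit chain of isomorphisms you relegate to a ``consistency check'' at the end is essentially the entirety of the paper's argument. By contrast, you reverse the burden: you first verify (your step one) that the pointwise formula defines a 2-rig structure on $\Schur$ in its own right --- a coherence check the paper never needs to perform, since its tensor is defined by transport of structure and so is automatically a 2-rig product --- and then (your step two) identify the pointwise and transported tensors by the Yoneda-flavored observation that both restrict to the same thing when evaluated at the universal pair $(\ksbar, \eta)$, where $\eta$ is the generator. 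The trade-off: your route avoids unwinding the Day convolution formula at the cost of carrying out the pseudonaturality and symmetric-pseudomonoid coherence bookkeeping for the pointwise $\otimes$, whereas the paper avoids that bookkeeping at the cost of a direct calculation. Your evaluation-at-the-generator argument also gives a slightly sharper conclusion: since $(F \otimes G)_\ksbar(\eta) = F_\ksbar(\eta) \otimes_\ksbar G_\ksbar(\eta)$ is a literal equality, the evaluation functor $\Schur \to U(\ksbar)$ is a \emph{strict} monoidal equivalence with respect to the pointwise structure, which trivializes the remaining coherence conditions. Both routes are sound.
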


\begin{proof}  
    We prove this using \cref{thm:schur_versus_poly}. Given $F, G \in \Schur$, up to isomorphism we may assume 
    \[    
        F_\R(x) = \bigoplus_n \rho(n) \otimes_{k[S_n]} x^{\otimes n} 
    \]
    and
    \[   
        G_\R(x) = \bigoplus_n \psi(n) \otimes_{k[S_n]} x^{\otimes n} 
    \]
    for some polynomial species $\rho, \psi$. Then the tensor product of $F$ and $G$ corresponding to the Day tensor product of $\rho$ and $\psi$ is
    \begin{align*}
        (F \otimes G)_\R(x)
        &= \bigoplus_n (\rho \ast \psi)(n) \otimes_{k[S_n]} x^{\otimes n} 
        \\&\cong \bigoplus_n \bigoplus_{i+j = n} (\rho(i) \otimes \psi(j)) \otimes_{k[S_i \times S_j]} k[S_n] \otimes_{k[S_n]} x^{\otimes n} 
        \\&\cong \bigoplus_n \bigoplus_{i+j = n} (\rho(i) \otimes \psi(j)) \otimes_{k[S_i \times S_j]} x^{\otimes n} 
        \\&\cong \big( \bigoplus_i \rho(i)  \otimes_{k[S_i]} x^{\otimes i} \big)
        \otimes \big( \bigoplus_j \psi(j)  \otimes_{k[S_j]} x^{\otimes j} \big) 
        \\&\cong F_\R(x) \otimes G_\R(x) .
    \end{align*}
    The same argument applies to morphisms in $\R$.  
\end{proof}

Using these propositions we can equip $\Schur$ or $\Poly$ with the structure of a 2-rig, making either one into the free 2-rig on one generator. However, in what follows we usually adhere to this discipline: we use $\ksbar$ to stand for the free 2-rig on one generator, and $\Schur \simeq U(\ksbar)$ for the underlying category.

\section{The 2-birig structure on Schur functors}
\label{sec:2-birig}

Now we turn to the 2-birig structure of $\Schur$. It is again helpful to start with a warmup exercise one step down the $n$-categorical ladder. In \cref{sec:intro} we described the `co-operations' on the biring $\Z[x]$. But we did not explain in detail how these co-operations arise. Let us do this now---but in the closely related case of the birig $\N[x]$.

First, recall that a \define{rig} (also called `semirings') is a set $R$ with a commutative monoid structure $(R, +, 0)$ and a commutative monoid structure $(R, \cdot, 1)$ such that $\cdot$ distributes over $+$ and $0 \cdot r = 0$ for all $r \in R$. Commutative monoids admit a tensor product analogous to that of abelian groups, which we denote by $\otimes$.

This leads to a more highbrow definition of a rig: it is a commutative monoid in $(\CMon, \otimes)$   The category of commutative monoid objects in a symmetric monoidal category $(C, \otimes)$ is always cocartesian, with the tensor product $\otimes$ serving to give the coproduct of commutative monoid objects. Thus $(\Rig, \otimes)$ is a cocartesian monoidal category. In other words, $(\Rig\op, \otimes)$ is cartesian monoidal.

Now, a \define{birig} is a rig $B$ such that the functor $\Rig(B, -)$ is equipped with a lift $\Phi_B$ as follows: 
\[
\begin{tikzcd}[column sep = large]
    &
    \Rig
    \arrow[d, "U"]
    \\
    \Rig
    \arrow[ur, "\Phi_B"]
    \arrow[ur, swap, ""{name = phi}, phantom, bend right = 5, pos = 0.6]
    \arrow[r, swap, "{\Rig(B, -)}"]
    \arrow[from = phi, r, Rightarrow, "\sim", sloped]
    &
    \Set
\end{tikzcd}
\] 
where $U$ picks out the underlying set of a rig. However, we can equivalently say that a birig is a rig object in the cartesian monoidal category $(\Rig\op, \otimes)$. Let us explain why. 

Concretely, a rig object in $(\Rig\op, \otimes)$ is just a rig $B$ equipped with these rig homomorphisms:
\begin{itemize}
    \item coaddition: $\alpha \maps B \to B \otimes B$
    \item co-zero: $o \maps B \to \N$
    \item comultiplication: $\mu \maps B \to B \otimes B$
    \item counit, or co-one: $\epsilon \maps B \to \N$
\end{itemize}
obeying dualized versions of the ring axioms. Note that $\N$ appears here because it is the initial rig, hence terminal in $\Rig\op$.

How do we get a rig object in $\Rig\op$ from a rig $B$ for which $\Rig(B, -)$ is equipped with a lift to a functor $\Phi_B \maps \Rig \to \Rig$? We can illustrate this by constructing the coaddition $\alpha \maps B \to B \otimes B$. This comes from \emph{addition}, as follows.

Since addition is defined for every rig and is preserved by rig homomorphisms, it defines a natural transformation from $U \times U$ to $U$:
\[   
\begin{array}{cccl}
   +_R \maps & U(R) \times U(R) &\to& U(R)\\
             & (r, s) & \mapsto & r + s.
\end{array}
\]
By the triangle above this gives a natural transformation 
\[  
    \Rig(B, -) \times \Rig(B, -) \To \Rig(B, -) , 
\]
but the functor at left is naturally isomorphic to $\Rig(B \otimes B, -)$, since $B \otimes B$ is the coproduct in $\Rig$ of two copies of $B$. We thus obtain a natural transformation
\[  
    \Rig(B \otimes B, -) \To \Rig(B, - ) .
\]
By Yoneda, this comes from a rig homomorphism
\[    
    \alpha \maps B \to B \otimes B 
\]
and we define this to be coaddition for the birig $B$. We can similarly construct all the other co-operations that a birig has, and check that they obey dualized versions of the rig laws.

How can we use these ideas to actually compute the birig co-operations for $\N[x]$, the rig of polynomials with natural number coefficients in one variable? Since $\N[x]$ is the free rig on one generator, homomorphisms from it to any other rig correspond to elements of that rig, and it becomes a birig with the identity functor as lift:
\[
\begin{tikzcd}[column sep = large]
    &
    \Rig
    \arrow[d, "U"]
    \\
    \Rig
    \arrow[ur, "1"]
    \arrow[ur, swap, ""{name = phi}, phantom, bend right = 5, pos = 0.6]
    \arrow[r, swap, "{\Rig(\N[x], -)}"]
    \arrow[from = phi, r, Rightarrow, "\sim", sloped]
    &
    \Set
\end{tikzcd}
\] 
Coaddition $\alpha \maps \N[x] \to \N[x] \otimes \N[x]$ is the homomorphism such that precomposing with $\alpha$ gives a function $\alpha^*$ that makes this square commute for any rig $R$:
\[  
\begin{tikzcd}
    \Rig(\N[x] \otimes \N[x], R) 
    \arrow[d, swap, "\sim", sloped]  
    \arrow[r, "\alpha^*"] 
    &
    \Rig(\N[x], R) 
    \arrow[d, "\sim", sloped] 
    \\
    U(R) \times U(R) 
    \arrow[r, swap, "+"] 
    &
    U(R).
    \end{tikzcd}
\]
where $+$ comes from addition on $R$. Let us show that
\[    
    \alpha(x) = x \otimes 1 + 1 \otimes x. 
\]
To prepare for later calculations let us identify $\N[x] \otimes \N[x]$ with $\N[x, y]$ and write
\[     
    \alpha(x) = x + y .
\]
Just as homomorphisms from $\N[x]$ to $R$ correspond to elements of $U(R)$, homomorphisms $f \maps \N[x, y] \to R$ correspond to pairs $(r, s) \in U(R) \times U(R)$ as follows:
\[      
    f(x) = r, \qquad   f(y) = s 
\]
Since
\[   
    (\alpha^* f)(x) = f(x + y) = r + s. 
\]
we see $\alpha^*$ indeed corresponds to addition in $R$ as desired. The same sort of calculation lets us determine all the co-operations on $\N[x]$:
\begin{itemize}
    \item coaddition: $\alpha(x) = x + y \in \N[x, y]$
    \item co-zero: $o(x) = 0 \in \N$
    \item comultiplication: $\mu(x) = xy \in \N[x, y]$
    \item co-one: $\epsilon(x) = 1 \in \N$.
\end{itemize}

With our warmup exercise complete, we can now copy this reasoning to show that $\Schur$ is a 2-birig and compute some of its co-operations. First:

\begin{defn} 
    A \define{2-birig} is a 2-rig $\B$ for which the functor $\TRig(\B, -)$ is equipped with a lift to a functor $\Phi_\B \maps \TRig \to \TRig$:
    \[
    \begin{tikzcd}[column sep = huge]
        &
        \TRig
        \arrow[d, "U"]
        \\
        \TRig
        \arrow[ur, "\Phi_\B"]
        \arrow[ur, swap, ""{name = phi}, phantom, bend right = 5, pos = 0.6]
        \arrow[r, swap, "{\TRig(\B, -)}"]
        \arrow[from = phi, r, Rightarrow, "\sim", sloped]
        &
        \CCat.
    \end{tikzcd}
    \] 
\end{defn}

To describe the 2-birig structure on $\Schur$, recall that we have made $\Schur$ into a 2-rig equivalent to $\ksbar$. So, we start by putting a 2-birig structure on $\ksbar$. By \cref{thm:representing_U} we know that $\ksbar$ represents the 2-functor $U$. This makes $\ksbar$ into a 2-birig with the identity lift:
\[
\begin{tikzcd}[column sep = huge]
    &
    \TRig
    \arrow[d, "U"]
    \\
    \TRig
    \arrow[ur, "1"]
    \arrow[ur, swap, ""{name = phi}, phantom, bend right = 5, pos = 0.6]
    \arrow[r, swap, "{\TRig(\ksbar, -)}"]
    \arrow[from = phi, r, Rightarrow, "\sim", sloped]
    &
    \CCat.
\end{tikzcd}
\] 

From this we can compute co-operations on $\ksbar$, just as we did in our warm-up exercise. For this we need to show that just as $\Rig\op$ is a cartesian category, $\TRig\op$ is a cartesian 2-category.

We saw that $(\Rig\op, \otimes)$ is cartesian monoidal by noting that the category of commutative monoids in any symmetric monoidal category is cocartesian monoidal, and
rigs are commutative monoid objects in $(\CMon, \otimes)$. We now categorify this argument replacing $\CMon$ with $\Cauch\Lin$ (see \cref{defn:CauchLin}). In the proof of \cref{lem:U2} we saw that $(\Cauch\Lin, \boxtimes)$ is a symmetric monoidal 2-category. The unit object for this symmetric monoidal 2-category is $\Fin\Vect$.

A 2-rig $\R$ is a symmetric monoidal linear category that is also Cauchy complete. Thus, it comes with a tensor product or multiplication
\[ 
    m \maps \R \boxtimes \R \to \R
\]
which is a morphism in $\Cauch\Lin$. It also comes with a unit object $I \in \R$, which determines a morphism 
\[    
    I \maps \Fin\Vect \to \R  
\]
in $\Cauch\Lin$, unique up to natural isomorphism, such that $i(k) = I$. It also comes with an associator, left and right unitors, and symmetry that are 2-morphisms in $\Cauch\Lin$, obeying the usual equations in the definition of symmetric monoidal category. We may summarize all this by saying that a 2-rig is a symmetric pseudomonoid in the symmetric monoidal 2-category $(\Cauch\Lin, \boxtimes)$. 
 
Given 2-rigs $\R$ and $\R'$ there is a natural way to make $\R \boxtimes \R'$ into a 2-rig. The multiplication in $\R \boxtimes \R'$ is the composite
\[   (\R \boxtimes \R') \boxtimes (\R \boxtimes \R') \xrightarrow{1 \boxtimes S_{\R', \R} \boxtimes 1} (\R \boxtimes \R) \boxtimes (\R' \boxtimes \R') \xrightarrow{m \boxtimes m'} \R \boxtimes \R' \]
where $m$ is the multiplication for $\R$, $m'$ is the multiplication for $\R'$, $S_{\R, \R'}$ is the symmetry in $(\Cauch\Lin, \boxtimes)$, and we have suppressed associators. The unit for $\R \boxtimes \R'$ is 
\[   \Fin\Vect \xrightarrow{\sim} \Fin\Vect \boxtimes \Fin\Vect \xrightarrow{I \boxtimes I'} \R \boxtimes \R' \]
where $I$ is the unit for $\R$ and $I'$ is the unit for $\R'$. The rest of the 2-rig structure is equally straightforward. The interesting fact is that this tensor product of 2-rigs is their coproduct:

\begin{lem}
\label{lem:2Rig-cocartesian}
    The symmetric monoidal 2-category $\TRig$ is cocartesian, with the coproduct of $\R, \R' \in \TRig\op$ being $\R \boxtimes \R'$.
\end{lem}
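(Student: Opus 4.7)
The plan is to categorify the standard argument that the tensor product of commutative rings is their coproduct. Since a 2-rig is by definition a symmetric pseudomonoid in the symmetric monoidal 2-category $(\Cauch\Lin, \boxtimes)$ with unit $\Fin\Vect$, the lemma should follow from the general principle that commutative (i.e.\ symmetric) pseudomonoids in a sufficiently nice symmetric monoidal 2-category admit bicoproducts given by the ambient tensor product.

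Concretely, I would proceed in three steps. First, construct the coprojections: define $\iota_\R \maps \R \to \R \boxtimes \R'$ as the composite
\[
    \R \xrightarrow{\sim} \R \boxtimes \Fin\Vect \xrightarrow{1 \boxtimes I'} \R \boxtimes \R',
\]
where $I' \maps \Fin\Vect \to \R'$ picks out the unit object of $\R'$; by \cref{prop:base_change} this is the essentially unique symmetric monoidal linear functor from $\Fin\Vect$ to $\R'$. Define $\iota_{\R'}$ symmetrically. Each coprojection is a morphism in $\TRig$ because the unit of a symmetric pseudomonoid is canonically a pseudomonoid homomorphism. Second, given 2-rig morphisms $f \maps \R \to \S$ and $g \maps \R' \to \S$, form the copairing
\[
    [f, g] \maps \R \boxtimes \R' \xrightarrow{f \boxtimes g} \S \boxtimes \S \xrightarrow{m_\S} \S,
\]
where $m_\S$ is the multiplication of $\S$. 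The crucial check is that $[f, g]$ is symmetric monoidal linear, which reduces to $m_\S \maps \S \boxtimes \S \to \S$ being itself a morphism in $\TRig$; and this is exactly the statement that $\S$ is a \emph{commutative} pseudomonoid, i.e.\ that its multiplication respects the symmetry. The isomorphisms $[f, g] \circ \iota_\R \cong f$ and $[f, g] \circ \iota_{\R'} \cong g$ then fall out of the unit laws for $\S$.

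The main obstacle will be the 2-dimensional universal property: given a 2-rig morphism $h \maps \R \boxtimes \R' \to \S$ together with invertible 2-cells $h \circ \iota_\R \cong f$ and $h \circ \iota_{\R'} \cong g$, I must produce an essentially unique invertible 2-cell $h \cong [f, g]$, and verify the analogous uniqueness one level up for modifications. Heuristically, the symmetric monoidal structure on $h$ forces
\[
    h(x \boxtimes y) \;\cong\; h\bigl(\iota_\R(x) \otimes \iota_{\R'}(y)\bigr) \;\cong\; h(\iota_\R(x)) \otimes h(\iota_{\R'}(y)) \;\cong\; f(x) \otimes g(y) \;=\; [f, g](x \boxtimes y),
\]
and Cauchy completeness of $\R \boxtimes \R'$ (built in via the definition $\R \boxtimes \R' = \overline{U\R \otimes U\R'}$) extends this pointwise isomorphism to all objects; the monoidal coherence axioms for $h$ then pin down the 2-cell uniquely up to canonical iso. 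Rather than grinding through these verifications by hand, the cleanest route is probably to invoke the general 2-dimensional fact that in any symmetric monoidal 2-category $\mathbf V$ with enough pseudocolimits, the 2-category of symmetric pseudomonoids in $\mathbf V$ is cocartesian with coproduct given by the ambient tensor product, and specialize to $\mathbf V = (\Cauch\Lin, \boxtimes)$.
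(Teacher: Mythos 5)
Your proposal is correct and takes essentially the same route as the paper: both recognize 2-rigs as symmetric pseudomonoids in $(\Cauch\Lin, \boxtimes)$ and reduce to the general fact that the 2-category of symmetric pseudomonoids in a symmetric monoidal bicategory is cocartesian with coproduct given by the ambient tensor product. The paper simply cites Sch\"appi for this general fact rather than sketching the coprojection/copairing construction; your added hypothesis of ``enough pseudocolimits'' is unnecessary for that result, but does no harm here.
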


\begin{proof}
This follows from a general result proved by Sch\"appi \cite[Thm.\ 5.2]{Schappi}: given symmetric pseudomonoids $\M$ and $\M'$ in a symmetric monoidal bicategory $(\C, \otimes)$, there is a natural way to make $\M \otimes \M'$ into a symmetric pseudomonoid, of which the tensor product of 2-rigs is an example. Furthermore there is a 2-category of symmetric pseudomonoids in $(\C, \otimes)$, and this is cocartesian in the 2-categorical sense, with $\M \otimes \M'$ being the 2-categorical coproduct of $\M$ and $\M'$.
\end{proof}

We now compute the coaddition 
\[   
    \alpha \maps \ksbar \to \ksbar \boxtimes \ksbar. 
\]  
For this, it is helpful to note that, just as the ring $\Z[x] \otimes \Z[x]$ is isomorphic to $\Z[x, y]$, $\ksbar \boxtimes \ksbar$ is equivalent to $\overline{k\SS(2)}$, where $\SS$ is the left adjoint 2-functor sending categories to symmetric monoidal 2-categories, as in Lemma \ref{lem:freeSMC}.  Indeed:

\begin{lem}
\label{lem:n-variable}
    The following are equivalent Cauchy complete linear categories:
    \begin{itemize}
        \item $(\ksbar)^{\boxtimes n}$ \vskip 0.5em
        \item $\overline{k\SS(n)}$ \vskip 0.5em
        \item the linear category of \define{$n$-variable polynomial species}: functors $\S^n \to \Fin\Vect$ such that $F(x) \cong \{0\}$ except for finitely many isomorphism classes of objects $x$, and natural transformations between these. 
    \end{itemize}
    The underlying category of any of these is equivalent to the category of \define{$n$-variable Schur functors}, $[U^n, U]$.
\end{lem}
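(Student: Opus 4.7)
The plan is to prove the three claimed equivalences in sequence, and then deduce the fourth (the identification with $n$-variable Schur functors) by the same Yoneda-style argument used to prove \cref{thm:schur_versus_poly}. First I would establish
\[
(\ksbar)^{\boxtimes n} \;\simeq\; \overline{(k\S)^{\otimes n}} \;\simeq\; \overline{k\S^n}.
\]
The first equivalence is an iterated application of the strong monoidal property of Cauchy completion $\overline{A \otimes B} \simeq \overline{A} \boxtimes \overline{B}$ established in the proof of \cref{lem:U2}. The second equivalence uses that $k$-linearization $\Cat \to \Lin$ is strong monoidal, sending the cartesian product of categories to the tensor product of linear categories; this in turn is a consequence of the fact that the free $k$-vector space functor $\Set \to \Vect$ is strong monoidal. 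Under the identification $\S(n) = \S^n$ (the groupoid of $n$-tuples of finite sets, with automorphism groups $S_{i_1} \times \cdots \times S_{i_n}$), this gives $(\ksbar)^{\boxtimes n} \simeq \overline{k\S(n)}$.

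Next I would identify $\overline{k\S(n)}$ with the category of $n$-variable polynomial species by mimicking the proof of \cref{thm:poly_versus_kS}. Since $\S(n)$ is a groupoid, a functor $\S^n \to \Fin\Vect$ is the same as a $\Vect$-enriched functor $k\S(n)\op \to \Vect$ valued in finite-dimensional spaces, vanishing on all but finitely many isomorphism classes. By Maschke's theorem applied to the finite group $S_{i_1} \times \cdots \times S_{i_n}$, each finite-dimensional representation is a retract of a finite direct sum of copies of $k[S_{i_1} \times \cdots \times S_{i_n}] \cong k[S_{i_1}] \otimes \cdots \otimes k[S_{i_n}]$, which is precisely the representable $k\S(n)(-, (i_1, \ldots, i_n))$. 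Thus the $n$-variable polynomial species are exactly the retracts of finite direct sums of representables, i.e.\ the objects of $\overline{k\S(n)}$.

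Finally, I would identify the underlying category with $[U^n, U]$. By \cref{lem:2Rig-cocartesian}, $(\ksbar)^{\boxtimes n}$ is the $n$-fold coproduct of $\ksbar$ in $\TRig$, hence represents the 2-functor $U^n$:
\[
\TRig\bigl((\ksbar)^{\boxtimes n}, -\bigr) \;\simeq\; \TRig(\ksbar, -)^n \;\simeq\; U^n(-),
\]
using \cref{thm:representing_U}. Then the same chain of equivalences appearing in the proof of \cref{thm:schur_versus_poly} yields
\[
U\bigl((\ksbar)^{\boxtimes n}\bigr) \;\simeq\; \TRig\bigl(\ksbar, (\ksbar)^{\boxtimes n}\bigr) \;\simeq\; \bigl[\TRig((\ksbar)^{\boxtimes n}, -), \TRig(\ksbar, -)\bigr] \;\simeq\; [U^n, U],
\]
with the middle step being the 2-categorical Yoneda lemma.

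The main obstacle I expect is purely bookkeeping: verifying that the various monoidal 2-functors involved (linearization and Cauchy completion) really are strong monoidal at the level of enriched bicategories, so that the iterated identification $(\ksbar)^{\boxtimes n} \simeq \overline{k\S^n}$ assembles coherently rather than merely equivalence-by-equivalence. The representation-theoretic content (extending Maschke to products of symmetric groups) and the Yoneda/representability step are routine once the first identification is in place.
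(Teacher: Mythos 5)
Your proof is correct and follows essentially the same decomposition the paper uses: the chain $(\ksbar)^{\boxtimes n} \simeq \overline{(k\S)^{\otimes n}} \simeq \overline{k(\S^n)}$ via strong monoidality of $\overline{(-)}$ and $k(-)$, the Maschke-for-products argument to identify $\overline{k\S(n)}$ with $n$-variable polynomial species, and the representability-plus-Yoneda chain for $[U^n, U]$. The one place where the paper does something structurally cleaner is the identification $\S(n) \simeq \S^n$, which you treat as given by writing out the concrete description of the free symmetric monoidal category on $n$ objects. The paper instead observes that $\S \maps \CCat \to \SMC$, being a left 2-adjoint, sends the $n$-fold coproduct $1 + \cdots + 1 = n$ in $\CCat$ to the $n$-fold coproduct of $\S$ in $\SMC$, and then invokes the fact that $\SMC$ has 2-categorical biproducts (Fong--Spivak) to conclude that this coproduct coincides with the product $\S^n$. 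Your concrete identification is correct but does require proof; the paper's biproduct argument avoids it entirely. Incidentally, this step--not the coherence of the monoidal structures on linearization and Cauchy completion that you flag as the main worry--is where the genuine content of the first equivalence resides.
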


\begin{proof}
    First we show that $(\ksbar)^{\boxtimes n} \simeq \overline{k\SS(n)}$. Since $\SS$ is a left adjoint, $\SS(n)$ is the $n$-fold coproduct of $\SS(1) \simeq \S$ in $\SMC$. Since $\SMC$ has biproducts in the 2-categorical sense \cite[Thm.\ 2.3]{FongSpivak}, this $n$-fold coproduct is equivalent to $\S^n$. We thus have these equivalences in $\Cauch\Lin$:
    \[   \overline{k\SS(n)} \simeq \overline{k(\S^n)} \simeq 
    \overline{(k\S)^{\boxtimes n}} \simeq (\ksbar)^{\boxtimes n} .\]
    Here the second uses the fact that $k(-) \maps \SMC$ sends products of categories to tensor products of linear categories (cf.\ the proof of \cref{lem:U1}). The third uses the fact that $\overline{(-)}$ preserves the tensor product of linear categories (cf.\ the proof of \cref{lem:U2}). 
    
    The equivalence of $\overline{k\SS(n)}$ to the linear category of $n$-variable polynomial species can be shown using the same style of argument as in the proof of \cref{thm:poly_versus_kS}.
    
    Finally, note that 
    \[
        [U^n, U] \simeq 
        [\TRig(\ksbar, -)^n, 
        \TRig(\ksbar, -) ] \simeq
        \TRig(\ksbar, \ksbar^{\boxtimes n}) \simeq U(\ksbar^{\boxtimes n}).
    \] 
    Here the first equivalence comes from  \cref{thm:representing_U}, the second comes from the 2-categorical Yoneda lemma and the fact that $(\ksbar)^{\boxtimes n}$ is the $n$-fold coproduct of $\ksbar$, and the third is \cref{thm:poly_versus_kS}. 
    
    We can trace through the equivalences to make explicit the equivalence 
    \[ \phi \maps U(\ksbar^{\boxtimes n}) \simeq [U^n, U]. \]
    An object of $U(\ksbar^{\boxtimes n})$ is, by definition, an object of the linear Cauchy completion of $(k \S)^{\otimes n} \simeq k(\S^n) \simeq k(\SS(n))$. It is given by a collection $F$ of finite-dimensional representations of $n$-fold products of symmetric groups, i.e.\ functors 
    \[ F(m_1, \ldots, m_n) \maps S_{m_1} \times \cdots \times S_{m_n} \to \Vect\] 
    that are zero-dimensional except for finitely many tuples $(m_1, \ldots, m_n)$ of natural numbers. 
    
    For $\R$ a 2-rig, given a tuple of objects $x = (x_1, \ldots, x_n) \in U^n(\R)$, there is a corresponding symmetric monoidal functor $\S^n \to \R$ that is uniquely induced by the tuple $x$. It takes the object $\mathbf{m} = (m_1, \ldots, m_n) \in \S^n$ to $x^{\mathbf{m}} = x^{\otimes m_1} \otimes \cdots \otimes x^{\otimes m_n}$. The group $S_{m_1} \times \cdots \times S_{m_n}$ acts on this object by permuting tensor factors, and symmetric algebras $kS_{m_1} \otimes \cdots \otimes kS_{m_n}$ act linearly on such objects. In other words, from the tuple $x$ we derive a symmetric monoidal linear functor
    \[ (k\S)^{\otimes n} \to \R \]
    uniquely up to isomorphism. Finally, passing up to the Cauchy completions, we obtain a single object of $\R$ defined by the formula 
    \[ \phi(F)(x_1, \ldots, x_n) = \bigoplus_\mathbf{m = (m_1, \ldots, m_n)} F(m_1, \ldots, m_n) \otimes_{k(S_{m_1}) \otimes \dots \otimes k(s_{m_n})} (x^{\otimes m_1} \otimes \cdots \otimes x^{\otimes m_n}), \] 
    which, adapting a familiar multi-index notation $\mathbf{m} = (m_1, \ldots, m_n)$, might be be more neatly written as 
    \[ \phi(F)(x) = \int^{\mathbf{m} \in \S^n} F(\mathbf{m}) \cdot x^{\otimes \mathbf{m}} = \bigoplus_\mathbf{m} F(\mathbf{m}) \otimes_{kS_{\mathbf{m}}} x^{\otimes \mathbf{m}}. \qedhere \]
\end{proof}

It follows that the functor $U \times U$ is represented by $\ksbar \boxtimes \ksbar \simeq \overline{k\SS(2)}$. Coaddition can then be defined to be the morphism of 2-rigs 
\[  
    \alpha \maps \ksbar \to \overline{k\SS(2)} 
\]
such that precomposition with this gives a functor $\alpha^*$ for which this square commutes up to a natural isomorphism:
\[  
\begin{tikzcd}
    \TRig(\overline{k\SS(2)}, \R) 
    \arrow[d, "\sim", sloped, swap]  
    \arrow[r, "\alpha^*"] 
    &
    \TRig(\ksbar, \R) 
    \arrow[d, "\sim", sloped] 
    \\
    U(\R) \times U(\R) 
    \arrow[ur, Rightarrow, "\sim", sloped]
    \arrow[r, swap, "\oplus"] 
    &
    U(\R).
\end{tikzcd}
\]
We can follow the proof of the Yoneda lemma to determine what $\alpha$ must be. Put $\R = \overline{k\SS(2)}$, and let $x$ denote the object generating $\ksbar$, and $x, y$ the generators of $\overline{k\SS(2)}$. Chasing the identity object 
\[
    1 \in \TRig(\overline{k\SS(2)}, \overline{k\SS(2)})
\]
around the square, the left vertical functor takes $1$ to the pair $(x, y)$; applying the bottom horizontal map, we arrive at $x \oplus y$. On the other hand, $\alpha^\ast(1) = 1 \circ \alpha = \alpha$, and this maps down to the value $\alpha(x)$, which as we just saw must match $x \oplus y$. Hence 
\[
    \alpha \maps \ksbar \to \overline{k\SS(2)}
\]
must be the 2-rig map, unique up to isomorphism, such that $\alpha(x) = x \oplus y$ in $\overline{k\SS(2)}$.

We can determine other co-operations on $\ksbar$ in the same way:
\begin{itemize}
    \item coaddition: $\alpha(x) = x \oplus y \in \overline{k\SS(2)}$
    \item cozero: $o(x) = 0 \in \Fin\Vect$
    \item comultiplication: $\mu(x) = x \otimes y \in \overline{k\SS(2)}$
    \item counit: $\epsilon(x) = 1 \in \Fin\Vect$.
\end{itemize}

Next we interpret the 2-birig structure of $\ksbar$ directly in terms of $\Schur$. Recall that having an object $F \in \Schur$ is the same as having an endofunctor $F_\R \maps U(\R) \to U(\R)$ on the underlying category of every 2-rig $\R$, depending pseudonaturally on $\R$. This is a technical statement, but we may think of it as saying that a Schur functor $F \maps U \to U$ is a unary functorial operation that is definable in the language of 2-rigs. Similarly, a 2-variable Schur object is the same as an endofunctor $F_\R \maps U(\R) \times U(\R) \to U(\R)$ that depends pseudonaturally on $\R$, or a binary functorial operation definable in the language of 2-rigs. Our next result gives straightforward interpretations of operations and co-operations on $\overline{k \S}$ in terms of their effect on Schur objects.

\begin{thm}
\label{thm:schur_2-birig}
    Under the equivalence $U(\overline{k \S}) \simeq [U, U]$, the 2-rig operations on $U(\overline{k \S})$ correspond to 2-rig operations on $[U, U]$ as follows: 
    \begin{itemize}
        \item addition: Coproduct $\oplus \maps U(\overline{k \S}) \times U(\overline{k \S}) \to U(\overline{k \S})$ corresponds to pointwise coproduct, where $F \oplus G \in \Schur$ is given by
        \[   (F \oplus G)_\R(r)   = F_\R(r) \oplus G_\R(r) \]
        where $\oplus$ on the right is the biproduct in $\R$.
        \item `zero', or additive unit: the initial object $0$ in $U(\overline{k \S})$ corresponds to pointwise $0 \in \Schur$ given by
        \[    0_\R(r) = 0  \]
        where $0$ at right is the zero object in $\R$.
        \item multiplication: The convolution product $\ast \maps U(\overline{k \S}) \times U(\overline{k \S}) \to U(\overline{k \S})$ corresponds to pointwise tensor: $F \otimes G \in \Schur$ is given by
        \[    (F \otimes G)_\R(r) = F_\R(r) \otimes G_\R(r) .\]
        \item `one', or multiplicative unit: The convolution unit $I$ of $U(\overline{k \S})$ corresponds to the pointwise monoidal unit $I \in \Schur$ given by
        \[    I_\R(r) = I  \]
        where $I$ at right is the unit for the tensor product in $\R$.
    \end{itemize}
    Under the equivalences $U(\overline{k \S}^{\boxtimes n}) \simeq [U^n, U]$, co-operations on the 2-rig $\overline{k \S}$ correspond to precomposition with operations as follows:
    \begin{itemize}
        \item coaddition: $U(\alpha) \maps U(\overline{k \S}) \to U(\overline{k \S}^{\boxtimes 2})$ corresponds to precomposition $[\oplus, U]\colon [U, U] \to [U^2, U]$, taking $F \maps U \to U$ to $F \circ \oplus \maps U^2 \to U$ given by
        \[   
            (F \circ \oplus)_{\R}(r, s) = F_\R(r \oplus s).
        \]
        \item `co-zero', or coadditive counit: $U(o) \maps U(\overline{k \S}) \to \Fin\Vect$ corresponds to precomposition with, i.e.\ evaluation at, $0$: the corresponding map $[U, U] \to [1, U]$ is given by
        \[    
            (F \circ \lceil 0 \rceil)_\R = F_\R(0).
        \]
        \item comultiplication: $U(\mu) \colon U(\overline{k \S}) \to U(\overline{(k \S)^{\boxtimes 2}})$ corresponds to precomposition $[\otimes, U] \colon [U, U] \to [U^2, U]$, taking $F \maps U \to U$ to $F \circ \otimes \maps U^2 \to U$ given by  
        \[  
            (F \circ \otimes)_{\R}(r, s) = F_\R(r \otimes s). 
        \]
        \item `co-one' or comultiplicative counit: $U(\epsilon) \maps U(\overline{k \S}) \to \Fin\Vect$ corresponds to precomposition with, i.e.\ evaluation at, the monoidal unit $I$: the corresponding map $[U, U] \to [1, U]$ is given by 
        \[    
            (F \circ \lceil I \rceil)_\R = F_\R(I).
        \]
    \end{itemize}
\end{thm}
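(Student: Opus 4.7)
The plan is to verify all eight correspondences directly using the explicit coend description of Schur functors from \cref{thm:schur_versus_poly}, namely $F_{\rho,\R}(x) = \bigoplus_n \rho(n) \otimes_{k[S_n]} x^{\otimes n}$, combined with the naturality of Schur functors (\cref{prop:Schurnatural}) and the evaluation-at-generators form of the equivalence $U(\ksbar^{\boxtimes n}) \simeq [U^n, U]$ that was unpacked in \cref{lem:n-variable}. For the four positive operations: the tensor case is already \cref{prop:schur_2rig}. For coproduct and zero, the coend formula is linear and colimit-preserving in $\rho$, so biproducts and the zero object in $U(\ksbar)$ pass through pointwise to biproducts and the zero endofunctor on the Schur side. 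For the monoidal unit, the unit of Day convolution on $\ksbar$ is $\S(-,0)$ (\cref{cor:pleth}), and plugging this $\rho$ into the coend formula yields $F_{\S(-,0),\R}(r) = \S(0,0) \otimes_{k[S_0]} r^{\otimes 0} \cong I_\R$, the pointwise tensor unit.

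For the four co-operations, the key observation is the following Yoneda-style principle. Each co-operation is a 2-rig map $\gamma \maps \ksbar \to \ksbar^{\boxtimes n}$ determined (up to isomorphism, by \cref{lem:U2}) by its value $\gamma(x) = g \in U(\ksbar^{\boxtimes n})$ at the generator. Given any Schur functor $F \cong F_\rho$ and the element $F_\rho(x) \in U(\ksbar)$ it classifies, naturality of $F_\rho$ along $\gamma$ gives $U(\gamma)(F_\rho(x)) \cong F_{\rho, \ksbar^{\boxtimes n}}(\gamma(x)) = F_{\rho,\ksbar^{\boxtimes n}}(g)$. Now pass through $\phi \maps U(\ksbar^{\boxtimes n}) \simeq [U^n, U]$; this equivalence evaluates an object of $U(\ksbar^{\boxtimes n})$ at a tuple $(r_1, \ldots, r_n) \in U(\R)^n$ by applying the classifying 2-rig morphism $\overline{(r_1, \ldots, r_n)} \maps \ksbar^{\boxtimes n} \to \R$. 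Applying naturality one more time yields
\[
U(\gamma)(F)_\R(r_1, \ldots, r_n) \cong F_{\rho, \R}\bigl(\overline{(r_1, \ldots, r_n)}(g)\bigr).
\]
Specializing to $g = x_1 \oplus x_2$, $g = 0$, $g = x_1 \otimes x_2$, and $g = I$ gives $F_\R(r \oplus s)$, $F_\R(0)$, $F_\R(r \otimes s)$, and $F_\R(I)$ respectively, which are exactly the four formulas in the statement for $\alpha, o, \mu, \epsilon$.

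The main work, beyond these applications of naturality and the coend formula, is confirming that the equivalence $\phi \maps U(\ksbar^{\boxtimes n}) \simeq [U^n, U]$ really is computed by evaluation against tuples of generators. This requires tracing through the composite
\[
U(\ksbar^{\boxtimes n}) \simeq \TRig(\ksbar, \ksbar^{\boxtimes n}) \simeq [\TRig(\ksbar, -)^n, \TRig(\ksbar, -)] \simeq [U^n, U]
\]
appearing in \cref{lem:n-variable}, but at each stage the equivalence is given by an explicit representability or Yoneda identification: evaluation at the generator of $\ksbar$, together with postcomposition by the classifying 2-rig map for a tuple of objects. The explicit coend formula for $\phi$ in the proof of \cref{lem:n-variable} makes this transparent. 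Once that is in hand, the eight identifications claimed in the theorem follow by direct substitution into the coend formula, and the result is established.
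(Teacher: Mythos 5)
Your proof is correct and follows essentially the same route as the paper's: the four operations are handled via \cref{prop:schur_2rig} and direct use of the coend formula, and the four co-operations are handled by the Yoneda characterization of each co-operation as the 2-rig map $\gamma$ sending the generator $x$ to $b(x_1,\ldots,x_n)$, followed by a pseudonaturality chase through the equivalences $U(\ksbar^{\boxtimes n}) \simeq [U^n,U]$ — you simply spell the chase out in more explicit element-level detail than the paper's diagram-chase phrasing. One small slip to fix: you cite \cref{cor:pleth} for the claim that the Day convolution unit is $\S(-,0)$, but that corollary concerns the substitution (plethysm) product whose unit is $\S(-,1)$; the Day convolution unit is indeed the representable at the additive monoidal unit $0$, but this follows from the general theory of Day convolution (cf.\ \cref{prop:poly_2rig}), not from \cref{cor:pleth}.
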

\begin{proof}
That the convolution product $\ast$ corresponds to pointwise tensor on $\Schur$ was the content of \cref{prop:schur_2rig}, and the other operations are evident. 

On the other hand, as we saw in the case of coaddition using the Yoneda lemma, each co-operation cited in the theorem, of the form
\[   
    \beta \maps \ksbar \to \ksbar^{\boxtimes n}\simeq \overline{k\SS(n)}, 
\]
is \emph{defined} to be the 2-rig map, uniquely determined up to isomorphism, such that 
\[
    \beta(x) = b(x_1, \ldots, x_n)
\]
where $x_1, \ldots, x_n$ are the generators of $\overline{k\SS(n)}$, and $b \maps U(\overline{k\SS(n)})^n \longrightarrow  U(\overline{k\SS(n)})$ is the corresponding Schur-functor operation evaluated at $\R = \overline{k\SS(n)}$. Finally, according to these definitions, we have a square 
\[  
\begin{tikzcd}
    {[U, U]}
    \arrow[d, "\sim" {anchor=south, rotate=90, inner sep=.5mm}, swap]  
    \arrow[r, "{[b, U]}"] 
    &
    {[U^n, U]}
    \arrow[d, "\sim" {anchor=north, rotate=90, inner sep=.5mm}] 
    \\
     U(\ksbar) 
    \arrow[ur, Rightarrow, "\sim" {rotate=20, inner sep=1mm}]
    \arrow[r, swap, "U(\beta)"] 
    &
    U(\overline{k\SS(n)})
\end{tikzcd}
 \]
that commutes up to a 2-cell isomorphism, using \cref{thm:schur_versus_poly}. Chasing an object $F \in [U, U]$ both ways around the square, this says precisely that the co-operation $\beta$ corresponds to the assignment $F \mapsto F \circ b$, as stated for each case in the theorem. \end{proof}

\section{Plethories, plethysm, and 2-plethories}
\label{sec:plethysm}

We have defined a birig to be a rig $B$ together with a lifting $\Phi_B$ of the representable functor $\Rig(B, -) \maps \Rig \to \Set$ through the forgetful functor $U \maps \Rig \to \Set$.  We have an analogous notion of 2-birig. We now present another viewpoint on these notions, which paves the way for a simple definition of rig-plethory, and then of 2-plethory: that a birig is simply an endofunctor
\[
    \Phi \maps \Rig \to \Rig
\]
that is a right adjoint. None of this depends on special features of $\Rig$, beyond the fact that the forgetful functor $U \maps \Rig \to \Set$ is monadic. Thus, we might as well work more generally.

\subsection{$M$-plethories}
\label{sec:M-plethories}

For this section, let $M$ be a monad on $\Set$ and $M\Alg$ its Eilenberg--Moore category, with free-forgetful adjunction as below.
\[
\begin{tikzcd}
    \Set
    \arrow[r, bend left, "F", pos = 0.55]
    \arrow[r, phantom, "\bot", pos = 0.6]
    &
    M\Alg
    \arrow[l, bend left, "U", pos = 0.45]
\end{tikzcd}\]
In order to define $M$-plethories we introduce $M$-bialgebras. The two monads to keep in mind are those for rings and rigs. When $M$ is the monad for rings an $M$-bialgebra will be a biring, and when $M$ is the monad for rigs an $M$-bialgebra will be a birig.

\begin{defn}
    An \define{$M$-bialgebra} is an $M$-algebra $B$ equipped with a lift $\Phi_B$ of the functor $M\Alg(B,-) \maps M\Alg \to \Set$ through $U$:
    \[
    \begin{tikzcd}[column sep = large]
        &
        M\Alg
        \arrow[d, "U"]
        \\
        M\Alg
        \arrow[ur, "\Phi_B"]
        \arrow[ur, swap, ""{name = phi}, phantom, bend right = 5, pos = 0.6]
        \arrow[r, swap, "{M\Alg(B, -)}"]
        \arrow[from = phi, r, Rightarrow, "\sim", sloped]
        &
        \Set.
    \end{tikzcd}
    \] 
\end{defn}

Two other equivalent definitions of $M$-bialgebra will often be more useful. For these we need some lemmas. 

\begin{lem}
\label{lem:rightrepable}
    If a functor $G \maps \C \to \Set$ is a right adjoint, then it is representable. Moreover, $G$ is represented by $F(1)$, where $F$ is left adjoint to $G$.
\end{lem}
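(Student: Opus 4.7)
The plan is to construct the representing isomorphism by chaining together two natural isomorphisms: the tautological $X \cong \Set(1,X)$ for any set $X$, and the hom-set isomorphism supplied by the adjunction $F \dashv G$.

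First, I would fix a left adjoint $F \maps \Set \to \C$ to $G$, and for each object $c \in \C$ write down the composite
\[
    G(c) \;\cong\; \Set(1, G(c)) \;\cong\; \C(F(1), c),
\]
where the first isomorphism is the familiar bijection sending an element $g \in G(c)$ to the function $1 \to G(c)$ picking out $g$, and the second is the adjunction bijection. Both of these are well-known to be natural in $c$, so their composite is a natural isomorphism $G \cong \C(F(1), -)$, which is exactly the statement that $G$ is representable with representing object $F(1)$.

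The only subtlety worth flagging is naturality of the first isomorphism, but this follows immediately because post-composition with $G(f) \maps G(c) \to G(c')$ commutes with the evaluation-at-the-unique-element map, so the square
\[
\begin{tikzcd}
    \Set(1, G(c)) \arrow[r] \arrow[d, swap, "\Set(1, G(f))"] & G(c) \arrow[d, "G(f)"] \\
    \Set(1, G(c')) \arrow[r] & G(c')
\end{tikzcd}
\]
commutes for every $f \maps c \to c'$. Naturality of the second isomorphism is the defining property of the hom-set adjunction. There is no real obstacle here; the lemma is essentially a two-step Yoneda-style calculation, and the entire argument fits comfortably in a few lines once the adjunction is named.
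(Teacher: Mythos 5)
Your proof is correct and follows exactly the same two-step chain of natural isomorphisms used in the paper: $G(c) \cong \Set(1, G(c)) \cong \C(F(1), c)$, with the first being the tautological bijection and the second the adjunction isomorphism. The extra paragraph spelling out naturality of the first isomorphism is fine but adds nothing beyond what the paper takes as given.
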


\begin{proof}
    For any such functor $G \maps \C \to \Set$ with a left adjoint $F$, we have natural isomorphisms 
    \[
        G \cong \Set(1, G(-)) \cong \C(F(1), -)
    \]
    (where the first exists for any functor with codomain $\Set$), so that $F(1)$ is a representing object for $G$. 
\end{proof}

\begin{lem} 
\label{lem:repr}
    A right adjoint endofunctor $\Phi \maps M\Alg \to M\Alg$ determines, uniquely up to isomorphism, an $M$-algebra $B$ which carries an $M$-bialgebra structure with lift given by $\Phi$. 
\end{lem}

\begin{proof}
    The composite $U\Phi$ is a right adjoint. \cref{lem:rightrepable} then tells us that there is an $M$-algebra $B$ such that $U\Phi \cong M\Alg(B,-)$, which is precisely the condition of $\Phi$ lifting $M\Alg(B,-)$ through $U$.
\end{proof}

\begin{thm}
\label{thm:liftadjoint}
    Any lift $\Phi_B \maps M\Alg \to M\Alg$ of a representable $M\Alg(B, -) \maps M\Alg \to \Set$ is a right adjoint. 
\end{thm}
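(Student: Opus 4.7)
The plan is to construct a left adjoint $\Psi_B \maps M\Alg \to M\Alg$ to $\Phi_B$ explicitly, exploiting the representability of $M\Alg(B,-)$ together with the canonical presentation of any $M$-algebra as a reflexive coequalizer of free algebras.

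First, I would observe that the composite $U\Phi_B \cong M\Alg(B,-)$ has a left adjoint, namely the copower functor $(-) \cdot B \maps \Set \to M\Alg$ sending $S$ to $\coprod_S B$; this functor exists because $M\Alg$ is cocomplete (being monadic over $\Set$). Moreover, if a left adjoint $\Psi_B$ to $\Phi_B$ exists then $\Psi_B \circ F \dashv U \circ \Phi_B$, so by uniqueness of adjoints we are forced to have $\Psi_B(FS) \cong S \cdot B$ for every set $S$. This pins down $\Psi_B$ on free $M$-algebras.

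Second, I would extend $\Psi_B$ to all of $M\Alg$ using the standard monadic resolution. Every $M$-algebra $A$ arises canonically as the reflexive coequalizer
\[
    FUFUA \rightrightarrows FUA \longrightarrow A
\]
built from the counit of $F \dashv U$. Since a left adjoint must preserve coequalizers, I define
\[
    \Psi_B(A) \;:=\; \mathrm{coeq}\bigl((UFUA) \cdot B \rightrightarrows (UA) \cdot B\bigr)
\]
in $M\Alg$. This coequalizer exists because $M\Alg$ is cocomplete, and it is functorial in $A$ because the monadic resolution is.

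Third, I would verify the adjunction $\Psi_B \dashv \Phi_B$ by combining the universal property of the defining coequalizer, the adjunction $(-) \cdot B \dashv M\Alg(B,-) \cong U\Phi_B$, the free adjunction $F \dashv U$, and the fact that applying $M\Alg(-, \Phi_B(A'))$ to the canonical presentation of $A$ yields an equalizer:
\begin{align*}
    M\Alg(\Psi_B(A), A')
    &\cong \mathrm{eq}\bigl(M\Alg((UA)\cdot B, A') \rightrightarrows M\Alg((UFUA)\cdot B, A')\bigr) \\
    &\cong \mathrm{eq}\bigl(\Set(UA, U\Phi_B(A')) \rightrightarrows \Set(UFUA, U\Phi_B(A'))\bigr) \\
    &\cong \mathrm{eq}\bigl(M\Alg(FUA, \Phi_B(A')) \rightrightarrows M\Alg(FUFUA, \Phi_B(A'))\bigr) \\
    &\cong M\Alg(A, \Phi_B(A'))
\end{align*}
naturally in $A$ and $A'$.

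The main obstacle is pure bookkeeping: checking that the defining coequalizer really depends functorially on $A$ and that the displayed chain of isomorphisms is natural in both variables. A slicker alternative that bypasses the bookkeeping is to invoke Dubuc's adjoint triangle theorem for the commutative triangle formed by $\Phi_B$, $U$, and $U\Phi_B \maps M\Alg \to \Set$: since $U$ is monadic, $U\Phi_B$ is a right adjoint, and $M\Alg$ admits reflexive coequalizers, $\Phi_B$ must itself be a right adjoint.
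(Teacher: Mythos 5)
Your overall strategy—construct $\Psi_B$ explicitly via a coequalizer of copowers, or alternatively invoke Dubuc's adjoint triangle theorem—is exactly the approach the paper takes; the paper also builds the coequalizer and then defers the remaining verification to the adjoint lifting theorem. Your Dubuc fallback at the end is sound and closes the argument.

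However, there is a gap in the explicit construction in your second step. You observe that uniqueness of adjoints forces $\Psi_B \circ F \cong (-)\cdot B$, and then you write
\[
\Psi_B(A) := \mathrm{coeq}\bigl((UFUA)\cdot B \rightrightarrows (UA)\cdot B\bigr)
\]
as if the two arrows come for free from the monadic resolution $FUFUA \rightrightarrows FUA$. One of them does: $FU\varepsilon_A = F(\alpha)$, where $\alpha \maps MUA \to UA$ is the algebra structure, so applying $(-)\cdot B$ gives $\alpha\cdot B$. But the other arrow, $\varepsilon_{FUA}$, is a map between free algebras that is \emph{not} of the form $F(g)$ for any set map $g$ (its underlying function is $\mu_{UA}$, not of the form $Mg$). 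Knowing $\Psi_B \circ F$ as a functor on $\Set$ therefore does not tell you what $\Psi_B(\varepsilon_{FUA})$ should be. The missing ingredient is precisely the data of the lift: the fact that $\Phi_B$ lifts $M\Alg(B,-)$ through $U$ endows $M\Alg(B,-)$ with a left $M$-module structure $\theta$, equivalently endows $(-)\cdot B$ with a right $M$-module structure $\xi \maps M(-)\cdot B \to (-)\cdot B$, and it is $\xi_{UA}$ that plays the role of $\Psi_B(\varepsilon_{FUA})$ in the coequalizer. Your chain of equalizers in the third step does implicitly pick out the right pair of maps—the map $f \mapsto \beta \circ Mf$ on the $\Set$ side transposes to precomposition with $\xi_{UA}$—but this is exactly the content you need to surface and justify, not assume. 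As written, the construction reads as if no structure beyond mere representability of $U\Phi_B$ were used, yet the lift data is doing essential work. The paper makes this explicit (its facts (2)–(3)), and you should too before the equalizer computation can be taken at face value.
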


\begin{proof}
    The proof uses some well-known facts:
    \begin{enumerate}
        \item Every representable functor $M\Alg(B, -) \maps M\Alg \to \Set$ has a left adjoint, denoted $B \cdot -$. This takes a set $X$ to the coproduct $B \cdot X$ in $M\Alg$ of an $X$-indexed collection of copies of $B$. 
        \item A lift $\Phi_B$ of $M\Alg(B, -)$ through $U$ is precisely equivalent to a left $M$-algebra structure on the representable $M\Alg(B, -)$: that is, a natural transformation
        \[
        \theta \maps M \circ M\Alg(B, -) \to M\Alg(B, -)
        \]
        obeying the usual axioms for $M$-algebras (this fact is the 2-universal property of the Eilenberg--Moore construction). 
        \item Giving such an $M$-algebra structure $\theta$ is equivalent to giving a morphism of monads 
        \[
        M \to M\Alg(B, -) \circ (B \cdot -) = M\Alg(B, B \cdot -)
        \]
        and it is also equivalent to giving a \emph{right} $M$-algebra structure 
        \[
        \xi \maps (B \cdot -) \circ M \to (B \cdot -)
        \]
        which may also be written as $\xi \maps B \cdot M- \to B \cdot -$. The counit $\varepsilon$ of the adjunction $(B \cdot -) \dashv M\Alg(B, -)$ coequalizes the following parallel pair:
        \[
        \begin{tikzcd}
            B \cdot M \circ M\Alg(B, -)  
            \arrow[r, shift left=0.5ex, "{\xi M\Alg(B, -)}"] 
            \arrow[r, shift right=0.5ex, swap, "B \cdot \theta"] 
            & [3em] 
            B \cdot M\Alg(B, -) 
            \arrow[r, rightarrow, "\epsilon"] 
            & 
            \mathrm{Id} 
        \end{tikzcd}
        \]
        \item $M\Alg$ is cocomplete \cite[Theorem 4.3.5]{BorceuxII}, and in particular has reflexive coequalizers.
    \end{enumerate}
    
    Now, the existence of a left adjoint $\Psi_B$ of $\Phi_B$ may be exhibited directly as follows. It suffices to show that for any $M$-algebra $R$, the functor $M\Alg(R, \Phi_B-) \maps M\Alg \to \Set$ is representable; the representing object $S$ is then the value $\Psi_B R$. Let $\alpha \maps M UR \to UR$ denote the $M$-algebra structure on $UR$ afforded by the structure of $R$ as an $M$-algebra. We form $S$ as the coequalizer of the following reflexive pair:
    \[
    \begin{tikzcd}
        B \cdot MUR  
        \arrow[r, rightarrow, shift left=0.5ex, "\xi UR"] 
        \arrow[r, rightarrow, shift right=0.5ex, swap, "B \cdot \alpha"] 
        & 
        B \cdot UR.
    \end{tikzcd}
    \]
    (This coequalizer might be suggestively written as $(B \cdot -) \circ_M UR$, recalling the construction of a tensor product of a right $M$-module $(B \cdot -)$ with a left $M$-module $UR$.) The rest follows the proof of the adjoint lifting theorem \cite[Theorem 4.5.6 and Exercise 4.8.6]{BorceuxII}, although we have given enough hints that the patient reader could work out the details.
\end{proof} 

We defined an $M$-bialgebra to be an $M$-algebra $B$ equipped with a lift $\Phi_B$ of the functor $M\Alg(B,-)$ it represents. \cref{thm:liftadjoint} then tells us that $\Phi_B$ is necessarily a right adjoint. Conversely, \cref{lem:repr} tells us that any right adjoint $M\Alg \to M\Alg$ is a lift of a representable, and thus determines an $M$-bialgebra. If one takes morphisms of $M$-bialgebras as pointing in the same direction as morphisms of their underlying $M$-algebras $B$ (hence in the direction opposite to natural transformations between representable functors $M\Alg(B, -)$, where $B$ is in the contravariant argument), then that direction is in alignment with identifying $M$-bialgebras with left adjoint endofunctors on $M\Alg$. Thus, with this direction of morphisms of $M$-bialgebras, we have as a corollary of \cref{lem:repr} and \cref{thm:liftadjoint} the following result.  

\begin{cor}
\label{cor:leftadjoint}
We have an equivalence of categories
\[
    M\Bialg \simeq \LAdj(M\Alg, M\Alg).
\] 
\end{cor}

The category $\LAdj(M\Alg, M\Alg)$ is a monoidal category where the monoidal product is endofunctor composition. This monoidal product transports across the equivalence, making $M\Bialg$ into monoidal category. We denote the monoidal product of two $M$-bialgebras $B$ and $C$ by $B \odot C$. The identity functor on $M\Alg$ is the unit for endofunctor composition, and by \cref{lem:rightrepable} this identity functor corresponds to the $M$-bialgebra $F(1)$, so the monoidal unit for $\odot$ must be $F(1)$. 

Given an $M$-bialgebra $B$, we let $B_o$ denote its underlying $M$-algebra, and temporarily write $F(1)$ to mean the above $M$-bialgebra. Write $\Psi_B$ for the left adjoint of the endofunctor lift $\Phi_B$. 

\begin{lem}
   There are natural isomorphisms 
    \[  B_o \cong (B \odot  F(1))_o \cong \Psi_B(F(1)_o) .\]
\end{lem}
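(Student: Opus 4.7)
The plan is to prove the two isomorphisms separately and then splice them. The first, $B_o \cong (B \odot F(1))_o$, will follow on formal grounds once we check that $F(1)$ is the unit object for $\odot$. The second, $(B \odot F(1))_o \cong \Psi_B(F(1)_o)$, I would derive by combining the first with an independent identity $B_o \cong \Psi_B(F(1)_o)$, which is itself a one-line Yoneda computation using $\Psi_B \dashv \Phi_B$.

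For the unit claim, I would transport along the monoidal equivalence $M\Bialg \simeq \LAdj(M\Alg, M\Alg)$ already in hand, which sends a bialgebra $B$ to $\Psi_B$ and sends $\odot$ to endofunctor composition. The monoidal unit on the right is the identity endofunctor $\mathrm{Id}_{M\Alg}$. By \cref{lem:rightrepable}, $\mathrm{Id}_{M\Alg}$ is represented by $F(1)$, so via \cref{cor:repr} the bialgebra attached to $\mathrm{Id}_{M\Alg}$ has underlying algebra $F(1)$; that is, the monoidal unit for $\odot$ is the bialgebra $F(1)$ itself. The unit law then gives $B \odot F(1) \cong B$ in $M\Bialg$, which on underlying algebras yields $B_o \cong (B \odot F(1))_o$.

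For the identity $\Psi_B(F(1)_o) \cong B_o$, I would apply the Yoneda lemma to the chain of natural isomorphisms
\[
M\Alg(\Psi_B(F(1)_o), R) \cong M\Alg(F(1)_o, \Phi_B R) \cong U(\Phi_B R) \cong M\Alg(B_o, R),
\]
where the first uses $\Psi_B \dashv \Phi_B$, the second uses $M\Alg(F(1)_o, -) \cong U$ from the free-forgetful adjunction, and the third uses the defining property that $\Phi_B$ lifts $M\Alg(B_o, -)$ through $U$. Splicing this with the unit isomorphism closes the required chain. No step here is genuinely difficult; the only thing to be careful about is the triple role of $F(1)$ — as underlying algebra of the unit bialgebra, as representing object for $U$, and as argument to $\Psi_B$ — and to observe that naturality in $B$ follows automatically from the naturality of each isomorphism above in its respective argument.
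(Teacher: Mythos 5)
Your proof is correct and amounts to the same formal adjunction argument as the paper's, just unfolded: the paper passes from the defining isomorphism of right adjoints $U\Phi_B \cong M\Alg(B_o,-)$ to its mate $\Psi_B\circ F \cong B_o\cdot(-)$ and then evaluates at $1$, whereas your chain
\[
M\Alg(\Psi_B F(1),R)\cong M\Alg(F(1),\Phi_B R)\cong U\Phi_B R\cong M\Alg(B_o,R)
\]
followed by Yoneda is precisely that mate computation carried out at the level of hom-sets. Your derivation of $B\odot F(1)\cong B$ from $F(1)$ being the monoidal unit is fine but is already established in the paragraph preceding the lemma, which is why the paper's proof only records the $\Psi_B(F(1)_o)\cong B_o$ part.
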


\begin{proof}
\label{lem:isomorphisms}
    The mate of the natural isomorphism
    \[U \circ \Phi_B \cong M\Alg(B_o, -)\]
    is a natural isomorphism 
    \[\Psi_B \circ F \cong B_o \cdot -\]
    where $U \cong M\Alg( F(1), -)$ has left adjoint $F= F(1) \cdot -$. Applying
    \[\Psi_B \circ ( F(1)_o \cdot -) \cong B_o \cdot - \]
    to the terminal set $1$ we obtain
    \[\Psi_B( F(1)_o) \cong B_o.\qedhere\]
\end{proof}

\begin{prop}
\label{prop:odot}
    Let $B$ and $C$ be $M$-bialgebras. Then there is a natural $M$-algebra isomorphism 
    \[ \alpha_{B,C} \maps (B \odot C)_o \xrightarrow{\,\sim\,} \Psi_B(C_o) .\]
\end{prop}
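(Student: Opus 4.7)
The plan is to reduce everything to the previous lemma ($B_o \cong \Psi_B(F(1)_o)$, where $F(1)$ denotes the monoidal unit of $(M\Bialg, \odot)$) by passing through the equivalence $M\Bialg \simeq \LAdj(M\Alg, M\Alg)$ that sends a bialgebra $B$ to its left adjoint $\Psi_B$. Under this equivalence the monoidal structure $\odot$ on $M\Bialg$ is, by construction, transported from endofunctor composition on $\LAdj(M\Alg, M\Alg)$, so the first step is to record the natural isomorphism
\[
    \Psi_{B \odot C} \;\cong\; \Psi_B \circ \Psi_C,
\]
matching the convention stated in the introduction that $\Phi_{B' \odot B} \cong \Phi_B \circ \Phi_{B'}$ (taking left adjoints reverses composition order, and both conventions agree when we identify $M\Bialg$ with $\LAdj(M\Alg, M\Alg)$ rather than $\RAdj(M\Alg, M\Alg)$).

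Next I would apply the previous lemma three times. First, applied to the bialgebra $B \odot C$, it yields
\[
    (B \odot C)_o \;\cong\; \Psi_{B \odot C}(F(1)_o).
\]
Second, applied to $C$ it yields $C_o \cong \Psi_C(F(1)_o)$; applying $\Psi_B$ to this isomorphism gives
\[
    \Psi_B(C_o) \;\cong\; \Psi_B(\Psi_C(F(1)_o)) \;=\; (\Psi_B \circ \Psi_C)(F(1)_o).
\]
Composing these with the isomorphism $\Psi_{B \odot C} \cong \Psi_B \circ \Psi_C$ evaluated at $F(1)_o$ produces the desired isomorphism
\[
    \alpha_{B,C}\maps (B \odot C)_o \xrightarrow{\,\sim\,} \Psi_B(C_o).
\]

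Naturality in $B$ and $C$ follows formally, since each isomorphism in the chain is natural: the lemma's isomorphism $D_o \cong \Psi_D(F(1)_o)$ is natural in $D$ (it is the mate of the natural isomorphism $U\Phi_D \cong M\Alg(D_o,-)$ evaluated at $F(1)$), and the identification $\Psi_{B \odot C} \cong \Psi_B \circ \Psi_C$ is natural in both arguments because it is the component of the monoidal equivalence $M\Bialg \simeq \LAdj(M\Alg,M\Alg)$ of monoidal categories.

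The only real subtlety is bookkeeping around the variance in the equivalence $M\Bialg \simeq \LAdj(M\Alg,M\Alg)$: one must check that the convention for $\odot$ chosen in the text (and compatible with the earlier ring-plethory convention $\Phi_{B' \odot B} \cong \Phi_B \circ \Phi_{B'}$) indeed produces $\Psi_{B \odot C} \cong \Psi_B \circ \Psi_C$ rather than the swapped order, since the wrong choice would give the formula $(B \odot C)_o \cong \Psi_C(B_o)$ instead. Once this directional consistency is fixed, the proof is essentially a one-line chain of canonical isomorphisms.
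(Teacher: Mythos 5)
Your proof is correct and is essentially identical to the paper's: both apply the previous lemma to $B \odot C$ and to $C$, and link the two via the transport-of-structure isomorphism $\Psi_{B \odot C} \cong \Psi_B \circ \Psi_C$, which the paper states explicitly just after this proposition. Your extra paragraph on the variance/ordering convention is a careful observation but not a departure; the paper's definition of $\odot$ via $\LAdj(M\Alg, M\Alg)$ already fixes $\Psi_{B \odot C} \cong \Psi_B \circ \Psi_C$, consistent with what you verified.
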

\begin{proof}
    This isomorphism is the composite of natural isomorphisms we have seen:
    \[
        (B \odot C)_o \cong \Psi_{B \odot C}( F(1)) \cong \Psi_B \Psi_C( F(1)) \cong \Psi_B(C_o) .\qedhere 
    \]
\end{proof}

With this proposition in hand, for an $M$-bialgebra $B$ there is no real harm in using $B \odot C$ both for the monoidal product (if $C$ is an $M$-bialgebra) and for the value of the left adjoint $\Psi_B(C)$ (if $C$ is merely an $M$-algebra), letting the meaning be inferred from context. In fact, what we have is an actegory structure whereby the monoidal category $M\Bialg$ acts on $M\Alg$ via a functor
\[
\odot \maps M\Bialg \times M\Alg \to M\Alg, 
\] 
so that if $B$ and $C$ are $M$-bialgebras and $R$ is an $M$-algebra, then there is a coherent compatibility constraint 
\[
(B \odot C) \odot R \cong B \odot (C \odot R)
\]
which is just another way of restating the way in which the monoidal product $\odot$ on $M\Bialg$ was defined by transport of structure: 
\[ 
\Psi_{B \odot C} \cong \Psi_B \circ \Psi_C. 
\]
Summarizing \cref{lem:repr}, \cref{thm:liftadjoint}, \cref{cor:leftadjoint}, and \cref{prop:odot}, we have:

\begin{cor}
\label{thm:MBialg}
    If $M$ is a monad on $\Set$, then the following are equivalent as monoidal categories:
    \begin{enumerate}
        \item $(M\Bialg, \odot)$
        \item $(\LAdj(M\Alg, M\Alg), \circ)$
        \item $(\RAdj(M\Alg,M\Alg), \circ)\op$.
    \end{enumerate}
\end{cor}

Let us give a more explicit description of $B \odot R \cong \Psi_B(R)$ in the special case where $M$ is the monad for commutative rings. This construction can be extracted from the proof of \cref{thm:liftadjoint} and appears in a number of places in the literature, e.g., Borger--Wieland \cite{Plethystic} and  Tall--Wraith \cite{TallWraith}, so we simply state the result here. 

\begin{prop}
\label{prop:odotexplicit} 
For a biring $B$, introduce Sweedler notation for the comultiplication and coaddition as follows:
\[
\Delta_B^{+}(b) = \sum_i b_i^{(1)} \otimes b_i^{(2)}, \qquad
\Delta_B^{\times}(b) = \sum_i b_i^{[1]} \otimes b_i^{[2]} .
\]
Then for any ring $R$, $\Psi_B(R)$ is the ring with generators $b \odot r$ subject to the following relations: 
\[
bb' \odot r = (b \odot r)(b' \odot r) \qquad (b + b') \odot r = (b \odot r) + (b' \odot r), 
\]
\[b \odot (r+r') = \Delta_B^{+}(r, r') \coloneqq \sum_i (b_i^{(1)} \odot r)(b_i^{(2)} \odot r'),
\]
\[ 
b \odot (rr') = \Delta_B^{\times}(r, r') \coloneqq \sum_i (b_i^{[1]} \odot r)(b_i^{[2]} \odot r'), \]
\[0 \odot r = 0, \qquad 1 \odot r = 1, \qquad b \odot -r = \nu(b) \odot r\]
\[b \odot 1 = i(b), \qquad b \odot 0 = o(b).\]
\end{prop}

We now define plethories for an arbitrary monad $M$:

\begin{defn}
\label{def:Mplethory}
    An \define{$M$-plethory} is a monoid in the monoidal category $(M\Bialg, \odot)$.
\end{defn} 

When $M$ is the monad for rings, Borger and Wieland \cite{Plethystic} call an $M$-plethory just a `plethory', but we shall use the term \define{ring-plethory}. We are also interested in the case where $M$ is the monad for rigs; then we call an $M$-plethory a \define{rig-plethory}. The simplest $M$-plethory of all is given by the identity functor on $M\Alg$ viewed as right adjoint comonad. The underlying $M$-algebra of this $M$-plethory is $F(1)$. When $M$ is the monad for rings this gives the ring-plethory structure on $\Z[x]$. When $M$ is the monad for rigs it gives the rig-plethory structure on $\N[x]$. 

By \cref{thm:MBialg} we can also think of an $M$-plethory $B$ as a left adjoint monad $\Psi_B \maps M\Alg \to M\Alg$ or as a right adjoint comonad $\Phi_B \maps M\Alg \to M\Alg$, though in the latter case we need to remember that a morphism of $M$-plethories goes in the \emph{opposite} direction from a comonad morphism. We thus have three perspectives on $M$-plethories. 
For example, in \cref{thm:lambda_plethory} we describe the ring-plethory structure on the ring of symmetric functions, $\Lambda$. In the first perspective, we think of this ring-plethory as a monoid in $(\Biring, \odot)$. In the second, we think of it as a left adjoint monad $\Psi_\Lambda \maps \Ring \to \Ring$. Here $\Psi_\Lambda(R) \cong \Lambda \odot R$ is called the `free $\lambda$-ring' on $R$. In the third, we think of this ring-plethory as a right adjoint comonad $\Phi_\Lambda \maps \Ring \to \Ring$. Here $\Phi_\Lambda(R)$ is called the ring of `Witt vectors' of $R$.

To actually obtain the ring-plethory structure on $\Lambda$, we need yet a fourth perspective on $M$-plethories, one which makes more explicit contact with the operations of $M$. We develop this fourth perspective now. 

As we have noted, an $M$-bialgebra structure is equivalent to an $M$-algebra $B$ together with a lift $\Phi$ of $M\Alg(B, -) \maps M\Alg \to \Set$ through the forgetful functor $U \maps M\Alg \to \Set$. Thus $\Phi(C)$ consists of a hom-set $\hom(B, C)$ endowed with an $M$-algebra structure induced from the bialgebra structure on $B$. In this way, for $M$-algebras $A, C$ we may speak of $M$-algebra maps $A \to \hom(B, C)$; in particular, it makes sense to speak of $M$-algebra maps $B \to \hom(B, B)$, formalized by maps $h \maps B \to \Phi(B)$. At the underlying set level, a function $B \to \hom(B, B)$ corresponds to a binary operation $B \times B \to B$.

The guiding idea behind the fourth perspective is to see an $M$-plethory as an $M$-bialgebra $B$ together with an $M$-algebra map $h \maps B \to \Phi(B)$ whose corresponding binary operation $B \times B \to B$, called \define{plethysm}, is associative and has a unit $e \maps 1 \to B$. However, the situation is slightly richer than that: we need $h \maps B \to \Phi(B)$ to correspond to a \emph{bialgebra} map $m \maps B \odot B \to B$. We also need the unit $e \maps 1 \to B$ to correspond to a bialgebra map $F(1) \to B$. As we shall see, an $M$-plethory is precisely equivalent to an algebra map $h \maps B \to \Phi(B)$ and unit satisfying these conditions. (In general $\Phi(B)$ does not carry $M$-bialgebra structure, so we cannot simply ask that $h \maps B \to \Phi(B)$ be a bialgebra map.)

We shall go further, though, by writing out these conditions explicitly in terms of co-operations. In general, an $M$-algebra can be described as a set $A$ equipped with a collection of $J$-ary operations
\[    \theta \maps A^J \to A \]
obeying certain equations. Here $J$ potentially ranges over all sets, though finite sets suffice if the monad $M$ comes from a Lawvere theory, as in our main examples of interest. Similarly, an $M$-bialgebra can be described as an $M$-algebra $B$ equipped with $J$-ary `co-operations' obeying the same sort of equations. These co-operations are $M$-algebra morphisms
\[    [\theta] \maps B \to J \cdot B \]
where $J \cdot B$ is the coproduct in $M\Alg$ of $J$ copies of $B$. In \cref{sec:intro} we described some of these co-operations in the case of the biring $\Z[x]$, and at the start of \cref{sec:2-birig} we worked them out in a principled way for the birig $\N[x]$. We now describe how an $M$-plethory structure interacts with these co-operations. 

Let us take as our starting point that an $M$-plethory consists of a bialgebra $(B, \Phi)$ together with a comonad structure 
\[
    \delta \maps \Phi \To \Phi\Phi, \qquad \varepsilon \maps \Phi \To 1. 
\]
These maps correspond to bialgebra morphisms $B \odot B \to B$ and $F(1) \to B$ between the representing objects. 

\begin{lem}
\label{lem:M-plethory_h}
    A natural transformation $\zeta \maps U\Phi \To U\Phi\Phi$ is precisely equivalent to an $M$-algebra map $h \maps B \to \Phi(B)$. 
\end{lem}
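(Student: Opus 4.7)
The proof is a direct application of the Yoneda lemma, so the main task is to identify the right representables on both sides and observe that everything falls into place.

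The plan is as follows. First, use the defining property of the bialgebra structure: the lift $\Phi$ of $M\Alg(B, -)$ through $U$ comes equipped with a natural isomorphism $U\Phi \cong M\Alg(B, -)$. Whiskering this isomorphism with $\Phi$ on the right yields
\[
U\Phi\Phi \;\cong\; M\Alg(B, \Phi(-)),
\]
an isomorphism of $\Set$-valued functors on $M\Alg$. Thus specifying a natural transformation $\zeta \maps U\Phi \To U\Phi\Phi$ is the same as specifying a natural transformation
\[
\widetilde\zeta \maps M\Alg(B, -) \To M\Alg(B, \Phi(-)).
\]

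Second, apply the Yoneda lemma to the target functor. Natural transformations from the representable $M\Alg(B, -)$ to any functor $G \maps M\Alg \to \Set$ correspond bijectively to elements of $G(B)$; taking $G = M\Alg(B, \Phi(-))$ gives a bijection between natural transformations $\widetilde\zeta$ and elements of $M\Alg(B, \Phi(B))$, i.e.\ $M$-algebra maps $h \maps B \to \Phi(B)$.

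Third, I would make the correspondence explicit in both directions so the rest of the paper can use it unambiguously. Given $\zeta$, the corresponding algebra map is $h = \widetilde\zeta_B(1_B)$, where $1_B \in M\Alg(B, B)$ is the identity. Conversely, given $h \maps B \to \Phi(B)$, define $\widetilde\zeta_C(f) = \Phi(f) \circ h$ for $f \in M\Alg(B, C)$; naturality in $C$ is immediate from the functoriality of $\Phi$, and the two constructions are mutually inverse by the standard Yoneda calculation.

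There is no real obstacle here; the entire content is that the target of $\zeta$ is (after rewriting via the defining isomorphism) representable-valued, so Yoneda applies. The only small point to flag is that $\Phi(B)$ really is an $M$-algebra (not just a set), so $h$ can be required to be an algebra map rather than merely a function — but this is automatic because the Yoneda parameter lives in $M\Alg(B, \Phi(B))$, not in the underlying set $U\Phi(\Phi(B))$ viewed plainly, and these two descriptions agree under the natural isomorphism $U\Phi \cong M\Alg(B, -)$.
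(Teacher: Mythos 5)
Your proof is correct and takes essentially the same route as the paper: rewrite $U\Phi\Phi \cong M\Alg(B, \Phi(-))$ via the defining isomorphism $U\Phi \cong M\Alg(B, -)$, then apply the Yoneda lemma. The explicit formulas you give in both directions also match what the paper records immediately after its proof.
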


\begin{proof}
    To say $\Phi$ is a lifting of $M\Alg(B, -) \maps M\Alg \to \Set$ means precisely that we have a representation $U\Phi \cong M\Alg(B, -)$. It follows that $U\Phi\Phi \cong M\Alg(B, -) \circ \Phi = M\Alg(B, \Phi(-))$, so that a transformation $\zeta \maps U\Phi \to U\Phi\Phi$ may be identified with a transformation 
    \[
        \zeta \maps M\Alg(B, -) \to M\Alg(B, \Phi(-)),
    \]
    but this corresponds to an algebra map $h \maps B \to \Phi(B)$, by the Yoneda lemma. 
\end{proof}

In more detail, the transformation $\zeta$ is retrieved from $h$ by the formula 
\[
    \zeta_C(f) = (B \xrightarrow{\;h\;} \Phi(B) \xrightarrow{\Phi(f)} \Phi(C)),
\]
or, in terms of hom-sets, by precomposing the $\Phi$-functoriality structure with $h$: 
\[
    \zeta_C = (M\Alg(B, C) \to M\Alg(\Phi B, \Phi C) \xrightarrow{M\Alg(h, 1)} M\Alg(B, \Phi C)). 
\]

Before stating our next result, we introduce the following construction: for a given algebra map $h \maps B \to \Phi(B)$ and any set $J$, we let $h_J \maps J \cdot B \to \Phi(J \cdot B)$ be the unique algebra map such that for every $j \in J$, and coproduct coprojection $i_j \maps B \to J \cdot B$, the map $h_J \circ i_j$ equals the composite 
\[ 
    B \xrightarrow{h} \Phi(B) \xrightarrow{\Phi(i_j)} \Phi(J \cdot B).
\]

\begin{lem}
\label{lem:M-plethory_square}
    A natural transformation $\delta \maps \Phi \To \Phi\Phi$ is precisely equivalent to an $M$-algebra map $h \maps B \to \Phi(B)$ such that the diagram 
    \[
    \begin{tikzcd}
        B 
        \arrow[r, "{[\theta]}"] 
        \arrow[d, "h", swap] 
        & 
        J \cdot B \arrow[d, "h_J"] 
        \\
        \Phi B 
        \arrow[r, "{\Phi([\theta])}", swap] 
        & 
        \Phi(J \cdot B)
    \end{tikzcd}
    \]
    commutes for every co-operation $[\theta] \maps B \to J \cdot B$ of the $M$-bialgebra $B$. 
\end{lem}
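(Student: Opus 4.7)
The plan is to build on \cref{lem:M-plethory_h} by identifying the extra structure that distinguishes a natural transformation $\delta \maps \Phi \To \Phi\Phi$ of $M\Alg$-valued functors from a natural transformation $\zeta \maps U\Phi \To U\Phi\Phi$ of $\Set$-valued functors. Since the forgetful $U \maps M\Alg \to \Set$ is faithful, a natural transformation of the former kind is precisely a natural transformation of the latter kind whose components $\zeta_C \maps U\Phi(C) \to U\Phi\Phi(C)$ happen to lift to $M$-algebra homomorphisms. So it suffices to show, assuming the bijection $\zeta \leftrightarrow h$ from \cref{lem:M-plethory_h}, that ``every $\zeta_C$ is an $M$-algebra map'' translates precisely to commutativity of the displayed square for every co-operation $[\theta]$.

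Recall $\zeta_C(f) = \Phi(f) \circ h$ for $f \maps B \to C$. The $M$-algebra structure on $U\Phi(C) = M\Alg(B, C)$ coming from the bialgebra structure on $B$ is the one described by the co-operations: for a $J$-ary operation $\theta$, the action on a tuple $(f_j)_{j \in J}$ in $M\Alg(B, C)$ is the composite $B \xrightarrow{[\theta]} J \cdot B \xrightarrow{\langle f_j \rangle_j} C$, where $\langle f_j \rangle_j \maps J \cdot B \to C$ is the copairing. Therefore $\zeta_C$ preserves the operation $\theta$ iff
\[
    \Phi\bigl(\langle f_j\rangle_j \circ [\theta]\bigr) \circ h \;=\; \bigl\langle \Phi(f_j) \circ h\bigr\rangle_j \circ [\theta]
\]
for every tuple $(f_j)_{j \in J}$.

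I would then simplify both sides. The left-hand side equals $\Phi(\langle f_j\rangle_j) \circ \Phi([\theta]) \circ h$ by functoriality of $\Phi$. For the right-hand side I would verify the key identity
\[
    \bigl\langle \Phi(f_j) \circ h \bigr\rangle_j \;=\; \Phi(\langle f_j\rangle_j) \circ h_J
\]
by precomposing both sides with each coproduct coprojection $i_j \maps B \to J \cdot B$: the left collapses to $\Phi(f_j) \circ h$, while the right unfolds as $\Phi(\langle f_j\rangle_j) \circ h_J \circ i_j = \Phi(\langle f_j\rangle_j) \circ \Phi(i_j) \circ h = \Phi(\langle f_j\rangle_j \circ i_j) \circ h = \Phi(f_j) \circ h$, using the defining property of $h_J$, functoriality, and the universal property of the coproduct. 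Cancelling the common left factor $\Phi(\langle f_j\rangle_j)$ reduces preservation of $\theta$ to $\Phi([\theta]) \circ h = h_J \circ [\theta]$, which is precisely the square in the statement.

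Both directions then follow: the square plus the above calculation shows every $\zeta_C$ is an algebra map, and conversely specializing the preservation condition to $C = J \cdot B$ with $\langle f_j\rangle_j = 1_{J \cdot B}$ (so that $\Phi(\langle f_j\rangle_j)$ becomes the identity) recovers the square. The main obstacle is simply the bookkeeping identity $\langle \Phi(f_j) \circ h\rangle_j = \Phi(\langle f_j\rangle_j) \circ h_J$, which encodes that $h_J$ really is the ``$J$-fold extension'' of $h$; once this is in hand, the rest is naturality and the universal property of $J \cdot B$.
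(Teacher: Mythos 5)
Your proof is correct and takes essentially the same route as the paper: the paper expresses ``$\zeta$ preserves operations'' as a square of natural transformations and invokes Yoneda by evaluating at $1 \in M\Alg(J\cdot B, J\cdot B)$, which is exactly your specialization to $C = J\cdot B$ with $\langle f_j\rangle_j = 1_{J\cdot B}$. Your explicit bookkeeping identity $\langle \Phi(f_j)\circ h\rangle_j = \Phi(\langle f_j\rangle_j)\circ h_J$ is what the paper's ``top horizontal arrow sends $1$ to $h_J$'' step silently relies on, so making it explicit is a small clarity gain; just note that the ``cancellation'' phrasing is shorthand, and it is your two-direction argument in the final paragraph that makes it rigorous.
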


\begin{proof}
    To have such a transformation $\delta$ between $M$-algebra-valued functors is equivalent to having the underlying transformation $\zeta = U\delta$ preserve the operations of the monad. In other words, writing 
    \[
        U\Phi \cong M\Alg(B, -), \qquad U\Phi\Phi \cong M\Alg(B, \Phi -) 
    \]
    the condition is that for every $J$-ary operation $\theta$ we have a commutative diagram: 
    \[
    \begin{tikzcd}
         M\Alg(B, -)^J 
         \arrow[r, "\zeta^J"] 
         \arrow[swap, d, "\theta"] 
         & 
         M\Alg(B, \Phi -)^J 
         \arrow[d, "\theta"] 
         \\ 
         M\Alg(B, -) 
         \arrow[r, "\zeta", swap] 
         & 
         M\Alg(B, \Phi -).
    \end{tikzcd}
    \]
    These operations are induced from co-operations $[\theta] \maps B \to J \cdot B$ in the contravariant argument, i.e.\ we use the natural isomorphism $M\Alg(B, C)^J \cong M\Alg(J \cdot B, C)$ to rewrite the last diagram into the form 
    \[
    \begin{tikzcd}
        M\Alg(J \cdot B, -) 
        \arrow[r] 
        \arrow[swap, d, "{M\Alg(\lbrack\theta\rbrack, -)}"] 
        & 
        M\Alg(J \cdot B, \Phi -) 
        \arrow[d, "{M\Alg(\lbrack\theta\rbrack, \Phi -)}"] 
        \\ 
        M\Alg(B, -) 
        \arrow[r, "\zeta", swap] 
        & 
        M\Alg(B, \Phi -) 
    \end{tikzcd}
    \]
    where $[\theta]$ denotes the corresponding co-operation. Applying the Yoneda lemma as in the previous lemma, the commutativity of this last diagram holds if and only if it holds when applied to the identity element $1 \in M\Alg(J \cdot B, J \cdot B)$. In that case, the top horizontal arrow sends this identity $1$ to $h_J$, and the right vertical arrow sends this in turn to the composite $h_J \circ [\theta]$. The left vertical arrow sends $1$ to $[\theta]$, and the bottom horizontal arrow sends this in turn to the composite $\Phi([\theta]) \circ h$. Thus commutativity of the preceding diagram is equivalent to the equation 
    \[
        h_J \circ [\theta] = \Phi([\theta]) \circ h
    \]
    and this completes the proof. 
\end{proof} 

\begin{lem}
\label{lem:M-plethory_counit}
    A morphism $\varepsilon \maps \Phi \To 1_{M\Alg}$ precisely corresponds to a bialgebra map $\eta \maps F(1) \to B$. 
\end{lem}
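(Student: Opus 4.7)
The plan mirrors the arguments used in \cref{lem:M-plethory_h} and \cref{lem:M-plethory_square}. The first step is to recognize $1_{M\Alg}$ as a bialgebra lift in its own right: since $1_{M\Alg}$ is its own left adjoint, \cref{lem:rightrepable} gives $U \circ 1_{M\Alg} = U \cong M\Alg(F(1), -)$, so $1_{M\Alg}$ is the lift $\Phi_{F(1)}$ of the representable functor corresponding to the bialgebra $F(1)$.

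Given this, a natural transformation $\varepsilon \maps \Phi \To 1_{M\Alg}$ of $M\Alg$-valued functors has an underlying $\Set$-valued transformation $U\varepsilon \maps M\Alg(B, -) \To M\Alg(F(1), -)$, which by Yoneda corresponds to a unique $M$-algebra morphism $\eta \maps F(1) \to B$. Conversely, any such $\eta$ produces by precomposition a transformation $M\Alg(B, -) \To M\Alg(F(1), -)$, and the question is when this lifts to a transformation of $M\Alg$-valued functors, i.e., when it respects the $M$-algebra operations on both sides.

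Repeating the analysis from the proof of \cref{lem:M-plethory_square}, respecting each $J$-ary operation $\theta$ is equivalent, by Yoneda applied to $1 \in M\Alg(J \cdot F(1), J \cdot F(1))$, to commutativity of the square
\[
\begin{tikzcd}
F(1) \arrow[r, "{[\theta]}"] \arrow[d, "\eta"'] & J \cdot F(1) \arrow[d, "J \cdot \eta"] \\
B \arrow[r, "{[\theta]}"'] & J \cdot B
\end{tikzcd}
\]
for every co-operation $[\theta]$ of the $M$-bialgebras, which is precisely the condition that $\eta$ be a morphism of $M$-bialgebras.

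I do not expect a real obstacle: the lemma is essentially a special case of the equivalence $M\Bialg \simeq \RAdj(M\Alg, M\Alg)\op$ from \cref{thm:MBialg}, applied to the bialgebra $F(1)$, whose corresponding right adjoint endofunctor is $1_{M\Alg}$ by the identification above. The only subtlety is bookkeeping of the direction of arrows: a bialgebra map $F(1) \to B$ corresponds to a natural transformation $\Phi_B \To \Phi_{F(1)} = 1_{M\Alg}$, matching the direction of $\varepsilon$.
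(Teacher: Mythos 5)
Your proposal is correct and ultimately rests on the same observation the paper makes: $1_{M\Alg}$ is the lift $\Phi_{F(1)}$, so the correspondence is a special case of the equivalence $M\Bialg \simeq \RAdj(M\Alg, M\Alg)\op$ in \cref{thm:MBialg}. You additionally spell out the explicit Yoneda/co-operation verification in the style of \cref{lem:M-plethory_square}, which the paper's one-line proof omits, but the underlying route is the same.
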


\begin{proof}
    This follows from \cref{thm:MBialg}: such a morphism $\varepsilon \maps \Phi \to 1_{M\Alg}$ is equivalent to a bialgebra morphism between their representing bialgebras, and the representing bialgebra of the identity $1_{M\Alg}$ is given by the free algebra on one generator $F(1)$, with its canonical bialgebra structure.
\end{proof}

\begin{lem}
\label{lem:M-plethory_coassociativity}
    The coassociativity and counit equations for the data 
    \[
    \delta \maps \Phi \To \Phi\Phi, \qquad \varepsilon \maps \Phi \To 1_{M\Alg}
    \]
    precisely correspond to associativity and unit equations for the plethysm and plethysm unit $1 \xrightarrow{u1} UF(1) \xrightarrow{U\eta} UB$, where $u$ denotes the unit of the adjunction $F \dashv U$. 
\end{lem}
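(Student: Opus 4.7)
The plan is to deduce this lemma from the monoidal equivalences of \cref{thm:MBialg} by transport of structure. Under the monoidal equivalence $(M\Bialg, \odot) \simeq (\RAdj(M\Alg, M\Alg), \circ)\op$, monoid objects correspond to monoid objects: an $M$-plethory structure $(B, m \maps B \odot B \to B, e \maps F(1) \to B)$ is the same datum as a comonoid in $(\RAdj(M\Alg, M\Alg), \circ)$, i.e.\ a comonad $(\Phi, \delta, \varepsilon)$ on $M\Alg$ whose underlying endofunctor is a right adjoint. So in principle the lemma is just a restriction of that monoidal equivalence to monoid/comonoid structures. What needs checking is that the translation identifies the plethysm multiplication $m$ with the algebra map $h \maps B \to \Phi(B)$ of \cref{lem:M-plethory_h,lem:M-plethory_square} (hence with the binary operation $(a,b) \mapsto h(b)(a)$ on $UB$), and identifies the plethysm unit $e$ with the bialgebra map $\eta \maps F(1) \to B$ of \cref{lem:M-plethory_counit} (hence with the set-level unit $1 \xrightarrow{u1} UF(1) \xrightarrow{U\eta} UB$).

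First I would make these identifications. By \cref{prop:odot}, the underlying $M$-algebra of $B \odot B$ is $\Psi_B(B_o)$, so a bialgebra map $m \maps B \odot B \to B$ is the same as an $M$-algebra map $\Psi_B(B_o) \to B_o$, which by the adjunction $\Psi_B \dashv \Phi$ is the same as an algebra map $h \maps B_o \to \Phi(B_o)$; the mate of $m$ under the equivalence of \cref{thm:MBialg} is precisely the natural transformation $\delta \maps \Phi \To \Phi\Phi$ associated with $h$ in \cref{lem:M-plethory_square}. Likewise, $F(1)$ is the monoidal unit of $(M\Bialg, \odot)$ with $\Psi_{F(1)} \cong 1_{M\Alg}$, so a bialgebra map $e \maps F(1) \to B$ is the same as a natural transformation $1_{M\Alg} \To \Psi_B$, whose mate is $\varepsilon \maps \Phi \To 1_{M\Alg}$, matching \cref{lem:M-plethory_counit}. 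Reading off the set-level binary operation from $h$ and the set-level unit from $\eta$ then recovers the plethysm and the plethysm unit of the statement.

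With these identifications in hand, I would verify that each comonad axiom for $(\delta, \varepsilon)$ corresponds, diagram-for-diagram, to the matching monoid axiom for $(m, e)$: coassociativity $(\Phi\delta)\circ\delta = (\delta\Phi)\circ\delta$ to the associativity pentagon $m \circ (m \odot 1_B) = m \circ (1_B \odot m)$ (modulo the associator), and the counit laws $(\Phi\varepsilon)\circ\delta = 1_\Phi = (\varepsilon\Phi)\circ\delta$ to the two unit triangles $m \circ (e \odot 1_B) = 1_B = m \circ (1_B \odot e)$. This step is automatic once the monoidality of the equivalence of \cref{thm:MBialg} is in force: whiskering a 2-cell by $\Phi$ on one side becomes, after passing to mates, $\odot$-tensoring the dual bialgebra morphism with $1_B$ on the corresponding side, and the associators/unitors match through the isomorphisms $\Psi_{B \odot B} \cong \Psi_B \circ \Psi_B$ and $\Psi_{F(1)} \cong 1_{M\Alg}$ supplied by \cref{prop:odot}. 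The main obstacle is purely the bookkeeping for this last translation — keeping whiskering conventions, mate calculus, and coherence cells aligned between the two sides — but no further computation is required.
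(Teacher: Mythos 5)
Your proposal takes a genuinely different route, but it has a gap at the last step. You correctly observe that, under the monoidal equivalence $(M\Bialg, \odot) \simeq (\RAdj(M\Alg,M\Alg), \circ)^{\op}$ of \cref{thm:MBialg}, monoid objects correspond to comonoid objects, so a monoid structure $(B, m, e)$ in $(M\Bialg, \odot)$ is the same as a right-adjoint comonad $(\Phi, \delta, \varepsilon)$; and you correctly identify $m$ with $h \maps B \to \Phi(B)$ via $(B\odot B)_o \cong \Psi_B(B_o)$ and $\Psi_B \dashv \Phi$, and $e$ with $\eta \maps F(1) \to B$. This is a clean and valid way to show that coassociativity of $\delta$ is equivalent to associativity of $m$ in $M\Bialg$ --- but that is essentially tautological, since an $M$-plethory is \emph{defined} as a monoid in $(M\Bialg,\odot)$.

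The lemma asserts something finer: that coassociativity of $\delta$ corresponds to associativity of the \emph{set-level} binary operation $p\maps UB\times UB\to UB$ obtained by uncurrying $Uh\maps UB\to M\Alg(B,B)\subseteq \Set(UB,UB)$, and it is this concrete form that the paper later needs to verify the rig-plethory structure on $\Lambda_+$. Your closing claim that ``reading off the set-level binary operation from $h$'' finishes the job, and that ``no further computation is required,'' is where the gap sits: reading off recovers the \emph{data} $p$, but does not establish that associativity of $m$ (equivalently, commutativity of the $\Phi$-coalgebra square for $h$) is equivalent to the equation $p(p(a,b),-) = p(a,p(b,-))$. That equivalence is not automatic, because the forgetful functor $M\Bialg \to \Set$, $B \mapsto UB$, is \emph{not} monoidal from $\odot$ to the cartesian product, so associativity of a $\odot$-monoid does not formally transport to associativity of a binary operation on underlying sets. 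The missing step is exactly the remaining chase the paper carries out: reduce by faithfulness of $U$ to a square of representables, apply Yoneda to arrive at the $\Phi$-coalgebra square for $h$, reduce once more by faithfulness of $U$, and then chase an element $a\in UB$ through the de-curried maps to obtain the identity $p(p(a,b),-)=p(a,p(b,-))$. Your high-level argument should therefore be supplemented by that final uncurrying and element chase; as written it proves a correct but different statement and then asserts, rather than proves, the translation to the lemma's conclusion.
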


\begin{proof}
    We focus on coassociativity, leaving the counit equation to the reader. By faithfulness of $U$, the coassociativity square for $\delta$ commutes if and only if we have commutativity of the following equivalent diagrams: 
    \[
    \begin{tikzcd} 
        U\Phi 
        \arrow[r, "U\delta"] 
        \arrow[d, "U\delta", swap] 
        & 
        U\Phi\Phi 
        \arrow[d, "U\delta\Phi"] 
        \\
        U\Phi\Phi 
        \arrow[r, "U\Phi\delta", swap] 
        & 
        U\Phi\Phi\Phi
    \end{tikzcd} 
    \qquad \Leftrightarrow \qquad 
    \begin{tikzcd}
        M\Alg(B, -) 
        \arrow[r, "\zeta"] 
        \arrow[dd, "\zeta", swap]
        & 
        M\Alg(B, \Phi -) 
        \arrow[d] 
        \\& 
        M\Alg(\Phi B, \Phi\Phi -) 
        \arrow[d, "{M\Alg(h, \Phi\Phi -)}"] 
        \\
        M\Alg(B, \Phi -) 
        \arrow[swap, r, "{M\Alg(B, \delta)}"] 
        & 
        M\Alg(B, \Phi\Phi -)
    \end{tikzcd}
    \]
    where the vertical composite uses the hom-set expression of $U\delta \Phi = \zeta \Phi$ given after the proof of \cref{lem:M-plethory_h}. Chasing through the Yoneda lemma, the last diagram commutes iff the following square commutes (whose import is that $h$ defines a $\Phi$-coalgebra structure): 
    \[
    \begin{tikzcd}
        B 
        \arrow[r, "h"] 
        \arrow[d, "h", swap] 
        & 
        \Phi B 
        \arrow[d, "\Phi h"] 
        \\
        \Phi B 
        \arrow[r, "\delta B", swap] 
        & 
        \Phi\Phi B.
    \end{tikzcd}
    \] 
    Again by faithfulness of $U$, this last square commutes if and only if we have commutativity of the following diagrams: 
    \[
    \begin{tikzcd}
        UB 
        \arrow[r, "Uh"] 
        \arrow[d, "Uh", swap] 
        & 
        U\Phi B 
        \arrow[d, "U\Phi h"] 
        \\ 
        U\Phi B 
        \arrow[r, "U\delta B", swap] 
        & 
        U\Phi\Phi B
    \end{tikzcd}
    \quad
    \Leftrightarrow
    \begin{tikzcd}
        UB 
        \arrow[rr, "Uh"] 
        \arrow[d, "Uh", swap] 
        && 
        M\Alg(B, B) 
        \arrow[d, "{M\Alg(B, h)}"] 
        \\ 
        M\Alg(B, B) 
        \arrow[r] 
        &
        M\Alg(\Phi B, \Phi B) 
        \arrow[r, "{M\Alg(h, 1)}", swap] 
        &
        M\Alg(B, \Phi B)
    \end{tikzcd}
    \]
    where the horizontal composite uses the hom-set expression of $U\delta B = \zeta B$ given after the proof of \cref{lem:M-plethory_h}. 
    
    At this point, it is simplest to think of the last diagram purely in terms of sets and functions; for example, $\Phi B$ is just a hom-set equipped with $M$-algebra structure. The function $Uh$ gives a function 
    \[
        UB \to M\Alg(B, B) \to \Set(UB, UB)
    \]
    which de-curries to a map called the \define{plethystic multiplication}, denoted for now as $p \maps UB \times UB \to UB$. Chasing an element $a \in UB$ around the last diagram, the commutativity of the last diagram is equivalent to commutativity of an element assignment where going across (A) and then down (D) we arrive at 
    \[
        a \stackrel{\mathrm{A}}{\mapsto} [p(a, -) = [b \mapsto p(a, b)]] \stackrel{\mathrm{D}}{\mapsto} (b \mapsto p(p(a, b), -))
    \]
    whereas going down and then across, we arrive at 
    \[
        a \stackrel{\mathrm{D}}{\mapsto} p(a, -) \stackrel{\mathrm{A}}{\mapsto} [f \mapsto p(a, -) \circ f] \stackrel{\mathrm{A}}{\mapsto} [b \mapsto p(a, -) \circ p(b, -) = p(a, p(b, -)]
    \]
    so that the commutativity asserts precisely $p(p(a, b), -) = p(a, p(b, -))$, which is associativity for the plethysm. 
\end{proof}

\subsection{2-Plethories}
\label{sec:2-plethories}

Next we categorify the concept of rig-plethory. We believe it is possible to categorify the entire preceding story. For example, \cref{thm:liftadjoint} should categorify to the statement that any lift $\Phi \maps \TRig \to \TRig$ of a representable \break $\TRig(B, -) \maps \TRig \to \CCat$ through the forgetful functor $U \maps \TRig \to \CCat$ must be a right biadjoint. However, proving this would require a detour through some 2-categorical algebra, including for example a 2-categorical analogue of the adjoint lifting theorem \cite{Lucatelli}. Since we have a number of equivalent definitions of $M$-plethory, we prefer to categorify the one most convenient for our purposes. So, we define a 2-plethory as a right 2-adjoint 2-comonad:

\begin{defn}
    A \define{2-plethory} is a 2-comonad $\Phi \maps \TRig \to \TRig$ whose underlying 2-functor is a right 2-adjoint. 
\end{defn}

The underlying 2-rig of $\Phi$ may be extracted by an easy categorification of \cref{lem:rightrepable}: 
\begin{prop}
\label{prop:2rep}    
    Let $U \maps \TRig \to \CCat$ be the forgetful functor, and $\Phi$ a 2-plethory. Then $U\Phi \maps \TRig \to \CCat$ is representable, and the representing object is $\Psi(\ksbar)$, where $\Psi$ is left 2-adjoint to $\Phi$.  
\end{prop}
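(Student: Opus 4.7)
The plan is to categorify the proof of \cref{lem:rightrepable} one step up, using the representability result \cref{thm:representing_U} in place of the terminal-object trick that produces $\Set(1, G(-)) \cong \C(F(1), -)$ in the 1-categorical case.

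First I would recall that \cref{thm:representing_U} supplies a pseudonatural equivalence $U(-) \simeq \TRig(\ksbar, -)$, so that $\ksbar$ represents the forgetful 2-functor. Applying this to the 2-rig $\Phi(R)$ for each $R$ in $\TRig$ (and noting that the equivalence is pseudonatural in its argument, which allows composition with the 2-functor $\Phi$) gives a pseudonatural equivalence
\[
    U\Phi(-) \;\simeq\; \TRig(\ksbar, \Phi(-)).
\]

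Next I would invoke the 2-adjunction $\Psi \dashv \Phi$ furnished as part of the definition of 2-plethory. This gives a pseudonatural equivalence of hom-categories
\[
    \TRig(\ksbar, \Phi(-)) \;\simeq\; \TRig(\Psi(\ksbar), -).
\]
Composing the two equivalences yields $U\Phi(-) \simeq \TRig(\Psi(\ksbar), -)$, which exhibits $U\Phi$ as representable with representing object $\Psi(\ksbar)$, as claimed.

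There is no genuine obstacle here: the entire argument is a direct transcription of the 1-categorical reasoning $U\Phi_B \cong \Set(1, U\Phi_B(-)) \cong M\Alg(F(1), \Phi_B(-)) \cong M\Alg(\Psi_B(F(1)), -)$ into the 2-categorical setting, and every step is justified by a result already in hand---\cref{thm:representing_U} for the representability of $U$, and the hypothesized 2-adjunction $\Psi \dashv \Phi$ for the hom-category equivalence. The only thing one might worry about is coherence of the pseudonatural structures involved when one composes the equivalences, but this is automatic since all the equivalences arise from 2-adjunctions and from the 2-universal property of $\ksbar$, both of which deliver pseudonatural (in fact 2-natural or at worst pseudonatural) data by construction.
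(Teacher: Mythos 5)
Your proposal is correct and follows essentially the same route as the paper's proof: both rely on \cref{thm:representing_U} for the representability $U \simeq \TRig(\ksbar, -)$ and on the 2-adjunction $\Psi \dashv \Phi$. The only cosmetic difference is that the paper organizes the chain of equivalences by first noting that $U\Phi$ is a right 2-adjoint with left 2-adjoint $L = \Psi \circ \overline{k\S(-)}$ and then using the $\CCat(1, U\Phi-)$ step from the categorified \cref{lem:rightrepable} to identify the representing object as $L(1) = \Psi(\ksbar)$, whereas you substitute the representable form of $U$ directly into $U\Phi$ and then apply the adjunction—a marginally more direct arrangement of the same argument.
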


The representing object $B = \Psi(\ksbar)$ is the underlying 2-rig of the 2-plethory $\Phi$; the fact that $U\Phi \simeq \TRig(B, -)$ means that $\Phi$ is a lift of $\TRig(B, -)$ through $U$, so that $\Phi$ endows $B$ with 2-birig structure as in this by now all too familiar picture:
\[
\begin{tikzcd}[column sep = large]
    &
    \TRig
    \arrow[d, "U"]
    \\
    \TRig
    \arrow[ur, "\Phi"]
    \arrow[ur, swap, ""{name = phi}, phantom, bend right = 5, pos = 0.6]
    \arrow[r, swap, "{\TRig(B, -)}"]
    \arrow[from = phi, r, Rightarrow, "\sim", sloped]
    &
    \CCat.
\end{tikzcd}
\]

\begin{proof}
    By the proof of \cref{thm:representing_U}, $U$ has a left 2-adjoint taking any category $C$ to the 2-rig $\overline{k\SS(C)}$. Thus $U\Phi \maps \TRig \to \CCat$ is a right 2-adjoint, with left 2-adjoint $L$ say, and we have 
    \[
    U\Phi \simeq \C(1, U\Phi-) \simeq \TRig(L(1), -).
    \]
    making $L(1)$ a representing object for $U\Phi$. But $L$, being the left 2-adjoint of $U\Psi$, is equivalent to $\Psi \circ \overline{k\SS(-)}$ where $\Psi$ is the left 2-adjoint of $\Phi$. Thus $L(1) \simeq \Psi(\overline{k\SS(1)}) = \Psi(\ksbar)$.
\end{proof}

\begin{thm}
\label{thm:ksbar_as_2-plethory}
    The identity $1 \maps \TRig \to \TRig$ is a 2-plethory with underlying 2-rig $\ksbar$, which in turn has underlying category $U(\ksbar) \simeq \Poly \simeq \Schur$.
\end{thm}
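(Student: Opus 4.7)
The plan is to verify each of the three assertions in turn, each being essentially a direct consequence of what has already been established. The heart of the argument is that ``everything is identity,'' so the only real work is to confirm that the definitions line up cleanly.

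First I would observe that the identity 2-functor $1 \maps \TRig \to \TRig$ carries a canonical 2-comonad structure in which both the comultiplication $\delta \maps 1 \To 1 \circ 1$ and the counit $\varepsilon \maps 1 \To 1$ are the identity 2-natural transformations, with coassociativity and counit axioms holding trivially. Moreover $1$ is its own left 2-adjoint (via the identity unit and counit), so it qualifies as a right 2-adjoint. Together these two observations show that $1 \maps \TRig \to \TRig$ satisfies the defining conditions of a 2-plethory.

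Next I would identify the underlying 2-rig. By \cref{prop:2rep}, the underlying 2-rig of a 2-plethory $\Phi$ is $\Psi(\ksbar)$, where $\Psi$ is left 2-adjoint to $\Phi$. In our case $\Phi = 1$ and $\Psi = 1$, so the underlying 2-rig is $1(\ksbar) = \ksbar$. Equivalently, the required lifting diagram
\[
\begin{tikzcd}[column sep = large]
    &
    \TRig
    \arrow[d, "U"]
    \\
    \TRig
    \arrow[ur, "1"]
    \arrow[ur, swap, ""{name = phi}, phantom, bend right = 5, pos = 0.6]
    \arrow[r, swap, "{\TRig(\ksbar, -)}"]
    \arrow[from = phi, r, Rightarrow, "\sim", sloped]
    &
    \CCat
\end{tikzcd}
\]
is precisely the pseudonatural equivalence $\TRig(\ksbar, -) \simeq U$ provided by \cref{thm:representing_U}, which exhibits $\ksbar$ as the representing object.

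Finally, to identify the underlying category of $\ksbar$ with $\Schur$, I would invoke \cref{thm:schur_versus_poly} together with \cref{thm:poly_versus_kS}, which together give the equivalence $\Schur \simeq U(\ksbar)$ used throughout the paper as a notational convention. Assembling these three observations completes the proof. I do not expect any real obstacle here; the theorem is essentially a summary of the structural work done in Sections 2--4, and its purpose is to package $\Schur$ as the motivating example of the abstract definition of a 2-plethory just introduced. The only subtlety worth flagging in the written proof is that ``2-comonad'' here is meant in the strict sense for the identity (trivially) while the right 2-adjoint structure is likewise strict; no coherence issues arise.
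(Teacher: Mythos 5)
Your proposal is correct and follows essentially the same route as the paper: identify the identity 2-functor as a (trivial) right-2-adjoint 2-comonad, apply \cref{prop:2rep} with $\Phi = \Psi = 1$ to recover $\ksbar$, and then invoke the established equivalence $U(\ksbar) \simeq \Schur$. The only cosmetic difference is that the paper cites \cref{prop:schur_2rig} directly for the last step, while you cite its ingredients \cref{thm:schur_versus_poly} and \cref{thm:poly_versus_kS}; the content is the same.
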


\begin{proof}
    The identity $1 \maps \TRig \to \TRig$ is naturally a 2-comonad whose underlying 2-functor has a left 2-adjoint, namely the identity.  The result then follows from \cref{prop:2rep} taking $\Phi = \Psi = 1$, together with Propositions \ref{prop:poly_2rig} and \ref{prop:schur_2rig}, which say that $\Poly$ and $\Schur$ are equivalent to the underlying category of $\ksbar$.
\end{proof}

Any 2-plethory $\Phi$ has a left 2-adjoint $\Psi$, and the 2-comonad structure on $\Phi$ is mated to a 2-monad structure on $\Psi$.  The representing object for $U\Phi$ is $B = \Psi(\ksbar)$, so the 2-monad multiplication $m \maps \Psi\Psi \To \Psi$ applied to $\ksbar$ results in a 2-rig map 
\[
    \Psi(B) \to B
\]
which by the adjunction $\Psi \dashv \Phi$ transforms to a 2-rig map
\[
    h \maps B \to \Phi(B)
\]
Applying $U \maps \TRig \to \CCat$, this results in a functor 
\[
    UB \to \TRig(B, B)
\]
whose codomain maps into $\CCat(UB, UB)$. From the cartesian closure of $\CCat$, the resulting functor $UB \to \CCat(UB, UB)$ exponentially transposes to a functor 
\[
    \comp_B \maps UB \times UB \to UB
\]
called the \define{plethystic monoidal structure} for the 2-plethory $\Phi$. 

The reader may wonder how this abstract description of the plethystic monoidal structure reduces, in a special case, to the substitution product described in \cref{cor:pleth}.  For this we take the 2-plethory $\Phi$ to be $1 \maps \TRig \to \TRig$, so that $B = \ksbar$.  In this case, the curried form of $\comp_B$ is the map
\[ Uh \colon U(\ksbar) \to \TRig(\ksbar, \ksbar)\]
that takes any $\tau \in U(\ksbar)$ to the unique (up to isomorphism) 2-rig map $\ksbar \to \ksbar$ whose value at the generating object of the 2-rig $\ksbar$ is $\tau$.  Explicitly, this is none other than the assignment 
\[\tau \mapsto (\rho \mapsto \rho \bullet \tau)\] 
described in \cref{cor:pleth}.  Thus, the plethystic monoidal product for $1_{\TRig}$ is indeed the substitution product of polynomial species.

\section{The rig-plethory of positive symmetric functions}
\label{sec:lambdaplus}

Our next major goal is to decategorify the 2-rig $\ksbar$ by forming its Grothendieck group, $\Lambda = K(\ksbar)$ and show that the result is a ring-plethory.  In this section we start by showing that the 2-birig structure on $\ksbar$ induces a birig structure on the set of isomorphism classes of objects of $\ksbar$, which we call $\Lambda_+$.  Then we show that the 2-plethory structure on $\ksbar$ makes $\Lambda_+$ into a rig-plethory. Some of this depends on special features of the representation theory of the symmetric groups. In the next section we  explain how the rig-plethory structure on $\Lambda_+$ induces a ring-plethory structure on $\Lambda$. 

It helps to break down the construction of the Grothendieck group $K$ into two steps. For the first step, given a Cauchy complete linear category $\C$, we define $J(\C)$ to be the set of isomorphism classes of objects of $\C$. This is made into a commutative monoid with its addition and additive unit coming from coproducts and the initial object in $\C$:
\[     [x] + [y] = [x \oplus y], \qquad 0 = [0]  \]
for all $x, y \in \C$. In fact $J$ extends to a 2-functor 
\[   J \maps \Cauch\Lin_0 \to \CMon \]
where we treat $\CMon$ as a 2-category with only identity 2-morphisms, and for any 2-category $\mathbf{B}$ we let $\mathbf{B}_0$ be the sub-2-category with the same objects and morphisms, but only invertible 2-morphisms as 2-morphisms. The point here is that naturally isomorphic functors have the same effect on isomorphism classes of objects.

Similarly, if $\R$ is a 2-rig, then $J(\R)$ acquires a rig structure with its multiplication and multiplicative unit coming from monoidal products and the monoidal unit in $\R$: 
\[
    [x] \cdot [y] = [x \otimes y], \qquad 1 = [I].
\]
In particular, we call $J(\ksbar)$ the rig of \define{positive symmetric functions}, and we denote it as $\Lambda_+$.

The second step is to take the group completion of the commutative monoid $J(\C)$. This gives the Grothendieck group $K(\C)$. The theory of rings and their modules extends to rigs, and just as an abelian group is the same as an $\Z$-module, a commutative monoid may be seen as an $\N$-module. In these terms group completion is the functor 
\[
\Z \otimes_\N - \maps \CMon \to \Ab.
\]
Thus we have a commutative diagram
\[
\begin{tikzcd}
    \Cauch\Lin_0
    \arrow[r, swap, "J"] \arrow[rr, bend left, "K"]
    &
    \CMon 
    \arrow[r, swap, "\Z \otimes_\N -"]
    &
    \Ab
\end{tikzcd}
\]
Similarly, if $\R$ is a 2-rig, then by group-completing the additive monoid of the rig $J(\R)$, we get a ring $K(\R)$, so that we have a commutative diagram
\[
\begin{tikzcd}[column sep = large]
    \TRig_0
    \arrow[r, swap, "J"] 
    \arrow[rr, bend left, "K"]
    & 
    \Rig
    \arrow[r, swap, "\Z \otimes_\N -"]
    &
    \Ring
\end{tikzcd}
\]
In particular, the ring $K(\ksbar)$ is denoted $\Lambda$. This is the famous ring of symmetric functions. 

In the rest of this paper, we transport the conceptually simple 2-plethory structure on $\ksbar$ to a ring-plethory structure on $\Lambda$ using the functor $K$. This is not trivial, and we shall have to consider the functors $J$ and $\Z \otimes_\N -$ separately in what follows. 

Some technical considerations include the following. We would like for the 2-birig structure on $\overline{k \S}$, which involves 2-rig co-operations of type 
\[
    \overline{k \S} \to \overline{k \S} \boxtimes \overline{k \S}, 
\]
to yield rig co-operations of type
\[
    J(\overline{k \S}) \to J(\overline{k \S}) \otimes J(\overline{k \S})
\]
making $J(\overline{k \S})$ into a birig. This would hold automatically if $J \maps (\Cauch\Lin_0, \boxtimes) \to (\CMon, \otimes)$ preserved tensor products. As we shall see, this is not true in complete generality. But it turns out to be true for the tensor products we are interested in, involving $\overline{k \S}$. 

A separate issue is that we need to examine when the group completion of a birig is a biring, and when the group completion of a rig-plethory gives a ring-plethory. We reserve those considerations for \cref{sec:lambda}.

\subsection{The rig structure on $\Lambda_+$}

We begin by making $\Lambda_+$ into a rig. To do this, we lift the 2-functor
\[
    J \maps \Cauch\Lin_0 \to \CMon
\]
to a 2-functor from 2-rigs to rigs. Applying this to the 2-rig $\ksbar$, we obtain the rig structure on $\Lambda_+$.

As a warm-up, we begin with a more detailed consideration of $J$ from the viewpoint of base change for enriched categories. To bring $\Cauch\Lin$ and $\CMon$ together on a level playing ground, we performed two moves. The first was the move from $\Cauch\Lin$ to $\Cauch\Lin_0$ by discarding all 2-cells except for the invertible ones. The second was to treat $\CMon$ as a 2-category whose only 2-cells are identity 2-cells. 

Both moves involve change of base. If we consider a 2-category (like $\Cauch\Lin$) as a $\Cat$-enriched category, i.e.\ with homs valued in $\Cat$,
\[
\hom \maps  \ob(\Cauch\Lin) \times \ob(\Cauch\Lin) \to \Cat,
\]
then the first move amounts to composing $\hom$ with the functor 
\[
    \core \maps \Cat \to \Grpd
\]
that assigns to a category $\C$ the groupoid $\core(\C)$ whose objects are the same as those of $\C$, and whose arrows are the invertible arrows of $\C$. (Note that this functor $\core$ is a 1-functor only, not a 2-functor. Note also that this 1-functor preserves cartesian products. In general, for change of base of enrichment to work properly, one needs at a minimum a lax monoidal functor from one base to the other; for cartesian monoidal products, this is the same as (strong) preservation of cartesian products.) In general, for any 2-category $\bC$, the composite
\[
    \ob(\bC) \times \ob(\bC) \xrightarrow{\hom} \Cat \xrightarrow{\core} \Grpd
\]
defines a $\Grpd$-enriched category $\bC_0$. 

On the other hand, if we consider an ordinary 1-category (like $\CMon$) as a $\Set$-enriched category, i.e.\ with homs valued in $\Set$, 
\[
    \hom \maps \ob(\CMon) \times \ob(\CMon) \to \Set, 
\]
then the second move amounts to composing this $\hom$ with the product-preserving functor from sets to groupoids, 
\[
    \disc \maps \Set \to \Grpd,
\]
that assigns to a set $X$ the discrete groupoid $\disc(X)$ whose objects are the elements of $X$. In general, for any category $\C$, the composite
\[
    \ob(\C) \times \ob(\C) \xrightarrow{\hom} \Set \xrightarrow{\disc} \Grpd
\]
defines a $\Grpd$-enriched category.

By these moves, the homs are brought to a level playing field in $\Grpd$, and $J$ is construed as a $\Grpd$-enriched functor, involving maps of groupoids
\[
    \core(\hom(\C, \D)) \to \disc(\hom(J\C, J\D)). 
\]
Now, $\disc \maps \Set \to \Grpd$ has a left adjoint $\pi_0 \maps \Grpd \to \Set$ which assigns to a groupoid its set of `connected components', or isomorphism classes. Thus the above maps of groupoids are equivalent to functions between sets 
\[
    (\pi_0 \circ \core)(\hom(\C, \D)) \to \hom(J\C, J\D)
\]
and since it is easily seen that $\pi_0$ preserves products, we obtain a product-preserving change of base, 
\[
    \pi_0\circ\core \maps \Cat \to \Set. 
\]

In many cases, applying the base change  $\pi_0 \circ \core$ is the `right' way to turn a 2-category into a 1-category. It is, in fact, simply a formalization of `decategorification', which demotes isomorphisms to equalities. The construction is sometimes called the \define{homotopy category} of a 2-category, so we write $\bC\ho$ for this construction applied to a 2-category $\bC$. 

We now apply this to our goal: decategorifying a 2-rig and obtaining a rig. First, note that $\TRig\ho$ is the 1-category whose objects are 2-rigs and whose morphisms are symmetric monoidal natural isomorphism classes of 2-rig maps. We define $H \maps \TRig\ho \to \Set$ to be the functor given by the composite
\[
    \TRig\ho \xrightarrow{U\ho} \CCat\ho \xrightarrow{\hom(1, -)} \Set. 
\]
This functor $H$ assigns to a small 2-rig its set of isomorphism classes. 

Another point of view on this functor will be useful to us. Observe that the 1-functor between 1-categories 
\[
    \core \maps \Cat \to \Grpd
\]
extends to a \emph{2-functor} 
\[
   \mathbf{core} \maps \CCat_0 \to \GGrpd
\]
from the $\Grpd$-enriched category $\CCat_0$ consisting of categories, functors, and natural isomorphisms to the \emph{2-category} of groupoids, functors and natural isomorphisms. We also have an evident 2-functor $\pi_0 \maps \GGrpd \to \Set$. Let
\[
    \pi \maps \CCat_0 \to \Set
\]
denote the composite 2-functor $\pi_0 \circ \mathbf{core}$.

Finally, to $\pi$ we may apply the homotopy 1-category construction, obtaining a functor 
\[
    \CCat\ho \simeq (\CCat_0)\ho  \xrightarrow{\pi\ho} \Set\ho \simeq \Set. 
\]
\begin{lem}
    The above functor from $\CCat\ho$ to $\Set$ is naturally isomorphic to
    \[ \hom(1, -) \maps \CCat\ho \to \Set.\]
\end{lem}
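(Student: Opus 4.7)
The plan is to exhibit an explicit natural isomorphism between the two functors, and the main content is purely bookkeeping: both functors compute the set of isomorphism classes of objects, and the natural map identifying them is essentially the Yoneda-like evaluation at the unique object $\star$ of the terminal category $1$.

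For a category $\C$, I define
\[
\eta_\C \maps \hom(1,\C) \longrightarrow \pi(\C), \qquad [F] \longmapsto [F(\star)],
\]
where on the left $\hom(1,\C)$ denotes the hom-set in $\CCat\ho$, i.e.\ the set of natural-isomorphism classes of functors $1 \to \C$, and on the right $[F(\star)]$ denotes the connected component of $F(\star)$ in $\core(\C)$. The first step is well-definedness: any natural isomorphism $F \Rightarrow G$ of functors $1 \to \C$ gives an isomorphism $F(\star) \cong G(\star)$ in $\C$, hence $[F(\star)] = [G(\star)]$ in $\pi_0(\core(\C))$.

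The second step is to check that $\eta_\C$ is a bijection. Surjectivity is immediate: every object $x \in \C$ is $F_x(\star)$ for the unique functor $F_x \maps 1 \to \C$ sending $\star$ to $x$. For injectivity, suppose $[F(\star)] = [G(\star)]$ in $\pi_0(\core(\C))$; then there is an isomorphism $\phi \maps F(\star) \xrightarrow{\sim} G(\star)$ in $\C$, and because $1$ has only the identity morphism, $\phi$ tautologically defines a natural isomorphism $F \Rightarrow G$, so $[F] = [G]$ in $\hom(1,\C)$.

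The third step is naturality in $\C$. A morphism $[H] \maps \C \to \D$ in $\CCat\ho$ is an isomorphism class of ordinary functors $H \maps \C \to \D$. On the one side, $\hom(1,-)$ sends $[H]$ to postcomposition $[F] \mapsto [H \circ F]$, and then $\eta_\D[H \circ F] = [H(F(\star))]$. On the other side, $\pi\ho[H]$ is the function $[x] \mapsto [H(x)]$ on isomorphism classes, so $\pi\ho[H] \circ \eta_\C \,[F] = \pi\ho[H]\,[F(\star)] = [H(F(\star))]$. The two agree, so the square commutes and $\eta \maps \hom(1,-) \Rightarrow \pi\ho$ is a natural isomorphism. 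There is no real obstacle here; the only subtlety worth flagging is that one must pass to $\CCat\ho$ on both sides before the comparison makes sense, since neither $\hom(1,-)$ nor $\pi\ho$ is literally defined on the 2-category $\CCat$ as a $\Set$-valued functor, which is precisely why the construction $\pi_0 \circ \core$ was introduced.
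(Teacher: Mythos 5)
Your proof is correct and follows essentially the same route as the paper's: both unwind the definitions to see that each functor computes the set of isomorphism classes of objects, with the comparison map being evaluation at the unique object of $1$. The paper's version is terser (it just asserts the two functors have the same effect on $0$-cells and $1$-cells), whereas you make the natural isomorphism $\eta$ explicit and check well-definedness, bijectivity, and naturality in turn; the extra detail is harmless but not a different argument.
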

\begin{proof}
    For any category $C \in \CCat\ho$, $\hom(1, C)$ is the set of isomorphism classes of objects $[c]\maps 1 \to \C$ of $\C$, and for any class of functors $[F] \maps \C \to \D$, $\hom(1, [F])$ is the (well-defined) function taking $[c]$ to $[F(c)]$. If $\alpha \maps F \To G$ is a 2-cell of $\CCat_0$, then $\pi(\alpha)$ is the identity 2-cell between functions $\pi(F), \pi(G)$ where $\pi(F)(c) = [F(c)]$. So $\pi\ho$ has the same effect as $\hom(1, -)$ on 0-cells and 1-cells of $\C\ho$. 
\end{proof}

\begin{cor}
\label{cor:hom-change}
    Let $\C$ be a 2-category, and for objects $c, d$ of $\C$, regard the hom-category $\C(c, d)$ as an object of $\CCat\ho$. Then 
    \[
        \CCat\ho(1, \C(c, d)) \cong \C\ho(c, d).
    \]
\end{cor}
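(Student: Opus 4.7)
The plan is to chain together the preceding lemma with the definitional unpacking of both sides. Specifically, the preceding lemma gives a natural isomorphism $\hom(1,-) \cong \pi\ho$ of functors $\CCat\ho \to \Set$, so applying this at the object $\C(c,d) \in \CCat\ho$ yields
\[
    \CCat\ho(1, \C(c,d)) \cong \pi\ho(\C(c,d)) = \pi_0(\mathbf{core}(\C(c,d))).
\]
Thus it suffices to identify the right-hand side with $\C\ho(c,d)$.

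For that identification, I would simply unpack both constructions. The objects of the hom-category $\C(c,d)$ are by definition the 1-morphisms $c \to d$ in $\C$, and the morphisms of $\C(c,d)$ are the 2-cells between them. The functor $\mathbf{core}$ discards non-invertible 2-cells, so $\mathbf{core}(\C(c,d))$ is the groupoid whose objects are 1-morphisms $c \to d$ in $\C$ and whose arrows are invertible 2-cells between them. Finally $\pi_0$ takes connected components, so $\pi_0(\mathbf{core}(\C(c,d)))$ is the set of equivalence classes of 1-morphisms $c \to d$ under the relation of being connected by an invertible 2-cell. But this is precisely the hom-set $\C\ho(c,d)$ by the very definition of the homotopy 1-category construction $\bC \mapsto \bC\ho = \pi_0 \circ \mathbf{core}$ applied hom-wise.

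I expect no serious obstacle: the corollary is essentially a restatement of the lemma combined with the definition of $\bC\ho$. The only thing to be careful about is distinguishing the two uses of the construction $(-)\ho$: once applied to the 2-category $\CCat$ to form $\CCat\ho$ (the ambient 1-category in which we take $\hom(1,-)$), and once applied hom-wise to $\C$ to form $\C\ho$. Naturality in $\C(c,d)$ of the isomorphism from the lemma ensures the identification behaves well as $c,d$ vary, but this is not needed for the bare set-level isomorphism claimed by the corollary.
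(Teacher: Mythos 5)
Your proof is correct and matches the paper's intended reasoning: the paper states the corollary without an explicit proof precisely because, as you show, it follows by applying the preceding lemma to the object $\C(c,d)$ and then unwinding the definition of $\bC\ho$ as hom-wise application of $\pi_0 \circ \core$.
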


With this infrastructure in place, we can begin the decategorification process. Starting with any 2-rig $R$, the underlying category $U(R)$ comes equipped with categorical operations making the category a 2-rig, e.g.\ the coproduct operation and tensor operation 
\[
    \oplus, \otimes \maps UR \times UR \to UR
\]
and so on. There are various coherent natural isomorphisms such as associativity, symmetry, distributivity, etc.\ but when we change our base of enrichment from $\Cat$ to $\Set$ along $\pi_0 \circ \core$, and interpret these isomorphisms as equations in $\CCat\ho$, the object $UR$ becomes simply a rig object in $\CCat\ho$. Then we apply the product-preserving functor $\CCat\ho(1, -) \maps \CCat\ho \to \Set$ to this rig object to get a set $H(R)$ equipped with rig structure. As above, this rig is denoted $J(R)$. 

Similarly, the underlying functor of a 2-rig map $R \to S$ preserves these categorical operations up to coherent isomorphisms, but these isomorphisms become equations when in $\CCat\ho$, so that we get a rig homomorphism $U(R) \to U(S)$ between rig objects in $\CCat\ho$. Applying $\CCat\ho(1, -) \maps \CCat\ho \to \Set$, the function $H(R) \to H(S)$ preserves the rig operations, giving a homomorphism $J(R) \to J(S)$ in $\Rig$. 

Thus we have the following result. 

\begin{lem}
    The functor $H \maps \TRig\ho \to \Set$ lifts through the forgetful functor $U \maps \Rig \to \Set$ to a product-preserving functor $J \maps \TRig\ho \to \Rig$: 
    \[
    \begin{tikzcd}[column sep = large]
        &
        \Rig
        \arrow[d, "U"]
        \\
        \TRig\ho
        \arrow[r, swap, "H"]
        \arrow[ur, dashed, "J"]
        &
        \Set.
    \end{tikzcd}
    \]
\end{lem}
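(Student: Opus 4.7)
The plan is to turn the preceding narrative into a bona fide proof by formalizing ``coherent isomorphisms become equalities after passing to $\CCat\ho$.'' First, observe that any 2-rig $R$ comes equipped with 2-rig operations on $UR$, namely the coproduct $\oplus \maps UR \times UR \to UR$, the tensor $\otimes \maps UR \times UR \to UR$, the zero object $0 \maps 1 \to UR$, and the tensor unit $I \maps 1 \to UR$. These are subject to the usual rig axioms (associativity, unitality, commutativity, distributivity) which hold only up to specified invertible natural transformations. Viewed as a diagram in $\CCat_0$, this data exhibits $UR$ as a pseudo-rig object; but the change-of-base 2-functor $\CCat_0 \to \CCat\ho$ (obtained from $\pi_0 \circ \core$, which preserves products) sends all 2-cells to identities, so the image of $UR$ is a strict rig object in $\CCat\ho$.

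Next, a 2-rig morphism $f \maps R \to S$ preserves coproducts, the zero object, tensor products and tensor units, again only up to coherent invertible 2-cells. By the same collapse argument, $U\ho(f) \maps UR \to US$ is a strict rig homomorphism between the rig objects just constructed in $\CCat\ho$. So $U\ho$ lifts to a functor $\TRig\ho \to \Rig(\CCat\ho)$, where $\Rig(\C)$ denotes the category of rig objects and rig morphisms in a finite-product category $\C$.

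Now apply the product-preserving functor $\CCat\ho(1, -) \maps \CCat\ho \to \Set$ (product-preservation holds because it is representable, and equivalently because it is $\pi\ho$ by the preceding lemma). A product-preserving functor carries rig objects to rig objects and rig morphisms to rig morphisms, so we obtain an induced functor
\[
    \Rig(\CCat\ho) \longrightarrow \Rig(\Set) = \Rig.
\]
Composing with the previous lift gives the desired $J \maps \TRig\ho \to \Rig$. By construction, post-composing $J$ with $U \maps \Rig \to \Set$ agrees with $H = \CCat\ho(1, -) \circ U\ho$, establishing the lift.

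It remains to check product-preservation of $J$. Finite products in $\TRig$ are built on the product category with pointwise 2-rig structure, so $U \maps \TRig \to \CCat$ preserves finite products; so does its decategorification $U\ho$. Composing with the product-preserving $\CCat\ho(1, -)$, we see that $H$ preserves products, and since $U \maps \Rig \to \Set$ creates limits, the lift $J$ preserves them as well. The only genuine obstacle is the coherence bookkeeping in the first two paragraphs: one must ensure that all the defining isomorphisms of a symmetric monoidal Cauchy-complete linear category, and of a symmetric monoidal linear functor between such, really do become equalities after applying $\pi_0 \circ \core$ hom-wise. But since $\pi_0 \circ \core$ is a product-preserving change of base of enrichment and identifies isomorphic morphisms, this follows uniformly and needs no case-by-case verification.
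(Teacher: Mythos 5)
Your proof is correct and follows essentially the same approach as the paper: the paper delegates the "lift" part of the lemma to the informal narrative preceding it (coherence 2-cells collapse to equalities under $\pi_0 \circ \core$, making $UR$ a rig object in $\CCat\ho$, then apply the product-preserving $\CCat\ho(1,-)$), and its formal proof addresses only product-preservation via $U \maps \Rig \to \Set$ creating products. You simply make that preceding narrative explicit inside the proof before giving the same product-preservation argument.
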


\begin{proof} 
    The product-preservation of $J$ follows from product-preservation of $H$ and the fact that $U$ reflects products (products in $\Rig$ are created from products in $\Set$). 
\end{proof}

Applying this to the 2-rig $\ksbar$ we obtain:

\begin{thm}
$\Lambda_+ = J(\ksbar)$ is a rig, so its group completion $\Lambda = \mathbb{Z} \otimes_\N \Lambda_+$ is a ring.
\end{thm}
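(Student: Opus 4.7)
The statement is essentially a direct application of the preceding lemma (that $J$ lifts through $U$ to a product-preserving functor $\TRig\ho \to \Rig$) to the specific 2-rig $\ksbar$, combined with the general fact about group-completing rigs. So my plan is very short.

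First, I would simply instantiate the preceding lemma at the object $\ksbar \in \TRig\ho$. Since $\ksbar$ is a 2-rig (in fact, the free 2-rig on one generator, by \cref{thm:representing_U}), the lemma produces a rig $J(\ksbar)$ whose underlying set is $H(\ksbar)$, the set of isomorphism classes of objects of $\ksbar$. By the identification $\Schur \simeq U(\ksbar)$, this is the same as the set of isomorphism classes of Schur functors, and the rig operations are addition given by $[x] + [y] = [x \oplus y]$ with additive unit $[0]$, and multiplication given by $[x] \cdot [y] = [x \otimes y]$ with multiplicative unit $[I]$, as described in the preamble to this subsection. By definition, this rig is $\Lambda_+$.

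Second, I would apply the group completion functor
\[
\Z \otimes_\N - \maps \Rig \to \Ring
\]
discussed earlier, which assigns to any rig its universal ring receiving a rig homomorphism from it. Applied to $\Lambda_+$, this yields a ring $\Z \otimes_\N \Lambda_+$, which is by definition $\Lambda$. Concretely, the additive commutative monoid underlying $\Lambda_+$ is group-completed to an abelian group, and the multiplication extends uniquely (since multiplication distributes over addition and is bilinear), giving a ring structure.

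There is no real obstacle here: the work was done in the preceding lemma, which packaged the coherence between the 2-rig operations on $\ksbar$ and the rig axioms at the decategorified level via the base-change 2-functor $\pi \maps \CCat_0 \to \Set$ and \cref{cor:hom-change}. The theorem is just the named instance of this general machinery, together with the observation that the group completion of any rig is a ring. The only thing worth emphasizing in the writeup is that this defines $\Lambda_+$ and $\Lambda$ for use in the sequel; the deeper structure (the birig structure on $\Lambda_+$, and later the biring and ring-plethory structure on $\Lambda$) is taken up in the subsequent theorems.
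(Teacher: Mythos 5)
Your proof is correct and matches the paper's approach exactly: the paper derives this theorem as an immediate instantiation of the preceding lemma at $\ksbar$, followed by the standard fact that group completion sends rigs to rings. There is nothing to add.
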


\subsection{The birig structure on $\Lambda_+$}

Next we decategorify the co-operations on $\ksbar$ to make $\Lambda_+$ into a birig. The composite functor 
\[
\Cauch\Lin\ho \xrightarrow{U\ho} \CCat\ho \xrightarrow{\CCat\ho(1, -)} \Set
\]
lifts through the forgetful functor $U \maps \CMon \to \Set$ to a functor 
\[
    J \maps \Cauch\Lin\ho \to \CMon
\]
since coproducts in Cauchy complete linear categories induce addition on  isomorphism classes of objects. We would be all set if this $J$ were a strong monoidal functor, and thus equipped with natural isomorphisms
\[
    J(\C) \otimes_\N J(\D) \cong J(\C \boxtimes \D)
\]
for all Cauchy complete linear categories $\C, \D$. The co-operations on $\ksbar$ would then give co-operations on $J(\ksbar) = \Lambda_+$ making $\Lambda_+$ into a birig.

Alas, this fails in general. We certainly have a natural map $J(\C) \otimes_\N J(\D) \to J(\C \boxtimes \D)$, making $J$ into a \emph{lax} monoidal functor. The trouble is that this map is not always onto. For example, take $k = \mathbb{R}$ and let $\C$ be the linear category of finite-dimensional real representations of $\mathbb{C}$, regarded as an algebra over $\mathbb{R}$. This linear category is Cauchy complete, and every object in it is a finite coproduct of copies of $\mathbb{C}$. We thus have $J(\C) \cong \N$, and it follows that $J(\C) \otimes_\N J(\C) \cong \N$. On the other hand $\J(\C \boxtimes \C)$ is equivalent to the category of representations of $\mathbb{C} \otimes_\R \mathbb{C} \cong \mathbb{C} \oplus \mathbb{C}$, so by a similar argument $\J(\C \boxtimes \C) \cong \N^2$. 

This sort of problem can occur whenever $k$ is not algebraically closed. However, in the case of $\ksbar$ we are more fortunate.   The 2-rig $\ksbar^{\boxtimes n}$, considered in the homotopy 1-category $\TRig\ho$, is the coproduct of $n$ copies of $\ksbar$, so we have $n$ coproduct coprojections 
\[
i_1, \ldots, i_n\maps  \ksbar \to \ksbar^{\boxtimes n}.
\]
Since $J(\ksbar)^{\otimes n}$ is the coproduct in $\Rig$ of $n$ copies of $J(\ksbar)$, the maps $J(i_1), \ldots, J(i_n)$ collate into a single rig map 
\[
J(\ksbar)^{\otimes n} \to J(\ksbar^{\boxtimes n})
\]
and in fact this is an isomorphism.

\begin{lem}
\label{lem:J-preserves-coproducts}
The canonical rig map $J(\ksbar)^{\otimes n} \to J(\ksbar^{\boxtimes n})$ is an isomorphism.  
\end{lem}

\begin{proof}  
First, note that if $\C, \D \in \Cauch\Lin\ho$ and $\C$ is a coproduct of copies of $\Fin\Vect$, the canonical map of commutative monoids
\[  J(\C) \otimes_\N J(\D) \to J(\C \boxtimes \D) \]
is an isomorphism.  To see this,  assume $\C \simeq X \cdot \Fin\Vect$ for some set $X$, where we write $X \cdot$ to mean $X$-fold coproduct.  Then $J(\C) \cong X \cdot \N$ as commutative monoids, so
\[ J(\C) \otimes_\N J(\D) \simeq (\X \cdot \N) \otimes J(\D) \simeq X \cdot J(\D) \simeq J(X \cdot \D) \simeq J((X \cdot \Fin\Vect) \boxtimes \D) \simeq J(\C \boxtimes \D).\]

Since the canonical rig map $J(\ksbar)^{\otimes n} \to J(\ksbar^{\boxtimes n})$ can be obtained inductively from the canonical maps $J(\C) \otimes_\N J(\D) \to J(\C \boxtimes \D)$, it suffices to show that $\ksbar$ is, as an object of $\Cauch\Lin\ho$, a coproduct of copies of $\Fin\Vect$.

A field $k$ is a `splitting field' for a finite group $G$ precisely when its category $\Rep(G)$ of finite-dimensional representations is equivalent, as linear category, to a coproduct of copies of $\Fin\Vect$.   In this case $\Rep(G)$ is also a coproduct of copies of $\Fin\Vect$ when regarded as an object of $\Cauch\Lin\ho$.  It is well-known that that any field of characteristic zero is a splitting field for the symmetric group $S_n$ \cite[Corollary 4.16]{Lorenz}.  Since in $\Cauch\Lin\ho$ we have
\[   \ksbar \cong \sum_{m = 0}^\infty \Rep(S_n) \]
it follows that for any field $k$ of characteristic zero, $\ksbar$ is a coproduct of copies of $\Fin\Vect$ in $\Cauch\Lin\ho$.  
\end{proof}

This result immediately leads to a birig structure on $\Lambda_+ = J(\ksbar)$: each co-operation for the 2-birig $\ksbar$ induces a corresponding co-operation on $J(\ksbar)$. For example, the coaddition $\alpha \maps \ksbar \to \ksbar \boxtimes \ksbar$ pertaining to $\ksbar$ as a 2-corig object in $\TRig$ gives the coaddition $\alpha$ for $\ksbar$ as a corig object in $\TRig\ho$. Applying $J \maps \TRig\ho \to \Rig$, we define the coaddition on $J(\ksbar)$ to be the composite in $\Rig$
\[
J(\ksbar) \stackrel{J(\alpha)}{\to} J(\ksbar \boxtimes \ksbar) \cong J(\ksbar) \otimes J(\ksbar)
\]
by virtue of \cref{lem:J-preserves-coproducts}. The remaining co-operations are defined by a similar procedure, and the following result is clear. 

\begin{thm}
\label{thm:lambdaplus-forms-birig}
   $\Lambda_+ = J(\ksbar)$ with the co-operations thus defined is a birig. 
\end{thm}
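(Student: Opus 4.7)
The plan is to transport the 2-birig structure on $\ksbar$ down to $\Lambda_+ = J(\ksbar)$ via the functor $J \maps \TRig\ho \to \Rig$. The co-operations on $\Lambda_+$ have already been defined in the paragraph preceding the theorem, by composing $J$ applied to each 2-birig co-operation of $\ksbar$ with the isomorphism $J(\ksbar^{\boxtimes n}) \cong \Lambda_+^{\otimes n}$ furnished by \cref{cor:J-preserves-coproducts}. What remains is to verify that these rig homomorphisms satisfy the axioms of a rig object in $\Rig\op$.

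First I would establish that $\ksbar$ is already a corig object in $\TRig\ho\op$ with respect to the coproduct $\boxtimes$. Recall from \cref{thm:schur_2-birig} that the co-operations on $\ksbar$ arise via the 2-Yoneda lemma from the 2-rig operations $\oplus$, $\otimes$, $0$, and $I$ on the representable $\TRig(\ksbar, -) \simeq U$, i.e.\ from the 2-rig structure on each $U(R)$. The 2-rig axioms on $U(R)$ (associativity and commutativity of $\oplus$ and $\otimes$, unit laws, distributivity) hold up to coherent natural isomorphism in $\CCat$, hence as outright equalities in $\CCat\ho$. Tracing these equalities back through the 2-Yoneda correspondence yields the coassociativity, cocommutativity, counit, and bidistributivity laws for the co-operations of $\ksbar$ as genuine equalities in $\TRig\ho$.

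Second, I would apply the functor $J \maps \TRig\ho \to \Rig$. By \cref{cor:J-preserves-coproducts}, $J$ sends $\ksbar^{\boxtimes n}$ to $\Lambda_+^{\otimes n}$ on the nose for every $n \geq 0$; in particular for $n = 0$ it sends the $\boxtimes$-unit $\Fin\Vect$ to the initial rig $\N$, the $\otimes$-unit. Functoriality of $J$ then transports each equality witnessing the corig structure on $\ksbar$ in $\TRig\ho\op$ to the corresponding birig axiom on $\Lambda_+$ in $\Rig\op$.

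The main obstacle is the first step: rigorously verifying that the 2-rig coherence isomorphisms on $U(R)$, pseudonatural in $R$, descend through the 2-Yoneda correspondence to equalities of 1-morphisms in $\TRig\ho(\ksbar, \ksbar^{\boxtimes n})$. This amounts to bookkeeping with the constructions of \cref{thm:schur_2-birig}, but carries no conceptual difficulty.
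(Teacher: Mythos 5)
Your proposal is correct and takes essentially the same approach as the paper, which declares the result ``clear'' after defining the co-operations and invoking \cref{cor:J-preserves-coproducts}: like the paper, you pass from the pseudo-corig structure on $\ksbar$ to a strict corig object in $\TRig\ho$ (where coherence isomorphisms become equalities), then apply $J$ and use \cref{cor:J-preserves-coproducts} to transfer the corig axioms to $\Lambda_+$. Your explicit note about the $n=0$ case (that $J$ carries the $\boxtimes$-unit $\Fin\Vect$ to the $\otimes_\N$-unit $\N$) is a worthwhile detail that the paper leaves implicit.
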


\subsection{The rig-plethory structure on $\Lambda_+$}

Our next task is to establish a rig-plethory structure on $\Lambda_+$. The birig structure on $\Lambda_+$ induces a rig structure on the representable functor $\Rig(\Lambda_+, -)$, giving in particular a rig structure on the hom-set $\Rig(\Lambda_+, \Lambda_+)$, and a subsidiary task is to construct a suitable rig map
\[
h \maps \Lambda_+ \to \Rig(\Lambda_+, \Lambda_+)
\]
that represents plethysm. 

The function $h \maps U_\Rig\Lambda_+ \to \Rig(\Lambda_+, \Lambda_+)$ is constructed from the 2-plethysm on $\ksbar$, given by a functor
\[
U(\ksbar) \to \TRig(\ksbar, \ksbar)
\]
living as a morphism in $\CCat$. We treat this as a morphism in $\CCat\ho$, and then apply $\CCat\ho(1, -) \maps \CCat\ho \to \Set$. The result is a function 
\[
H(\ksbar) \to \TRig\ho(\ksbar, \ksbar)
\]
with the help of \cref{cor:hom-change}. Then compose the above function with the map 
\[
\TRig\ho(\ksbar, \ksbar) \to \Rig(J(\ksbar), J(\ksbar)) = \Rig(\Lambda_+, \Lambda_+)
\]
arising from the functoriality of $J$. The resulting composite is the desired function 
\[
h \maps U_\Rig(\Lambda_+) = U_\Rig(J(\ksbar)) = H(\ksbar) \to \Rig(\Lambda_+, \Lambda_+).
\]

The next battery of results show that $h$ gives $\Lambda_+$ a rig-plethory structure, following the general breakdown of $M$-plethories given in \cref{lem:M-plethory_h}, \cref{lem:M-plethory_square}, \cref{lem:M-plethory_counit}, and \cref{lem:M-plethory_coassociativity}. It should be noted the proof techniques follow essentially the same pattern over and over, used throughout the remainder of this section: 

\begin{itemize}
    \item Observe the analogous condition at the 2-rig level. This is usually trivial because the 2-plethory itself is trivial, being just an identity 2-comonad on 2-rigs. 
    \item By changing the base of enrichment from $\Cat$ to $\Set$, observe the same condition as a strictly commuting diagram in $\TRig\ho$. 
    \item Apply $H \maps \TRig\ho \to \Set$. Usually $H$ lifts through $J \maps \TRig\ho \to \Rig$, and moreover it converts hom-categories or hom-2-rigs of type $\TRig(R, S)$ appearing in the last step into hom-sets or hom-rigs $\TRig\ho(R, S)$. 
    \item Postcompose with maps $\TRig\ho(R, S) \to \Rig(J(R), J(S))$ that express functoriality of $J$, and combine with properties of $J$ (preservation of products, preservation of suitable copowers) to establish the corresponding condition at the rig level. 
\end{itemize}

We begin by checking the condition in \cref{lem:M-plethory_h}.

\begin{lem}
\label{lem:h-is-rig-map}
   The map $h$ is a rig homomorphism. 
\end{lem}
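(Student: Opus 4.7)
The plan is to verify that $h$ preserves each of the four rig operations (addition, zero, multiplication, and one) by reducing each one to a corresponding co-operation on the 2-rig $\ksbar$, and then applying $J$. The critical ingredient already in hand is \cref{cor:J-preserves-coproducts}, which gives $J(\ksbar^{\boxtimes n}) \cong \Lambda_+^{\otimes n}$; this is precisely what permits 2-birig co-operations at the categorified level to descend to birig co-operations on $\Lambda_+$, and it is where the nontrivial input from representation theory is used.

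I would treat the case of addition first, with the other cases running in parallel. Given objects $x, y$ in $\ksbar$, the 2-rig map $\overline{x \oplus y} \maps \ksbar \to \ksbar$ is, by \cref{thm:representing_U}, characterized up to isomorphism by its value $x \oplus y$ on the generator. The composite
\[
    \ksbar \xrightarrow{\alpha} \ksbar \boxtimes \ksbar \xrightarrow{\overline{x} \sqcup \overline{y}} \ksbar
\]
also has value $x \oplus y$ on the generator, since $\alpha$ sends the generator to $u \oplus v$ (where $u, v$ are the two generators of $\ksbar \boxtimes \ksbar \simeq \overline{k\S(2)}$) and the coproduct 2-rig map $\overline{x} \sqcup \overline{y}$ sends $u \mapsto x$, $v \mapsto y$ while preserving $\oplus$. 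Hence the two 2-rig maps agree in $\TRig\ho(\ksbar, \ksbar)$.

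Applying $J$ to this equality, and using \cref{cor:J-preserves-coproducts} to identify $J(\overline{x} \sqcup \overline{y})$ with the rig coproduct $J(\overline{x}) \sqcup J(\overline{y}) \maps \Lambda_+ \otimes \Lambda_+ \to \Lambda_+$, gives
\[
    h([x \oplus y]) = J(\overline{x \oplus y}) = \bigl(J(\overline{x}) \sqcup J(\overline{y})\bigr) \circ J(\alpha).
\]
The right-hand side is exactly $h([x]) + h([y])$ in $\Rig(\Lambda_+, \Lambda_+)$, by the very definition of addition induced by the birig coaddition $J(\alpha) \maps \Lambda_+ \to \Lambda_+ \otimes \Lambda_+$ from \cref{thm:lambdaplus-forms-birig}.

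The remaining cases follow by the identical pattern, swapping in the corresponding co-operation. The comultiplication $\mu \maps \ksbar \to \ksbar \boxtimes \ksbar$, characterized by $\mu(1) \cong u \otimes v$, yields $\overline{x \otimes y} \cong (\overline{x} \sqcup \overline{y}) \circ \mu$, and applying $J$ produces $h([x \otimes y]) = h([x]) \cdot h([y])$. The cozero $o \maps \ksbar \to \Fin\Vect$ and the base-change functor $i \maps \Fin\Vect \to \ksbar$ from \cref{prop:base_change} together give $\overline{0} \cong i \circ o$, matching $h([0])$ with the zero element of $\Rig(\Lambda_+, \Lambda_+)$; analogously $\overline{I} \cong i \circ \epsilon$ for the co-one gives $h([I]) = 1$. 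I do not anticipate any real obstacle here, since the birig structure on $\Lambda_+$ was constructed in \cref{thm:lambdaplus-forms-birig} precisely so that $J$ carries co-operations on $\ksbar$ to the corresponding co-operations on $\Lambda_+$; all of the delicate work has been absorbed into \cref{cor:J-preserves-coproducts}.
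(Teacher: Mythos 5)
Your proposal is correct and follows essentially the same route as the paper: both arguments hinge on (i) characterizing a 2-rig endomorphism of $\ksbar$ by its value on the generator, so that for each rig operation the corresponding pair of 2-rig maps agrees up to isomorphism, (ii) the functoriality of $J$, and (iii) \cref{cor:J-preserves-coproducts} to identify $J(\ksbar \boxtimes \ksbar)$ with $\Lambda_+ \otimes \Lambda_+$. The paper illustrates the argument with comultiplication $\mu$ and phrases it as a commuting diagram in $\CCat\ho$ that decategorifies the 2-cell witnessing that the plethysm functor $U(\ksbar) \to \TRig(\ksbar, \ksbar)$ preserves the 2-rig multiplication; you illustrate with coaddition $\alpha$ and argue directly by chasing the generator. (Minor typo: in your comultiplication paragraph, ``$\mu(1)$'' should read ``$\mu(x)$''.)
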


\begin{proof}
    We indicate why $h$ preserves the rig multiplication; preservation of the other operations is treated similarly. Letting $\mu$ denote comultiplication, we have a 2-rig map $U(\ksbar) \to \TRig(\ksbar, \ksbar)$ which means we have a 2-cell isomorphism 
    \[
    \begin{tikzcd}
        U(\ksbar) \times U(\ksbar)
        \arrow[r]
        \arrow[dd, swap, "m"]
        &
        \TRig(\ksbar, \ksbar) \times \TRig(\ksbar, \ksbar)
        \arrow[d, sloped, "\sim"]
        \\&
        \TRig(\ksbar \boxtimes \ksbar, \ksbar)
        \arrow[d, "{\TRig(\mu, 1)}"]
        \\
        U(\ksbar)
        \arrow[uur, phantom, "\cong"]
        \arrow[r]
        & 
        \TRig(\ksbar, \ksbar)
    \end{tikzcd}
    \]
    in $\CCat$. This 2-cell becomes an equation in $\CCat\ho$. Applying $
    \CCat\ho(1, -) \maps \CCat\ho \to \Set$ 
    together with \cref{cor:hom-change}, we obtain the following commutative diagram in $\Set$:
    \[
    \begin{tikzcd}
        H(\ksbar) \times H(\ksbar)
        \arrow[r]
        \arrow[dd, swap, "m"]
        &
        \TRig\ho(\ksbar, \ksbar) \times \TRig\ho(\ksbar, \ksbar)
        \arrow[d, sloped, "\sim"]
        \\&
        \TRig\ho(\ksbar \boxtimes \ksbar, \ksbar)
        \arrow[d, "{\TRig\ho(\mu, 1)}"]
        \\
        H(\ksbar)
        \arrow[r]
        & 
        \TRig\ho(\ksbar, \ksbar).
    \end{tikzcd}
    \]
    Abbreviating $Q \times Q$ to $Q^2$ to conserve space, we append to this last diagram another diagram which instantiates functoriality of $J$, and we invoke \cref{lem:J-preserves-coproducts}: 
    \[
    \begin{tikzcd}[column sep = tiny]
        H(\ksbar)^2
        \arrow[r]
        \arrow[ddd, swap, "m"]
        &
        \TRig\ho(\ksbar, \ksbar)^2
        \arrow[dd, sloped, "\sim"]
        \arrow[r]
        &
        \Rig(J(\ksbar), J(\ksbar))^2
        \arrow[d]
        \\&
        &
        \Rig(J(\ksbar)\otimes J(\ksbar), J(\ksbar))
        \arrow[d, sloped, "\sim"]
        \\
        & 
        \TRig\ho(\ksbar \boxtimes \ksbar, \ksbar)
        \arrow[d, "{\TRig\ho(\mu, 1)}"]
        \arrow[r]
        &
        \Rig(J(\ksbar \boxtimes \ksbar), J(\ksbar))
        \arrow[d, "{\Rig(J(\mu), 1)}"]
        \\
        H(\ksbar)
        \arrow[r]
        &
        \TRig\ho(\ksbar, \ksbar)
        \arrow[r]
        &
        \Rig(J(\ksbar), J(\ksbar))
    \end{tikzcd}
    \]
    The perimeter of this last diagram shows that $h$ preserves rig multiplication. 
\end{proof}

Next we check the condition in \cref{lem:M-plethory_square}.

\begin{lem}
\label{lem:condition-one}
    The map $h$ satisfies the commutativity condition of \cref{lem:M-plethory_square}.
\end{lem}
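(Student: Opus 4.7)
The plan follows the same four-step template applied in the proof of \cref{lem:h-is-rig-map}: lift the desired identity from $\Rig$ back to $\TRig\ho$ and ultimately to $\TRig$, where the 2-plethory on $\ksbar$ is the identity 2-comonad and the analogue holds trivially. Each rig co-operation $[\theta] \maps \Lambda_+ \to J \cdot \Lambda_+$ arises as the image under $J \maps \TRig\ho \to \Rig$ of a corresponding 2-rig co-operation $[\Theta] \maps \ksbar \to \ksbar^{\boxtimes J}$ of the 2-birig $\ksbar$, via the identification $J(\ksbar^{\boxtimes J}) \cong (J\ksbar)^{\otimes J} = J \cdot \Lambda_+$ supplied by \cref{cor:J-preserves-coproducts}. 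By \cref{thm:ksbar_as_2-plethory}, $\Phi$ and its left adjoint $\Psi$ are the identity 2-functor on $\TRig$, so the mated map $h_2 \maps \ksbar \to \Phi(\ksbar) = \ksbar$ is the identity 2-rig map (as is its analogue at $\ksbar^{\boxtimes J}$), and the 2-analogue of the square of \cref{lem:M-plethory_square} commutes strictly in $\TRig$.

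Next I will unpack how $h$ is assembled at the rig level, and build a parallel realization of $h_J$. The text constructs $h$ from the functor $U(\ksbar) \to \TRig(\ksbar, \ksbar)$ representing $h_2$ in $\CCat$ (for the identity 2-plethory this is the Yoneda functor $x \mapsto \overline x$), followed by $\CCat\ho(1, -)$ and postcomposition with the $J$-functoriality map $\TRig\ho(\ksbar, \ksbar) \to \Rig(\Lambda_+, \Lambda_+)$. Doing the same with $\ksbar$ replaced by $\ksbar^{\boxtimes J}$ in the target, and using \cref{cor:J-preserves-coproducts} to identify $H(\ksbar^{\boxtimes J})$ with $J \cdot \Lambda_+$, yields a rig map $h'_J \maps J \cdot \Lambda_+ \to \Phi(J \cdot \Lambda_+)$. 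I will verify, from pseudonaturality of the Yoneda equivalence applied to the coproduct coprojections $i_j \maps \ksbar \to \ksbar^{\boxtimes J}$, that $h'_J \circ J(i_j) = \Phi(J(i_j)) \circ h$ for each $j$, so that by the universal property of $J \cdot \Lambda_+$ in $\Rig$, $h'_J$ coincides with the canonical $h_J$ defined in \cref{lem:M-plethory_square}.

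With $h$ and $h_J$ realized uniformly from the same 2-level data, the main body of the argument runs the four-step template on the 2-cell in $\CCat$ expressing pseudonaturality of the Yoneda equivalence $\TRig(\ksbar, -) \simeq U$ along $[\Theta]$,
\[
\begin{tikzcd}
U(\ksbar) \arrow[r, "U([\Theta])"] \arrow[d] & U(\ksbar^{\boxtimes J}) \arrow[d] \\
\TRig(\ksbar, \ksbar) \arrow[r, "{[\Theta] \circ (-)}", swap] & \TRig(\ksbar, \ksbar^{\boxtimes J}),
\end{tikzcd}
\]
which commutes up to coherent isomorphism in $\CCat$ and therefore strictly in $\CCat\ho$. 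Applying $\CCat\ho(1, -)$, postcomposing with the rig-valued functoriality of $J$, and collapsing $J(\ksbar^{\boxtimes J})$ to $\Lambda_+^{\otimes J}$ via \cref{cor:J-preserves-coproducts}, the commutativity of the pasted diagram reads off precisely as the desired equation $h_J \circ [\theta] = \Phi([\theta]) \circ h$ in $\Rig$.

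The main obstacle I anticipate is bookkeeping: (a) confirming that the $h'_J$ assembled uniformly from the 2-plethysm on $\ksbar^{\boxtimes J}$ genuinely agrees with the canonical $h_J$ characterized by its restrictions to coprojections, and (b) keeping the representability equivalence $U\Phi \simeq \TRig(\ksbar, -)$ coherent as $\ksbar$ is replaced by $\ksbar^{\boxtimes J}$ in the target of the relevant hom-category. Once these identifications are pinned down, the chase proceeds exactly as in \cref{lem:h-is-rig-map}, with the trivial commutativity at the 2-level transported intact through each layer of decategorification.
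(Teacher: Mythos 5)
Your proposal is correct and follows essentially the same route as the paper's proof: both run the four-step decategorification template on the pseudonaturality square of the representability equivalence $U \simeq \TRig(\ksbar, -)$ (which the paper packages as the pseudonatural equivalence $\eta \maps 1_{\TRig} \To \Phi_{\ksbar}$), invoke \cref{cor:J-preserves-coproducts} to identify $J(\ksbar^{\boxtimes n})$ with the copower $\Lambda_+^{\otimes n}$, and pin down $h_J$ by its restrictions along the coproduct coprojections $i_j$. The two bookkeeping points you flag at the end are precisely where the paper invests its care, and the identifications you anticipate do go through.
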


\begin{proof}
    The pseudonatural equivalence 
    \[
    \eta\maps 1_{\TRig} \To \Phi_{\ksbar}
    \]
    on $\TRig$ has as its component at $\ksbar$ the canonical 2-rig map 
    \[
    \eta' = \eta(\ksbar) \maps \ksbar \to \Phi_{\ksbar}(\ksbar) 
    \]
    that provides the 2-plethysm on $\ksbar$. The map 
    \[
    \eta_n' = \eta(\ksbar^{\boxtimes n}) \maps \ksbar^{\boxtimes n} \to \Phi_{\ksbar}(\ksbar^{\boxtimes n})
    \]
    is, by pseudonaturality of $\eta$, the (unique up to isomorphism) 2-rig map that makes the following square commute up to isomorphism:
    \[
    \begin{tikzcd}[column sep = large]
        \ksbar 
        \arrow[r, "\eta' = \eta(\ksbar)"] 
        \arrow[d, swap, "i_j"] 
        & 
        \Phi_{\ksbar}(\ksbar) 
        \arrow[d, "\Phi_{\ksbar}(i_j)"] 
        \\ 
        \ksbar^{\boxtimes n} 
        \arrow[r, swap, "\eta_n' = \eta(\ksbar^{\boxtimes n})"] 
        & 
        \Phi_{\ksbar}(\ksbar^{\boxtimes n}).
    \end{tikzcd}
    \]
    It follows, again by pseudonaturality, for each 2-birig co-operation $\lbrack\Theta\rbrack \maps \ksbar \to \ksbar^{\boxtimes n}$ we have a square in $\TRig$ that commutes up to 2-cell isomorphism:
    \[
    \begin{tikzcd}
        \ksbar \arrow[r, "\eta'"] \arrow[d, swap, "\lbrack\Theta\rbrack"] & \Phi_{\ksbar}(\ksbar) \arrow[d, "\Phi_{\ksbar}(\lbrack\Theta \rbrack)"] \\ \ksbar^{\boxtimes n} \arrow[r, swap, "\eta_n'"] & \Phi_{\ksbar}(\ksbar^{\boxtimes n})
    \end{tikzcd}
    \]
    This is the 2-rig form of the commutative square in \cref{lem:M-plethory_square}. 
    
    Changing our base of enrichment along $\pi_0 \circ \mathsf{core} \maps \Cat \to \Set$ gives a strictly commuting square in $\TRig\ho$. We apply $J$ to this square to get a commuting square in $\Rig$. Now, for any 2-rig $R$, the underlying set of $J(\Phi_{\ksbar}(R))$ is 
    \[
    \CCat\ho(1, U\ho\Phi_{\ksbar}(R)) \cong \CCat\ho(1, \TRig(\ksbar, R)) \cong  \TRig\ho(\ksbar, R)
    \]
    This set acquires a rig structure, using the corig structure that $\ksbar$ has as an object in $\TRig\ho$. Thus the diagram of rigs obtained by applying $J$ may be written in the form 
    \[
    \begin{tikzcd}
        & 
        J(\ksbar) 
        \arrow[r, "J(\eta')"] 
        \arrow[dl] 
        \arrow[d, "J(\lbrack\Theta \rbrack)"] 
        & 
        \TRig\ho(\ksbar, \ksbar) 
        \arrow[d, "{\TRig\ho(\ksbar, \lbrack \Theta\rbrack)}"] 
        \\ 
        J(\ksbar)^{\otimes n} 
        \arrow[r, "\sim", swap] 
        & 
        J(\ksbar^{\boxtimes n}) 
        \arrow[r, "J(\eta_n')", swap] 
        & 
        \TRig\ho(\ksbar, \ksbar^{\boxtimes n})
    \end{tikzcd}
    \]
    and we append to this diagram a square that expresses functoriality of $J$: 
    \[
    \begin{tikzcd}
        J(\ksbar)
        \arrow[r]
        \arrow[swap, d, "\lbrack \theta \rbrack = J(\lbrack\Theta\rbrack)"]
        &
        \TRig\ho(\ksbar, \ksbar)
        \arrow[d, "{\TRig\ho(1, \lbrack\Theta\rbrack)}"]
        \arrow[r]
        &
        \Rig(J(\ksbar), J(\ksbar))
        \arrow[d, "{\Rig(1, \lbrack\theta \rbrack)}"]
        \\ 
        J(\ksbar)^{\otimes n} 
        \arrow[swap, drr, "h_n"] 
        \arrow[r, "\sim"] 
        & 
        \TRig\ho(\ksbar, \ksbar^{\boxtimes n}) 
        \arrow[r] 
        & 
        \Rig(J(\ksbar), J(\ksbar^{\boxtimes n})) 
        \arrow[d, "\sim", sloped] 
        \\&& 
        \Rig(J(\ksbar), J(\ksbar)^{\otimes n}).
    \end{tikzcd}
    \]
    The top horizontal composite is the map $h$ of \cref{lem:h-is-rig-map}. The bottom arrow is $h_n$, using the fact that $J$ preserves coproducts and a simple diagram chase. Hence this last diagram manifests the square whose commutativity is:
    \[
    \begin{tikzcd}
        \Lambda_+ 
        \arrow[r, "h"] 
        \arrow[swap, d, "\lbrack\theta \rbrack"] 
        & 
        \Phi_{\Lambda_+}(\Lambda_+) 
        \arrow[d, "\Phi_{\Lambda_+}(\lbrack\theta \rbrack)"] 
        \\
        \Lambda_+^{\otimes n} 
        \arrow[r, "h_n", swap] 
        & 
        \Phi_{\Lambda_+}(\Lambda_+^{\otimes n}) 
    \end{tikzcd}  \qedhere
    \]
\end{proof}

Next, the unit of our rig-plethory is the unique rig map 
\[
I \maps \N[x] \to J(\ksbar) = \Lambda_+
\]
that takes $x$ to $[k \cdot \S(-, 1)]$, where $k \cdot \S(-, 1)$ is the plethysm unit of the 2-plethory $\ksbar$; equivalently, the canonical generator of $\ksbar$ as free 2-rig. We now check that this rig map satisfies the condition in \cref{lem:M-plethory_counit}.

\begin{lem}
\label{prop:I-birig-map}
    The rig homomorphism $I \maps \N[x] \to \Lambda_+$ is a birig homomorphism. 
\end{lem}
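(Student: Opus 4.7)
The plan is to verify that the rig map $I$ intertwines each of the four birig co-operations. Since $I$ is a rig homomorphism by construction, the task reduces to checking four commutative squares---one each for coaddition, co-zero, comultiplication, and co-one. In every such square both composites are rig homomorphisms out of $\N[x]$, so because $\N[x]$ is the free rig on its generator $x$, it is enough to verify commutativity after evaluating at $x$.

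For each co-operation, the verification is then a short direct calculation using the explicit descriptions already in hand. On the $\N[x]$ side, the co-operations send $x$ to $x + y$, $0$, $xy$, and $1$, respectively. On the $\Lambda_+$ side, each co-operation $\beta_{\Lambda_+}$ is defined in \cref{thm:lambdaplus-forms-birig} as $J(\beta_{\ksbar})$ postcomposed with the isomorphism $J(\ksbar^{\boxtimes n}) \cong \Lambda_+^{\otimes n}$ of \cref{cor:J-preserves-coproducts}. By \cref{thm:schur_2-birig}, the 2-birig co-operations $\alpha_{\ksbar}, o_{\ksbar}, \mu_{\ksbar}, \epsilon_{\ksbar}$ send the generator $k\S(-,1)$ to $x \oplus y$, $0$, $x \otimes y$, and the monoidal unit, respectively. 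Applying $J$ to the coaddition case yields $[x \oplus y] = [x] + [y]$, which under the coproduct-splitting isomorphism corresponds to $[k\S(-,1)] \otimes 1 + 1 \otimes [k\S(-,1)]$ in $\Lambda_+ \otimes \Lambda_+$---exactly matching $I^{\otimes 2}(x + y)$. The comultiplication case matches $[k\S(-,1)] \otimes [k\S(-,1)]$ on both sides, and the counital cases trivially match $0$ and $1$ in $\N$.

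There is no genuine obstacle here; the only bookkeeping concern is to identify, canonically, the algebraic isomorphism $\N[x]^{\otimes 2} \cong \N[x, y]$ implicit in the description of the co-operations on $\N[x]$ with the isomorphism $\Lambda_+^{\otimes 2} \cong J(\ksbar \boxtimes \ksbar)$ of \cref{cor:J-preserves-coproducts}. Both are induced by coproduct coprojections in $\Rig$, and by functoriality of $J$ on coprojections these identifications are compatible with $I^{\otimes 2}$, so the matching is automatic.
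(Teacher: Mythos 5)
Your proposal is correct and takes essentially the same approach as the paper: since both legs of each square are rig maps out of the free rig $\N[x]$, it suffices to check they agree on the generator $x$, and this is verified by tracing $x$ through the explicit co-operations. The paper spells out only the coaddition case as an illustration; you outline all four and also explicitly note the compatibility of the coproduct-splitting isomorphisms, which the paper leaves implicit.
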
 

\begin{proof}
    The proof is a straightforward computation; we check that $I$ preserves coaddition to illustrate. The 2-rig coaddition $\alpha \maps \ksbar \to \ksbar \boxtimes \ksbar$ is the unique 2-rig map that takes the generator $X = k \cdot \S(-, 1)$ to $X \boxtimes 1 \oplus 1 \boxtimes X$. The composite 
    \[
    J(\ksbar) \stackrel{J(\alpha)}{\to} J(\ksbar \boxtimes \ksbar) \cong J(\ksbar) \otimes J(\ksbar)
    \]
    which defines coaddition on $\Lambda_+ = J(\ksbar)$ takes the isomorphism class $[X]$ to $[X]\otimes [1] + [1] \otimes [X]$. Hence $\alpha_{\Lambda_+} \circ I$ takes $x$ to $[X]\otimes [1] + [1] \otimes [X]$. Clearly 
    \[
    \N[x] \to \N[x] \otimes \N[x] \xrightarrow{I \otimes I} \Lambda_+ \otimes \Lambda_+ 
    \]
    takes $x$ also to $[X]\otimes [1] + [1] \otimes [X]$, since the coaddition on $\N[x]$ takes $x$ to $x \otimes 1 + 1 \otimes x$. Since the two legs of 
    \[
    \begin{tikzcd}
        \N[x] 
        \arrow[r, "I"] 
        \arrow[swap, d, "\alpha"] 
        & 
        \Lambda_+ 
        \arrow[d, "\alpha_{\Lambda_+}"] 
        \\
        \N[x] \otimes \N[x] 
        \arrow[r, "I \otimes I", swap] 
        & 
        \Lambda_+ \otimes \Lambda_+
    \end{tikzcd}
    \]
    take $x$ to the same element, they must be the same rig map. 
\end{proof}

Finally we check the condition in \cref{lem:M-plethory_coassociativity}.

\begin{lem}
\label{prop:monoid-equations}
    The plethysm multiplication and unit for $\Lambda_+$ satisfy the monoid equations. 
\end{lem}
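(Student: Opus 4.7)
The plan is to follow the now-standard four-step recipe used throughout this section: start from the corresponding statement at the 2-rig level, transport it to $\TRig\ho$ by change of base along $\pi_0 \circ \core$, apply $J \maps \TRig\ho \to \Rig$, and then glue on the naturality squares that witness functoriality of $J$ together with \cref{cor:J-preserves-coproducts}. By \cref{thm:ksbar_as_2-plethory}, the 2-plethory we start with is the identity 2-comonad on $\TRig$, so the associativity and unit constraints for its plethystic multiplication $\eta' \maps \ksbar \to \Phi_\ksbar(\ksbar)$ are simply the coherences of an identity 2-comonad. In particular, the associativity diagram
\[
\begin{tikzcd}
\ksbar \arrow[r, "\eta'"] \arrow[d, swap, "\eta'"]
& \Phi_\ksbar(\ksbar) \arrow[d, "\Phi_\ksbar(\eta')"]
\\
\Phi_\ksbar(\ksbar) \arrow[r, swap, "\delta_\ksbar"]
& \Phi_\ksbar \Phi_\ksbar (\ksbar)
\end{tikzcd}
\]
and the counit triangles for $\varepsilon$ commute up to canonical 2-cell isomorphism in $\TRig$.

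First I would change the base of enrichment along $\pi_0 \circ \core \maps \Cat \to \Set$ to promote these invertible 2-cells into literal equations in the 1-category $\TRig\ho$. Then I would apply the composite $H = \CCat\ho(1, -) \circ U\ho$ to this square; using $H(\Phi_\ksbar(R)) \cong \TRig\ho(\ksbar, R)$ (as in the proof of \cref{lem:condition-one}), the associativity equation in $\TRig\ho$ becomes an equation of functions whose two legs give, respectively, $f \mapsto \eta' \circ (\eta' \circ f)$ and $f \mapsto (\eta' \circ \eta') \circ f$ (up to the identification). Post-composing with the canonical map $\TRig\ho(\ksbar, \ksbar) \to \Rig(\Lambda_+, \Lambda_+)$ coming from functoriality of $J$, and unpacking via the definition of $h \maps \Lambda_+ \to \Rig(\Lambda_+, \Lambda_+)$ given before \cref{lem:h-is-rig-map}, one obtains exactly the associativity diagram for the plethysm as formulated in \cref{lem:M-plethory_coassociativity}.

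Second I would handle the unit equations the same way. The 2-rig unit $\eta' \maps \ksbar \to \Phi_\ksbar(\ksbar)$ exhibits the canonical generator $k \cdot \S(-,1)$ as the plethystic identity, and the counit $\varepsilon \maps \Phi_\ksbar \To 1_\TRig$ at $\ksbar$ is represented, by \cref{lem:M-plethory_counit}, by the birig map $\eta \maps k \cdot \S(-,1) \to \ksbar$ picking out the generator. After decategorifying, these become the rig maps $I \maps \N[x] \to \Lambda_+$ of \cref{prop:I-birig-map} and the plethystic unit map; the unit triangles at the 2-rig level then pass through $J$ to give the left and right unit axioms for $I$ as plethystic unit of $\Lambda_+$.

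The main obstacle, as in \cref{lem:condition-one}, is purely bookkeeping: one must carefully track how the associativity diagram, which a priori lives as a diagram of endofunctors of $\TRig$, translates into a diagram involving the iterated hom-sets $\TRig\ho(\ksbar, \ksbar)$ and $\TRig\ho(\ksbar, \Phi_\ksbar(\ksbar))$, and then into $\Rig(\Lambda_+, \Lambda_+)$ and $\Rig(\Lambda_+, \Rig(\Lambda_+, \Lambda_+))$ once one post-composes with the functoriality-of-$J$ maps. Once this diagram chase is laid out---mirroring precisely the diagram chase in the proof of \cref{lem:condition-one} but with $\lbrack \Theta \rbrack$ replaced by $\eta'$ (respectively by $\varepsilon$)---the associativity and unit axioms for the plethysm on $\Lambda_+$ drop out as the perimeter, completing the proof.
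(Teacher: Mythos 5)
Your proposal is correct and follows the same decategorification machinery the paper uses; the paper's own proof just takes a shortcut, invoking \cref{cor:pleth} to observe that the de-curried map $(\rho,\tau) \mapsto \tau \comp \rho$ is associative (and unital) up to isomorphism on $U(\ksbar)$, whence passing to isomorphism classes gives associativity and unitality on the nose. Your more systematic chase through the change-of-base, $H$, functoriality of $J$, and \cref{lem:M-plethory_coassociativity} arrives at the same conclusion, just with more bookkeeping than the paper bothers to write out.
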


\begin{proof}
    The 2-plethysm multiplication for $\ksbar$ comes from the functor 
    \[
    \eta \maps U(\ksbar) \to \TRig(\ksbar, \ksbar) 
    \]
    given on objects by $\rho \mapsto (\tau \mapsto \tau \comp \rho)$. Its transpose $(\rho, \tau) \mapsto \tau \comp \rho$ is associative up to isomorphism by \cref{cor:pleth}. The usual decategorification procedure (interpret this map in $\CCat\ho$, apply $\CCat\ho(1, -)$, and compose with the action of $J$ on homs) leads to the plethysm structure on $\Lambda_+$
    \[ 
    U\Lambda_+ = H\ksbar \to \TRig\ho(\ksbar, \ksbar) \to \Rig(J(\ksbar), J(\ksbar)) = \Rig(\Lambda_+, \Lambda_+) 
    \]
    which is the following well-defined map on isomorphism classes: 
    \[ 
    [\rho] \mapsto ([\tau] \mapsto [\tau \comp \rho])
    \]
    Thus, the binary operation $([\rho], [\tau]) \mapsto [\tau \comp \rho]$ is associative on the nose. The unit equations also follow by applying  \cref{cor:pleth} and passing to isomorphism classes. 
\end{proof}

Thanks to the fourth perspective on $M$-plethories developed in Lemmas \ref{lem:M-plethory_h}--\ref{lem:M-plethory_coassociativity}, by proving Lemmas \ref{lem:h-is-rig-map}--\ref{prop:monoid-equations} we have completed the proof of the following theorem: 

\begin{thm}
\label{thm:lambdaplus-forms-rig-plethory}
    The 2-plethory structure on $\ksbar$ induces a rig-plethory structure on $\Lambda_+ = J(\ksbar)$, the rig of positive symmetric functions. 
\end{thm}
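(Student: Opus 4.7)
The plan is to invoke the fourth perspective on $M$-plethories developed in Lemmas \ref{lem:M-plethory_h}--\ref{lem:M-plethory_coassociativity}, specialized to the case where $M$ is the monad for rigs. According to that perspective, a rig-plethory structure on the birig $\Lambda_+$ (which we have by \cref{thm:lambdaplus-forms-birig}) amounts to the following data: a rig map $h \maps \Lambda_+ \to \Phi_{\Lambda_+}(\Lambda_+) = \Rig(\Lambda_+, \Lambda_+)$ making the square of \cref{lem:M-plethory_square} commute for every co-operation of $\Lambda_+$; a birig unit $\eta \maps \N[x] \to \Lambda_+$; and the monoidal associativity and unit equations for the resulting plethysm multiplication. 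So I would proceed by producing $h$ and $\eta$ by decategorifying the 2-plethory structure on $\ksbar$, and then checking each of these four conditions in turn.

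For $h$, I would start from the 2-plethysm functor $U(\ksbar) \to \TRig(\ksbar, \ksbar)$, interpret it as a morphism in the homotopy 1-category $\CCat\ho$, apply $\CCat\ho(1, -)$ (using \cref{cor:hom-change} to identify $\CCat\ho(1, \TRig(\ksbar, \ksbar))$ with $\TRig\ho(\ksbar, \ksbar)$), and then post-compose with the functoriality map $\TRig\ho(\ksbar, \ksbar) \to \Rig(J(\ksbar), J(\ksbar))$; the result is a function $h \maps \Lambda_+ \to \Rig(\Lambda_+, \Lambda_+)$. For $\eta$, I would take the unique rig map $\N[x] \to \Lambda_+$ sending $x$ to the isomorphism class of the generating object of $\ksbar$, i.e.\ the class corresponding to the plethysm unit of the 2-plethory. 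The verification of the four required conditions will then follow a uniform template that is already in evidence in Lemmas \ref{lem:h-is-rig-map}, \ref{lem:condition-one}, \ref{prop:I-birig-map}, and \ref{prop:monoid-equations}: observe the corresponding fact at the 2-rig level (typically trivial, since the 2-plethory $\Phi = 1 \maps \TRig \to \TRig$ is the identity 2-comonad by \cref{thm:ksbar_as_2-plethory}), promote it to a strictly commuting diagram in $\TRig\ho$ by changing base of enrichment along $\pi_0 \circ \core$, apply $J$, and append squares expressing functoriality of $J$ on hom-categories to transport everything into $\Rig$.

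The main obstacle, and the only place where something non-formal happens, is ensuring that $J$ plays well with the coproducts $\ksbar^{\boxtimes n}$ that appear as codomains of the birig co-operations: a co-operation $[\Theta] \maps \ksbar \to \ksbar^{\boxtimes n}$ in $\TRig\ho$ only yields a co-operation $[\theta] \maps \Lambda_+ \to \Lambda_+^{\otimes n}$ in $\Rig$ if we know the canonical comparison rig map $J(\ksbar)^{\otimes n} \to J(\ksbar^{\boxtimes n})$ is an isomorphism. This fails for $J$ applied to arbitrary Cauchy complete linear categories, but it does hold here by \cref{cor:J-preserves-coproducts}, which rests on the representation-theoretic input of \cref{lem:irreps-of-products}: over a field of characteristic zero, every irreducible representation of a product of symmetric groups factors uniquely as a tensor of irreducibles of the factors. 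Once this is in hand, the four verifications of \cref{lem:h-is-rig-map}--\ref{prop:monoid-equations} go through by the template above, and combining them via the fourth perspective yields the desired rig-plethory structure on $\Lambda_+$.
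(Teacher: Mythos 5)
Your proposal follows the paper's proof essentially verbatim: it invokes the fourth perspective on $M$-plethories from Lemmas \ref{lem:M-plethory_h}--\ref{lem:M-plethory_coassociativity}, constructs $h$ and the unit by decategorifying the 2-plethysm and the generator of $\ksbar$, verifies the four conditions via the paper's own Lemmas \ref{lem:h-is-rig-map}--\ref{prop:monoid-equations} (with the same uniform template of passing to $\TRig\ho$ and applying $J$), and correctly identifies \cref{cor:J-preserves-coproducts} (via \cref{lem:irreps-of-products}) as the non-formal ingredient. This is the paper's argument, so there is nothing further to compare.
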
 

\section{The ring-plethory of symmetric functions}
\label{sec:lambda}

The preceding development shows that the rig $\Lambda_+ = J(\ksbar)$ carries a canonical rig-plethory structure, giving a right adjoint comonad on $\Rig$ whose underlying functor
\[\Phi_{\Lambda_+} \maps \Rig \to \Rig\]
lifts $\hom(\Lambda_+, -) \maps \Rig \to \Set$ through the underlying-set functor $U \maps \Rig \to \Set$. What we now show is that the actual Grothendieck ring 
\[K(\ksbar) = \Z \otimes_\N J(\ksbar), \]
also denoted $\Lambda = \Z \otimes_\N \Lambda_+$, similarly carries the structure of a ring-plethory, more commonly known simply as a plethory. 

By way of background, group completion is the functor $\CMon \to \Ab$ left adjoint to the full inclusion functor $\Ab \hookrightarrow \CMon$. In terms of the symmetric monoidal product $\otimes_\N$ on $\CMon$, it is the functor that sends a commutative monoid $A$ to $\Z \otimes_\N A$.

\begin{lem}
The functor $\Z\otimes_\N - \maps (\CMon, \otimes_\N) \to (\Ab\Grp, \otimes)$ is symmetric strong monoidal.
\end{lem}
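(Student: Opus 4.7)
The plan is to exploit the fact that $F = \Z\otimes_\N -$ is left adjoint to the full inclusion $U\colon \Ab\Grp \hookrightarrow \Comm\Mon$, and to invoke doctrinal adjunction to reduce the problem to checking that two comparison maps are isomorphisms.

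First I would verify that $U$ is lax symmetric monoidal with its tensor constraint invertible. For abelian groups $A$ and $B$, the universal $\N$-bilinear map $A \times B \to A \otimes_\N B$ is automatically $\Z$-bilinear: from $0 = 0 \otimes b = (-a + a) \otimes b = (-a)\otimes b + a\otimes b$ one extracts $(-a)\otimes b = -(a\otimes b)$, so $A \otimes_\N B$ already carries abelian group structure and may be canonically identified with $A \otimes_\Z B$. This supplies a natural isomorphism $U(A)\otimes_\N U(B) \xrightarrow{\sim} U(A \otimes_\Z B)$. The unit comparison $\N \to U(\Z)$ is the evident inclusion and is not an isomorphism---but this is immaterial, because by doctrinal adjunction the left adjoint $F$ inherits a canonical \emph{oplax} symmetric monoidal structure with comparison maps
\[
\varphi_{A,B}\colon F(A \otimes_\N B) \to F(A) \otimes F(B), \qquad \varphi_0\colon F(\N) \to \Z,
\]
whose coherence axioms (associativity, unit, symmetry) are automatic.

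The remaining task is to show $\varphi_0$ and $\varphi_{A,B}$ are invertible. For $\varphi_0$ this is immediate: $F(\N) = \Z \otimes_\N \N \cong \Z$, since $\N$ is the monoidal unit of $\Comm\Mon$. For $\varphi_{A,B}$ I would argue by Yoneda. For any abelian group $G$, the adjunction $F \dashv U$ together with the universal property of $\otimes_\N$ yield
\[
\Ab\Grp(F(A \otimes_\N B), G) \;\cong\; \Comm\Mon(A \otimes_\N B, UG) \;\cong\; \mathrm{Bilin}_\N(A, B; UG),
\]
whereas
\[
\Ab\Grp(FA \otimes FB, G) \;\cong\; \mathrm{Bilin}_\Z(FA, FB; G).
\]
Because $G$ admits additive inverses, any $\N$-bilinear map $A \times B \to G$ extends uniquely to a $\Z$-bilinear map $FA \times FB \to G$ by invoking the universal property of $F$ separately in each variable. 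Naturality in $G$ together with the Yoneda lemma then produces the required isomorphism.

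The main obstacle, such as it is, is purely bookkeeping: one must check that the isomorphism produced by Yoneda coincides with the canonical $\varphi_{A,B}$ supplied by doctrinal adjunction, which is done by chasing the adjunction unit $A \to UFA$ through the identifications above. This is the familiar base-change-of-modules principle translated from commutative rings to commutative semirings, so no deeper difficulty is anticipated.
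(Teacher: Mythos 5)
Your proof is correct, but it takes a genuinely different route from the paper's. The paper simply exhibits the strong monoidal structure isomorphism directly, as in the classical extension-of-scalars argument for ring modules:
\[
(\Z \otimes_\N A) \otimes_\Z (\Z \otimes_\N B) \cong ((\Z \otimes_\N A) \otimes_\Z \Z) \otimes_\N B \cong (\Z \otimes_\N A) \otimes_\N B \cong \Z \otimes_\N (A \otimes_\N B),
\]
a three-step chain of associativity and unit isomorphisms for bimodule tensor products. You instead invoke doctrinal adjunction: establish that the inclusion $U\colon \Ab \hookrightarrow \Comm\Mon$ is lax symmetric monoidal with invertible tensor constraint (a nice observation in its own right, resting on the fact that $A \otimes_\N B$ is already a group when $A$ and $B$ are), obtain the canonical oplax structure on the left adjoint $F = \Z \otimes_\N -$ for free, and then show the two comparison maps are invertible---the unit one by direct inspection and the binary one by a representability argument. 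The paper's route is shorter and more self-contained; yours has the virtue of making all coherence conditions automatic and of isolating the single real content (the bijection between $\N$-bilinear maps into a group and $\Z$-bilinear maps from the group completions) as a clean universal-property statement. The bookkeeping step you flag at the end---that the Yoneda isomorphism really is the doctrinal comparison $\varphi_{A,B}$---does need to be done, but it is routine: $\Ab(\varphi_{A,B}, G)$ unwinds to precisely the restriction map $\mathrm{Bilin}_\Z(FA, FB; G) \to \mathrm{Bilin}_\N(A, B; UG)$ along the unit, which is the bijection you constructed.
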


\begin{proof}  
    Just as in the more familiar case of ring modules, extension of scalars from modules of a rig to modules of a larger rig is a symmetric strong monoidal functor:
    \begin{align*} 
        (\Z \otimes_\N A) \otimes_\Z (\Z \otimes_\N B) 
        &\cong ((\Z \otimes_\N A) \otimes_\Z \Z) \otimes_\N B 
        \\&\cong (\Z \otimes_\N A) \otimes_\N B 
        \\&\cong  \Z \otimes_\N (A \otimes_\N B). \qedhere
    \end{align*}
\end{proof}
Being a (strong) symmetric monoidal functor, group completion takes commutative $\otimes_\N$-monoids in $\CMon$, which are commutative rigs, to commutative $\otimes$-monoids in $\Ab$ which are commutative rings. In fact, the functor $\Z \otimes_\N - \maps \Rig \to \Ring$ is left adjoint to the full embedding $i \maps \Ring \hookrightarrow \Rig$. It follows that 
\[
\Ring \xhookrightarrow{i} \Rig \xrightarrow{\Phi_{\Lambda_+}} \Rig
\]
is the lift of the representable functor $\Ring(\Lambda, -) \maps \Ring \to \Set$ through $U \maps \Rig \to \Set$, because 
\begin{align*} 
    U \circ \Phi_{\Lambda_+} \circ i 
    &\cong \Rig(\Lambda_+, -) \circ i 
    = \Rig(\Lambda_+, i-) 
    \\&\cong \Ring(\Z \otimes_\N \Lambda_+, -) 
    = \Ring(\Lambda, -).
\end{align*}

This lift gives the $\Set$-valued representable $\Ring(\Lambda, -)$ a rig structure $\Phi_{\Lambda_+} \circ i$, and our first task is to see that this extends to a ring structure, thus making $\Lambda$ a biring. Such a biring extension is unique up to unique isomorphism when it exists, because the embedding $i \maps \Ring \hookrightarrow \Rig$ is full and faithful. Put differently, a rig structure can be a ring structure in at most one way; we are simply asking ``does this rig structure on $\Ring(\Lambda, -)$ have the \emph{property} of being a ring structure?"

To show it does, it is necessary and sufficient to identify a suitable co-negation on $\Lambda$, where `negation' is in the sense of `additive inverse'. A co-negation is a ring map 
\[\nu \maps \Lambda \to \Lambda\] 
such that the following diagram in the category of rings commutes.
\begin{equation}
\label{eq:conegation}
\begin{tikzcd}
    &
    \Lambda \otimes \Lambda
    \arrow[rr, "1 \otimes \nu"]
    &&
    \Lambda \otimes \Lambda
    \arrow[dr, "\nabla_\Lambda"]
    \\
    \Lambda
    \arrow[ur, "\alpha_\Lambda"]
    \arrow[dr, swap, "\alpha_\Lambda"]
    \arrow[rr, "o_\Lambda"]
    &&
    \Z
    \arrow[rr, "!"]
    &&
    \Lambda
    \\
    &
    \Lambda \otimes \Lambda
    \arrow[rr, swap, "\nu \otimes 1"]
    &&
    \Lambda \otimes \Lambda
    \arrow[ur, swap, "\nabla_\Lambda"]
\end{tikzcd}
\end{equation}
Again, such a co-negation is unique if it exists. By cocommutativity of coaddition $\alpha_\Lambda$, commutativity of either pentagon implies commutativity of the other. Thus we focus on the top pentagon, which we call the \define{co-negation equation}. 

Before constructing the co-negation $\nu$, we review the Grothendieck ring $G(R)= \Z \otimes_{\mathbb{N}} R$ of a rig $R$, with a view toward categorification. First, $G(R)$ is a quotient rig of a rig $R[x]/(x^2 = 1)$. This rig is the same as the group rig $R[\Z_2]$, whose underlying additive monoid is $R \times R$ (regarding elements $a + bx$ modulo $(x^2-1)$ as ordered pairs $(a, b)$), and in which elements are multiplied by the rule 
\[(a, b) \cdot (a', b') = (aa' + bb', ab' + a'b).\]
Then, to form $G(R)$ as a quotient of $R[\Z_2]$, $q \maps R[\Z_2] \to G(R)$, one introduces a rig-congruence relation on $R[\Z_2]$, generated by a symmetric transitive relation $\sim$ defined by 
\[(a, b) \sim (c, d)\;\;\; \Leftrightarrow \;\;\; a+d = b + c\]
and we denote the $\sim$-equivalence class of $(a, b)$ by $a - b$. If the additive monoid of $R$ is cancellative, then $\sim$ is already transitive. Note that $\Lambda_+ = J(\ksbar)$ is cancellative because it is a free $\N$-module, by Maschke's theorem. 

The categorified analog of $\Lambda_+[\Z_2]$ is the 2-rig of $\Z_2$-graded Schur functors, which we denote as $\G$. The underlying category of $\G$ is the product $\ksbar \times \ksbar$, whose objects we write as $(C_0, C_1)$. This category can be equivalently described as the category of $\Z_2$-graded polynomial species, i.e.\ linearly enriched functors valued in $\Z_2$-graded finite-dimensional vector spaces, 
\[
F \maps k\S \to \mathsf{Gr}_{\Z_2}(\Fin\Vect),
\]
for which all but finitely many values $F(n)$ are zero. 

The tensor product is the usual graded tensor, and exactly mirrors multiplication in the group rig $R[\Z_2]$: 
\[
    (C_0, C_1) \otimes (D_0, D_1) = ((C_0 \otimes D_0) \oplus (C_1 \otimes D_1), (C_0 \otimes D_1) \oplus (C_1 \otimes D_0)).
\]
The symmetry 
\[
    \sigma_{C,D} \maps (C_0, C_1) \otimes (D_0, D_1) \to (D_0, D_1) \otimes (C_0, C_1)
\]
is the standard one involving a sign convention. Namely, if $C = (C_0, C_1)$ and $D = (D_0, D_1)$ are two graded Schur objects, then 
\begin{align*}
    (\sigma_{C, D})_0 &= \sigma_{C_0, D_0} \oplus -\sigma_{C_1, D_1} \maps (C_0 \otimes D_0) \oplus (C_1 \otimes D_1) \to (D_0 \otimes C_0) \oplus (D_1 \otimes C_1); 
    \\
    (\sigma_{C, D})_1 &= \sigma_{C_0, D_1} \oplus \sigma_{C_1, D_0} \maps (C_0 \otimes D_1) \oplus (C_1 \otimes D_0) \to (D_1 \otimes C_0) \oplus (D_0 \otimes C_1).
\end{align*}

\begin{lem}
    We have an isomorphism of rigs $\Lambda_+[\Z_2] \cong J(\G)$.
\end{lem}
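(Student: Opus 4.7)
The plan is to exhibit an explicit isomorphism of rigs, leveraging the fact that the tensor product on $\G$ was defined precisely to mirror the multiplication in $\Lambda_+[\Z_2]$.

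First, I would identify the underlying additive monoids. Since the underlying category of $\G$ is the product $\Schur \times \Schur$, and biproducts in $\G$ are computed componentwise (so that $(C_0, C_1) \oplus (D_0, D_1) = (C_0 \oplus D_0, C_1 \oplus D_1)$), passing to isomorphism classes yields a canonical isomorphism of commutative monoids
\[
    J(\G) \cong J(\Schur) \times J(\Schur) = \Lambda_+ \times \Lambda_+,
\]
which matches the underlying additive monoid of $\Lambda_+[\Z_2]$. I would use this identification to define the candidate map $\phi \maps \Lambda_+[\Z_2] \to J(\G)$ by sending $(a,b)$ to the class of any pair $(C_0, C_1)$ with $[C_0] = a$ and $[C_1] = b$; this is a bijection of additive monoids by construction.

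Next, I would check that $\phi$ preserves multiplication. Taking isomorphism classes of the displayed tensor product formula
\[
    (C_0, C_1) \otimes (D_0, D_1) = \bigl((C_0 \otimes D_0) \oplus (C_1 \otimes D_1),\; (C_0 \otimes D_1) \oplus (C_1 \otimes D_0)\bigr),
\]
and using that $J$ is a strong monoidal functor on $\Schur$ (so $[C \otimes D] = [C] \cdot [D]$ in $\Lambda_+$) together with additivity of $J$ on biproducts, I obtain
\[
    [(C_0, C_1) \otimes (D_0, D_1)] = \bigl([C_0][D_0] + [C_1][D_1],\; [C_0][D_1] + [C_1][D_0]\bigr),
\]
which is exactly the formula for $(a,b) \cdot (a', b')$ in $\Lambda_+[\Z_2]$. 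The multiplicative unit is sent to the unit because the tensor unit of $\G$ is $(I, 0)$, whose isomorphism class corresponds to $(1,0) \in \Lambda_+[\Z_2]$. Because associativity, commutativity and distributivity in $J(\G)$ are inherited from the symmetric monoidal structure on $\G$ (and, crucially, the sign twist in the symmetry $\sigma_{C,D}$ does not affect isomorphism classes of the underlying objects), no extra verification is required to conclude that $\phi$ is a rig isomorphism.

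There is no genuine obstacle here; the only point worth pausing on is the observation that although the symmetry of $\G$ involves the Koszul sign rule, this rule contributes only automorphisms of the underlying objects and so is invisible at the level of $J$. This will matter later, when we turn to co-negation using the homology of $\Z_2$-graded complexes, but for the present statement it means the isomorphism $\Lambda_+[\Z_2] \cong J(\G)$ is immediate from the definitions.
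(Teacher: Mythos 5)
Your proof is correct and takes the same (indeed the only natural) route: the paper states this lemma without proof, treating it as immediate from the fact that the tensor product on $\G$ was designed precisely to ``mirror multiplication in the group rig $R[\Z_2]$'' as the preceding text says, and your verification via componentwise biproducts, the displayed tensor formula, and the unit $(I,0)$ is exactly what one would write to fill that gap. Your closing remark that the Koszul sign in the symmetry is invisible at the level of $J$ is a good observation and correctly foreshadows why the sign only starts to matter once one passes to differentials.
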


Let $q \maps J(\G) \cong \Lambda_+[\Z_2] \to \Lambda$ be the quotient map. It is given by $q([C_0], [C_1]) = [C_0] - [C_1]$ for an object $(C_0, C_1)$ of $\G$.

\begin{lem}
\label{prop:J-on-boxtimes-G}
    The canonical rig map $J(\G) \otimes J(\G)\to J(\G \boxtimes \G)$ is an isomorphism. 
\end{lem}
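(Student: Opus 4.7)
My plan is to parallel the proof of \cref{cor:J-preserves-coproducts}: I will exhibit compatible $\N$-module bases on both sides and observe that the canonical rig map is a bijection of bases. Since this map is built from the coproduct coprojections $\G \to \G \boxtimes \G$ and is a rig homomorphism by construction, it suffices to check that it is an $\N$-module isomorphism, which in turn reduces to a comparison of free $\N$-module bases indexed by irreducibles.

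The underlying linear category of $\G$ is the biproduct $\Schur \oplus \Schur$, and since biproducts are absolute $\Vect$-enriched colimits preserved by $\boxtimes$ in each variable (cf.\ the proof of \cref{lem:U2}), the underlying linear category of $\G \boxtimes \G$ decomposes as a four-fold biproduct of copies of $\ksbar^{\boxtimes 2}$. Applying \cref{cor:J-preserves-coproducts} and the preceding lemma identifying $J(\G)$ with $\Lambda_+[\Z_2]$, both
\[
    J(\G) \otimes_\N J(\G) \quad\text{and}\quad J(\G \boxtimes \G)
\]
become isomorphic as $\N$-modules to $(\Lambda_+ \otimes_\N \Lambda_+)^{\oplus 4}$. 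More concretely, $J(\G)$ has an $\N$-module basis indexed by pairs $([\rho], \epsilon)$ with $\rho$ an irreducible representation of some $S_n$ and $\epsilon \in \Z_2$. For $J(\G \boxtimes \G)$ I need the analogue of \cref{lem:irreps-of-products} for groups of the form $S_{n_1} \times S_{n_2} \times \Z_2 \times \Z_2$; the argument given there goes through unchanged, since $\Q$ is manifestly a splitting field for $\Z_2$ and hence for any product of symmetric groups with copies of $\Z_2$. Each basis element of $J(\G \boxtimes \G)$ therefore factors uniquely as an external tensor product $[\rho_1] \otimes [\rho_2] \otimes \epsilon_1 \otimes \epsilon_2$, and tracing through the identifications shows that the canonical rig map carries the basis pair $([\rho_1] \otimes \epsilon_1,\; [\rho_2] \otimes \epsilon_2)$ precisely to this external tensor product. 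It is therefore a bijection of bases, and the desired isomorphism follows.

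The step requiring the most care is the biproduct decomposition of $\G \boxtimes \G$ at the level of underlying linear categories. The Koszul sign rule entering the symmetric monoidal structure of $\G$ affects only the symmetry 2-cells, not its structure as a Cauchy complete linear category, so the usual distributivity of $\boxtimes$ over biproducts in $\Cauch\Lin$ applies without modification. Once this distributivity is in hand, the remainder of the argument is a straightforward bookkeeping exercise with irreducibles.
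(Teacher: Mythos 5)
The paper states this lemma without proof, so there is no fixed argument to compare against; your overall plan---reduce to a bijection of free $\N$-module bases indexed by irreducibles, following \cref{cor:J-preserves-coproducts}---is the natural one and does reach the right conclusion. There is, however, a misattributed justification at the pivotal step. The absoluteness cited from the proof of \cref{lem:U2} concerns biproducts \emph{of objects within a single linear category}: it is what makes $(c \oplus c') \otimes d \cong (c \otimes d) \oplus (c' \otimes d)$ automatic inside $\C \boxtimes \D$. It does not, by itself, establish that $\boxtimes$ distributes over the 2-biproduct $\ksbar \times \ksbar$ \emph{of Cauchy complete linear categories}, which is what you actually need. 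That distributivity is true, but the honest reason is different: $U(\G) \simeq \ksbar \times \ksbar$ is the Cauchy completion of the disjoint union $k\S \sqcup k\S$ of linear categories; the tensor product of linear categories distributes over disjoint unions because hom-spaces between the summands vanish; and Cauchy completion carries disjoint unions to 2-products, using $\overline{A} \boxtimes \overline{B} \simeq \overline{A \otimes B}$ from the proof of \cref{lem:U2}. As written, the reader cannot reconstruct this from the sentence you give.

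You can in fact bypass the 4-fold decomposition entirely and rely on just the second half of your argument, which also hews more closely to the proof pattern of \cref{cor:J-preserves-coproducts}. The paper's description of $U(\G)$ as $\Z_2$-graded polynomial species identifies it with $\overline{k(\S \times \Z_2)}$; hence $U(\G \boxtimes \G) \simeq \overline{k(\S \times \Z_2 \times \S \times \Z_2)}$ by the compatibility of $\overline{(-)}$ with $\otimes$ and the fact that $k(-)$ sends products of categories to tensor products of linear categories. The extension of \cref{lem:irreps-of-products} to groups $S_{n_1} \times \Z_2 \times S_{n_2} \times \Z_2$, which goes through exactly as you say since $\Q$ splits $\Z_2$, then gives the basis bijection directly. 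Your current proof runs two parallel roads to the same destination; the second is both tighter and avoids the distributivity claim you failed to fully justify. Your final remark is correct: the Koszul sign in the symmetry of $\G$ lives only in the 2-cells of the monoidal structure and is invisible to $J$, which sees only the underlying object of $\Cauch\Lin$.
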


Let $x$ abbreviate the generating object $\S(-, 1)$ (as a linearized representable) of $\ksbar$. Let $\phi_+ \maps \ksbar \to \G$ be the essentially unique 2-rig map that sends $x$ to the graded object $(x, 0) \in \G$. Let $\phi_- \maps \ksbar \to \G$ be the essentially unique 2-rig map that sends $x$ to the graded object $(0, x) \in \G$. Applying $J$, we obtain rig maps 
\[
    J(\phi_+) \maps J(\ksbar) \to J(\G), \qquad J(\phi_{-}) \maps J(\ksbar) \to J(\G).
\]

\begin{lem}
\label{prop:phi-plus-gives-id}
    The unique ring map $\Lambda \to \Lambda$ that extends the composite 
    \[
    \Lambda_+ = J(\ksbar) \xrightarrow{J(\phi_+)} J(\G) \xrightarrow{q} \Lambda
    \]
    of rig maps is the identity map on $\Lambda$. 
\end{lem}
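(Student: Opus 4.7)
The plan is to compute the composite $q \circ J(\phi_+) \maps \Lambda_+ \to \Lambda$ explicitly and recognize it as the canonical embedding of rigs; the conclusion then follows from the universal property of group completion.

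First I would pin down what $\phi_+$ does on a general object of $\ksbar$, not just on the generator. Since $\phi_+$ is the 2-rig map determined up to isomorphism by $\phi_+(x) = (x, 0)$, the formula from \cref{lem:U2} gives
\[
\phi_+(\rho) \cong \bigoplus_n \rho(n) \otimes_{k[S_n]} (x, 0)^{\otimes n}
\]
in $\G$ for any $\rho \in \ksbar$. A straightforward induction on $n$, using the definition of the graded tensor product on $\G = \Schur \times \Schur$, shows that $(x, 0)^{\otimes n} \cong (x^{\otimes n}, 0)$: all the ``odd'' summands in the graded tensor involve a factor of $0$ and therefore vanish. Since coproducts in $\G$ are taken componentwise, it follows that $\phi_+(\rho) \cong (\rho, 0)$, where $\rho$ on the right is understood via the identification $\Schur \simeq \ksbar$.

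Passing to isomorphism classes and applying the isomorphism $J(\G) \cong \Lambda_+[\Z_2]$, we get $J(\phi_+)([\rho]) = ([\rho], 0)$. Then by the definition of the quotient map, $q([\rho], 0) = [\rho] - [0] = [\rho]$ in $\Lambda$. Hence the composite $q \circ J(\phi_+) \maps \Lambda_+ \to \Lambda$ is simply the canonical rig embedding of $\Lambda_+$ into its group completion $\Lambda$.

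To finish, I would invoke the universal property of group completion: ring maps $\Lambda \to \Lambda$ are in bijection with rig maps $\Lambda_+ \to \Lambda$ via restriction along $\Lambda_+ \hookrightarrow \Lambda$. The identity ring map on $\Lambda$ restricts to this canonical embedding, and by uniqueness the ring map extending $q \circ J(\phi_+)$ must therefore be $1_\Lambda$. The only non-formal step is the computation of $(x, 0)^{\otimes n}$; this is where the choice of the ``positive'' embedding $\phi_+$ really matters, and it is the reason why $\phi_+$ by itself cannot detect co-negation --- one needs $\phi_-$ as well, as the next results will presumably exploit.
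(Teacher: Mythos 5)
Your proposal is correct and follows essentially the same route as the paper's proof: compute $(x,0)^{\otimes n} \cong (x^{\otimes n},0)$, deduce $\phi_+(\rho) \cong (\rho,0)$, push through $J$ and $q$ to see that the composite is the canonical rig inclusion $\Lambda_+ \hookrightarrow \Lambda$, and then appeal to the universal property of group completion. You simply make explicit the appeal to the formula from \cref{lem:U2} and the induction on $n$, which the paper leaves implicit.
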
 

\begin{proof}
    It suffices to check that the composite is the inclusion $\Lambda_+ \hookrightarrow \Lambda$. Since $(x, 0)^{\otimes n} = (x^{\otimes n}, 0)$, it is clear that $\phi_+(\rho) = (\rho, 0)$ for any Schur object $\rho$. Hence $J(\phi_+)$ takes a class $[\rho]$ to $([\rho],[0])$, and $q$ takes the latter to $[\rho] - [0] = [\rho]$. This completes the proof. 
\end{proof}

\begin{defn}
    The \define{co-negation} on $\Lambda$ is the unique ring map $\nu \maps\Lambda \to \Lambda$ that extends the composite of rig maps
    \[
    \Lambda_+ = J(\ksbar) \xrightarrow{J(\phi_-)} J(\G) \xrightarrow{q} \Lambda.
    \]
\end{defn}

\begin{prop}
\label{thm:conegation}
    The map $\nu$ satisfies the co-negation equation, \cref{eq:conegation}. 
\end{prop}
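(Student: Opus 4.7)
The strategy is to reduce to checking \cref{eq:conegation} on classes $[\rho] \in \Lambda$ for $\rho \in \ksbar$ (which generate $\Lambda$ as an abelian group), and to identify the left-hand side with the Euler characteristic of a $\Z_2$-graded chain complex whose homology is transparent. Let $\psi \maps \ksbar \to \G$ be the essentially unique 2-rig map sending the generator $u$ of $\ksbar$ to $(u, u) \in \G$; equivalently, $\psi$ is the composite
\[
    \ksbar \xrightarrow{\alpha} \ksbar \boxtimes \ksbar \xrightarrow{\phi_+ \boxtimes \phi_-} \G \boxtimes \G \xrightarrow{\nabla_\G} \G,
\]
where $\nabla_\G$ is strong symmetric monoidal (hence a 2-rig map) because $\G$ is commutative as a pseudomonoid. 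Unwinding the definitions of $\alpha_\Lambda$ (via $\alpha$ and \cref{prop:J-on-boxtimes-G}) and of $\nu$ as $q \circ J(\phi_-)$, a direct calculation yields
\[
    \nabla_\Lambda \circ (1 \otimes \nu) \circ \alpha_\Lambda([\rho]) \;=\; q\bigl(J(\psi)([\rho])\bigr) \;=\; [C_0] - [C_1]
\]
where $\psi(\rho) = (C_0, C_1) \in \G$.

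Next I would lift $\psi$ through the 2-rig $\mathcal{K}$ of $\Z_2$-graded chain complexes in $\ksbar$ (with its sign-rule tensor product), equipped with the evident forgetful 2-rig map $\mathcal{K} \to \G$. Let $(E, d) \in \mathcal{K}$ be the contractible complex with $E_0 = E_1 = u$ and differential $d \maps E_1 \to E_0$ equal to $\mathrm{id}_u$ (and zero the other way); the map $h = \mathrm{id}_u \maps E_0 \to E_1$ satisfies $dh + hd = \mathrm{id}_E$. By the universal property of $\ksbar$, this data determines an essentially unique 2-rig map $\chi \maps \ksbar \to \mathcal{K}$ whose composite with $\mathcal{K} \to \G$ is $\psi$. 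In particular $\psi(\rho)$ is the underlying $\Z_2$-graded object of the chain complex $\chi(\rho) \in \mathcal{K}$.

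The crux of the argument is then the identification
\[
    H_\bullet\bigl(\chi(\rho)\bigr) \;\cong\; \bigl(\rho(0) \cdot I,\; 0\bigr) \quad \text{in } \G.
\]
To prove this I would decompose $\rho \cong (\rho(0) \cdot I) \oplus \rho'$ with $\rho'(0) = 0$: the constant summand lands under $\chi$ in $\mathcal{K}$ with zero differential and contributes $\rho(0) \cdot I$ to $H_0$, while $\chi(\rho')$ should be contractible because the homotopy $h$ on $(E,d)$ propagates, through the Schur-functorial construction, to a homotopy on $\chi(\rho')$. The main obstacle is making this propagation precise; my preferred route is to enrich $\mathcal{K}$ over the symmetric monoidal category of $\Z_2$-graded chain complexes of $k$-vector spaces, so that any 2-rig map out of $\ksbar$ automatically respects chain homotopies --- a standard fact in characteristic zero since Schur functors are built by splitting idempotents coming from Young symmetrizers. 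With the homology in hand, semisimplicity of $\ksbar$ yields $[C_0] - [C_1] = [H_0] - [H_1]$ in $\Lambda$ for any $(C_0, C_1, d) \in \mathcal{K}$, whence
\[
    \nabla_\Lambda \circ (1 \otimes \nu) \circ \alpha_\Lambda([\rho]) \;=\; [\rho(0) \cdot I] - [0] \;=\; \dim \rho(0) \cdot [I] \;=\; {!}\circ o_\Lambda([\rho]),
\]
establishing the co-negation equation on $[\rho]$.
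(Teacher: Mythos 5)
Your proposal follows the paper's broad strategy: reduce along $\Lambda_+ \hookrightarrow \Lambda$, factor the left pentagon through $\psi = \phi_{(x,x)}$, lift $\psi$ through an auxiliary 2-rig of $\Z_2$-graded chain complexes (your $\mathcal{K}$, the paper's $\DG$) via the contractible generator $E$ (the paper's $M_x(x)$), and finish with the Euler-characteristic identity $[C_0]-[C_1] = [H_0]-[H_1]$. You also compute the correct homology, $H_\bullet(\chi(\rho)) \cong (\rho(0)\cdot I,\, 0)$.

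The genuine difference, and the gap, is \emph{how} that homology identification is proved. The paper first establishes that the homology functor $H \maps \DG \to \G$ is itself a 2-rig map (this is where the K\"unneth theorem enters), and then the identification is \emph{free}: $H \circ M_x \maps \ksbar \to \G$ is a 2-rig map sending the generator $x$ to $H(E) \cong (0,0)$, so by freeness of $\ksbar$ on one generator it is isomorphic to $\phi_+ \circ {!} \circ o$, which sends $\rho$ to $(\rho(0)\cdot I, 0)$. No contractibility argument for $\chi(\rho')$ is needed. You instead try to prove the identification directly by propagating the contracting homotopy on $E$ through the Schur-functorial construction, and you correctly flag this as the main obstacle. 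Your proposed repair --- enriching $\mathcal{K}$ over $\Z_2$-graded chain complexes so that "any 2-rig map out of $\ksbar$ automatically respects chain homotopies" --- is not correct as stated: a 2-rig map is a $\Vect$-enriched symmetric monoidal functor, not a $\mathsf{Ch}$-enriched one, so it does not automatically send homotopy equivalences to homotopy equivalences. One can salvage the direct route in characteristic zero by averaging the contracting homotopy on $E^{\otimes n}$ over $S_n$ so that it descends to $\rho(n) \otimes_{S_n} E^{\otimes n}$, but this is more work than the paper's argument and amounts to re-proving the relevant piece of K\"unneth by hand. A cleaner fix is simply to note (with K\"unneth) that $H$ is a 2-rig map and invoke the universal property of the free 2-rig, which is exactly the paper's route. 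One small omission: you verify only the top pentagon of \cref{eq:conegation}; you should remark, as the paper does, that cocommutativity of $\alpha_\Lambda$ makes the bottom pentagon follow.
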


The proof will be broken down into a series of simple lemmas. The first two are preparatory lemmas requiring no essentially new ideas.

\begin{lem}
\label{lem:reduction-one}
    The restriction of the composite 
    \[
    \Lambda \xrightarrow{\alpha_\Lambda} \Lambda \otimes \Lambda \xrightarrow{1 \otimes \nu} \Lambda \otimes \Lambda \xrightarrow{\nabla_\Lambda} \Lambda
    \]
    along the inclusion $\Lambda_+ \hookrightarrow \Lambda$ equals the composite 
    \[
    \Lambda_+ = J(\ksbar) \xrightarrow{J(\alpha)} J(\ksbar \boxtimes \ksbar) \xrightarrow{J(\phi_+ \boxtimes \phi_{-})} J(\G \boxtimes \G) \xrightarrow{J(\nabla_\G)} J(\G) \xrightarrow{q} \Lambda.
    \]
\end{lem}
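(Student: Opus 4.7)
The plan is to match the two composites by rewriting the right-hand side in terms of the already-established birig coaddition $\alpha_{\Lambda_+}$ on $\Lambda_+$ and then appealing to the fact that $\alpha_\Lambda$ is the group completion of $\alpha_{\Lambda_+}$. Three tools drive this: the natural rig isomorphism $J(A \boxtimes B) \cong J(A) \otimes J(B)$ for $A, B$ among $\ksbar$ and $\G$ (\cref{cor:J-preserves-coproducts} and \cref{prop:J-on-boxtimes-G}), the fact that $q \maps J(\G) \to \Lambda$ is a rig homomorphism, and \cref{prop:phi-plus-gives-id} together with the definition of $\nu$.

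First, I would rewrite each factor of the right-hand composite using these isos. Under $J(\ksbar \boxtimes \ksbar) \cong \Lambda_+ \otimes \Lambda_+$ the map $J(\alpha)$ is by construction the coaddition $\alpha_{\Lambda_+}$ of $\Lambda_+$ (see \cref{thm:lambdaplus-forms-birig}). By naturality of the iso in each slot, $J(\phi_+ \boxtimes \phi_-)$ corresponds to $J(\phi_+) \otimes J(\phi_-) \maps \Lambda_+ \otimes \Lambda_+ \to J(\G) \otimes J(\G)$. And $J(\nabla_\G)$ corresponds to the rig multiplication $\nabla_{J(\G)}$, because $\nabla_\G$ is the symmetric pseudomonoid multiplication of $\G$ and the rig structure on $J(\G)$ is defined precisely so that $J$ converts it into $\nabla_{J(\G)}$ after the iso.

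Second, since $q$ is a rig map, $q \circ \nabla_{J(\G)} = \nabla_\Lambda \circ (q \otimes q)$. Using this together with \cref{prop:phi-plus-gives-id} (which gives $q \circ J(\phi_+) = \iota$, the inclusion $\Lambda_+ \hookrightarrow \Lambda$) and the definition $\nu \circ \iota = q \circ J(\phi_-)$, the right-hand composite simplifies to
\[
\nabla_\Lambda \circ (\iota \otimes (\nu \circ \iota)) \circ \alpha_{\Lambda_+} \;=\; \nabla_\Lambda \circ (1 \otimes \nu) \circ (\iota \otimes \iota) \circ \alpha_{\Lambda_+}.
\]
The coaddition $\alpha_\Lambda$ is obtained from $\alpha_{\Lambda_+}$ by applying the group completion functor $\Z \otimes_\N -$, and hence $\alpha_\Lambda \circ \iota = (\iota \otimes \iota) \circ \alpha_{\Lambda_+}$; substituting this yields the left-hand composite.

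The main obstacle will be pinning down the naturality statement in step one: the canonical rig map $J(A) \otimes J(B) \to J(A \boxtimes B)$ must be natural in both variables and must send $J(\phi) \otimes J(\psi)$ to $J(\phi \boxtimes \psi)$ and match the rig multiplication $\nabla_{J(\G)}$ with $J(\nabla_\G)$. This is essentially a consequence of the lax monoidal structure of $J$ together with the isomorphism results already in hand, but one should chase the universal-property definitions of the coprojections carefully. The rest of the argument is pure diagram chasing.
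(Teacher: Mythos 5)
Your proposal is correct and takes essentially the same approach as the paper: the paper simply declares the proof ``amounts to a simple diagram chase together with \cref{prop:phi-plus-gives-id} and \cref{prop:J-on-boxtimes-G},'' and what you have written out is precisely that diagram chase, filled in carefully with the naturality of the lax monoidal structure of $J$ (used to move $J(\phi_+\boxtimes\phi_-)$ across the comparison isomorphisms), the fact that $q$ is a rig homomorphism, and the extension $\alpha_\Lambda\circ\iota=(\iota\otimes\iota)\circ\alpha_{\Lambda_+}$ coming from group completion.
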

The proof amounts to a simple diagram chase together with 
\cref{prop:phi-plus-gives-id} and \cref{prop:J-on-boxtimes-G}. 

\begin{lem}
\label{lem:reduction-two}
    The composite 
    \[
        \ksbar \xrightarrow{\alpha} \ksbar \boxtimes \ksbar \xrightarrow{\phi_+ \boxtimes \phi_-} \G \boxtimes \G \xrightarrow{\nabla_\G} \G
    \]
    is $\phi_{(x, x)} \maps \ksbar \to G$, the essentially unique 2-rig map that takes $x$ to $(x, x)$. 
\end{lem}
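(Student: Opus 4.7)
The plan is to invoke the universal property that $\ksbar$ is the free 2-rig on one generator: by \cref{thm:representing_U} together with \cref{lem:U2}, any 2-rig morphism with domain $\ksbar$ is determined, uniquely up to symmetric monoidal linear natural isomorphism, by its value at the canonical generator $x = k\cdot\S(-,1)$. Since both $\phi_{(x,x)}$ and the composite $\nabla_\G \circ (\phi_+ \boxtimes \phi_-) \circ \alpha$ are 2-rig maps $\ksbar \to \G$, it suffices to exhibit a coherent isomorphism identifying the image of $x$ under the composite with the object $(x,x) \in \G$. The whole proof then reduces to a short diagram chase driven by universal properties.

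To carry this out I would trace $x$ through the three morphisms in turn. First, by the explicit formula for coaddition established after \cref{lem:n-variable}, we have $\alpha(x) \cong x \boxtimes 1 \oplus 1 \boxtimes x$ in $\ksbar \boxtimes \ksbar$. Second, since $\boxtimes$ realizes the coproduct in $\TRig$ (\cref{lem:2Rig-cocartesian}), the map $\phi_+ \boxtimes \phi_-$ is characterized by $(\phi_+ \boxtimes \phi_-)\circ \iota_1 \cong \iota_1 \circ \phi_+$ and $(\phi_+ \boxtimes \phi_-)\circ \iota_2 \cong \iota_2 \circ \phi_-$; applied to the two summands above this sends $x \boxtimes 1$ to $(x,0)\boxtimes 1_\G$ and $1\boxtimes x$ to $1_\G \boxtimes (0,x)$. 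Third, the codiagonal $\nabla_\G \maps \G \boxtimes \G \to \G$ is the 2-rig map induced by the pair $(1_\G, 1_\G)$ under the coproduct property, so $\nabla_\G(a \boxtimes 1_\G) \cong a$ and $\nabla_\G(1_\G \boxtimes b) \cong b$. Combining these,
\[
(\nabla_\G \circ (\phi_+ \boxtimes \phi_-) \circ \alpha)(x) \;\cong\; (x,0) \oplus (0,x) \;=\; (x,x) \in \G,
\]
where the final equality uses coordinatewise biproducts in $\G \simeq \Schur \times \Schur$.

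The composite therefore sends $x$ to $(x,x)$ up to canonical isomorphism, and the universal property of $\ksbar$ supplies the essentially unique 2-rig isomorphism to $\phi_{(x,x)}$. The only potential obstacle is purely bookkeeping: one must verify that the coprojection characterizations of $\phi_+ \boxtimes \phi_-$ and of $\nabla_\G$ splice together with the explicit coaddition formula without tacitly invoking strictness. Since the invoked universal properties produce canonical coherent isomorphisms, no genuine calculation intervenes, and the argument goes through as sketched.
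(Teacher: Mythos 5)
Your argument matches the paper's proof exactly: the paper also reduces to tracing the generator $x$ through the composite via the element chase $x \mapsto (x\boxtimes 1)\oplus(1\boxtimes x) \mapsto ((x,0)\boxtimes 1)\oplus(1\boxtimes(0,x)) \mapsto (x,0)\oplus(0,x) = (x,x)$, with the freeness of $\ksbar$ on one generator supplying uniqueness up to isomorphism. You have simply made explicit the universal-property justifications for each step, which the paper leaves implicit.
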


\begin{proof}
    This amounts to the element chase 
    \[
    x \mapsto (x \boxtimes 1) \oplus (1 \boxtimes x) \mapsto ((x, 0) \boxtimes 1) \oplus (1 \boxtimes (0, x)) \mapsto (x, 0) \oplus (0, x) = (x, x). \qedhere
    \]
\end{proof}

To make further progress, we introduce another 2-rig. Let $\DG$ denote the 2-rig of differential $\Z_2$-graded Schur functors. The objects of $\DG$ are tuples $(C_0, C_1, d_0, d_1)$ where $C_i \in \ksbar$, and $d_0 \maps C_0 \to C_1$ and $d_1 \maps C_1 \to C_0$ are morphisms of Schur objects such that $d_1 d_0=0$ and $d_0 d_1 = 0$. Morphisms are pairs of morphisms between Schur objects that respect the differentials $d_i$. The tensor product coincides with the tensor product of the underlying objects in $\G$, equipped with differentials defined by the usual rule $\partial(c \otimes d) = \partial c \otimes d + (-1)^{\mathrm{deg} c} c \otimes \partial d$. To be more precise: 
\[
d_{C \otimes D, 0} =\left( \begin{array}{c}
     1_{C_0} \otimes d_{D, 0} + d_{C, 0} \otimes 1_{D_0}  \\
     d_{C, 1} \otimes 1_{D_1} - 1_{C_1} \otimes d_{D, 1} 
\end{array}\right) \maps (C_0 \otimes D_0) \oplus (C_1 \otimes D_1) \to (C_0 \otimes D_1) \oplus (C_1 \otimes D_0)
\]
\[
d_{C \otimes D, 1} = \left( \begin{array}{c}
     1_{C_0} \otimes d_{D, 1} + d_{C, 0} \otimes 1_{D_1}  \\
     d_{C, 1} \otimes 1_{D_0} - 1_{C_1} \otimes d_{D, 0} 
\end{array}\right) \maps (C_0 \otimes D_1) \oplus (C_1 \otimes D_0) \to (C_0 \otimes D_0) \oplus (C_1 \otimes D_1).
\]
The symmetry isomorphism coincides with the symmetry on the underlying graded objects. 

The forgetful functor $U_{\DG} \maps \DG \to \G$ that forgets the differentials $d_i$ is manifestly a 2-rig map. Let $M_x \maps \ksbar \to \DG$ denote the 2-rig map that sends the generator $x$ to the complex where $d_0 = 1_x, d_1 = 0_x$, which we display as 
\[
\begin{tikzcd}
    x
    \arrow[r, bend left, "1"]
    &
    x.
    \arrow[l, bend left, "0"]
\end{tikzcd}\]
(This is the mapping cone of the identity on $(x, 0)$, hence the notation $M_x$.)  Clearly the following diagram commutes up to isomorphism:
\[
\begin{tikzcd}
    &
    \ksbar
    \arrow[dl, swap, "M_x"]
    \arrow[dr, "{\phi_{(x, x)}}"]
    \\
    \DG
    \arrow[rr, swap, "U_{\DG}"]
    &&
    \G.
\end{tikzcd}
\]
Combining this observation with \cref{lem:reduction-one} and \cref{lem:reduction-two}, we have the following result. 
\begin{lem}
\label{prop:reduction-three}
    The restriction of the composite (one side of the co-negation equation) 
    \[
    \Lambda \xrightarrow{\alpha_\Lambda} \Lambda \otimes \Lambda \xrightarrow{1 \otimes \nu} \Lambda \otimes \Lambda \xrightarrow{\nabla_\Lambda} \Lambda
    \]
    along the inclusion $\Lambda_+ \hookrightarrow \Lambda$ equals the composite 
    \[
    \Lambda_+ = J(\ksbar) \xrightarrow{J(M_x)} J(\DG) \xrightarrow{J(U_{\DG})} J(\G) \xrightarrow{q} \Lambda.
    \]
\end{lem}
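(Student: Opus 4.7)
The plan is to assemble the result directly from the two preceding lemmas plus the observed commuting triangle involving $M_x$, $U_{\DG}$, and $\phi_{(x,x)}$. There is essentially no new computation to do; everything is bookkeeping via the functoriality of $J$ on $\TRig\ho$ and the fact that isomorphic 2-rig maps descend to equal rig maps after $J$.

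First I would apply \cref{lem:reduction-one} to rewrite the restriction of
\[
\Lambda \xrightarrow{\alpha_\Lambda} \Lambda \otimes \Lambda \xrightarrow{1 \otimes \nu} \Lambda \otimes \Lambda \xrightarrow{\nabla_\Lambda} \Lambda
\]
along $\Lambda_+ \hookrightarrow \Lambda$ as
\[
q \circ J(\nabla_\G) \circ J(\phi_+ \boxtimes \phi_-) \circ J(\alpha).
\]
Since $J$ is a functor on $\TRig\ho$, this composite equals $q \circ J\bigl(\nabla_\G \circ (\phi_+ \boxtimes \phi_-) \circ \alpha\bigr)$. By \cref{lem:reduction-two}, the composite inside is isomorphic to $\phi_{(x,x)} \maps \ksbar \to \G$, and since $J$ sends 2-isomorphic 2-rig maps to equal rig maps, the whole expression equals $q \circ J(\phi_{(x,x)})$.

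Next I would use the triangle
\[
\begin{tikzcd}
    &
    \ksbar
    \arrow[dl, swap, "M_x"]
    \arrow[dr, "{\phi_{(x, x)}}"]
    \\
    \DG
    \arrow[rr, swap, "U_{\DG}"]
    &&
    \G
\end{tikzcd}
\]
established just before the lemma, which commutes up to a symmetric monoidal linear isomorphism (both legs send the generator $x$ to the same underlying graded object $(x,x)$, and both are essentially unique 2-rig maps doing so). Applying $J$ yields the strict equality $J(U_{\DG}) \circ J(M_x) = J(\phi_{(x,x)})$ in $\Rig$. Substituting into the expression above gives
\[
q \circ J(\phi_{(x,x)}) = q \circ J(U_{\DG}) \circ J(M_x),
\]
which is exactly the composite claimed by the lemma. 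The main (and very minor) subtlety is just to be careful that the triangle involving $M_x$ only commutes up to 2-isomorphism, not strictly; this is harmless because $J$ factors through $\TRig\ho$ where such 2-isomorphisms become equalities.
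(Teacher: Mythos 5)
Your proof is correct and follows exactly the paper's intended argument: the paper states the lemma follows by "combining this observation with \cref{lem:reduction-one} and \cref{lem:reduction-two}" and omits the details, which you have correctly supplied (functoriality of $J$ on $\TRig\ho$, substitution of \cref{lem:reduction-two} into \cref{lem:reduction-one}, then replacement of $\phi_{(x,x)}$ by $U_{\DG}\circ M_x$ after applying $J$).
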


We now turn to the other side  of the co-negation equation. This will be contextualized using the homology functor 
\[
    H \maps \DG \to \G
\]
which sends an object $(d_0 \maps C_0 \to C_1, d_1 \maps C_1 \to C_0)$ to its homology $(H_0, H_1)$. This functor is a 2-rig map, thanks to the algebra underlying the K\"unneth theorem.

\begin{lem}
\label{lem:reduction-four}
    The restriction of the composite (one side of the co-negation equation) 
    \[
    \Lambda \xrightarrow{o_\Lambda} \Z \xrightarrow{!} \Lambda
    \]
    along the inclusion $\Lambda_+ \hookrightarrow \Lambda$ equals the composite 
    \[
    \Lambda_+ = J(\ksbar) \xrightarrow{J(M_x)} J(\DG) \xrightarrow{J(H)} J(\G) \xrightarrow{q} \Lambda.
    \]
\end{lem}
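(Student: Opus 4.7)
The plan is to exploit that the chain complex $M_x = (x \xrightarrow{1_x} x)$ is acyclic: since $d_0 = 1_x$ is an isomorphism, $H(M_x) = (0,0) = 0_\G$. The homology functor $H \maps \DG \to \G$ is a 2-rig map (strong symmetric monoidal via the K\"unneth isomorphism over a field of characteristic zero, with preservation of biproducts and splittings of idempotents automatic for linear functors). Hence the composite $H \circ M_x \maps \ksbar \to \G$ is a 2-rig map whose value on the generator $x$ is $0_\G$.

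On the other hand, the composite $i_\G \circ o \maps \ksbar \to \Fin\Vect \to \G$ also sends $x$ to $0_\G$, since $o(x) = 0$ by definition of the cozero and the essentially unique 2-rig map $i_\G$ from the initial 2-rig (\cref{prop:base_change}) preserves the zero object. By \cref{thm:representing_U}, any 2-rig map out of $\ksbar$ is determined up to symmetric monoidal natural isomorphism by its value on the generator, so
\[
H \circ M_x \;\cong\; i_\G \circ o \maps \ksbar \longrightarrow \G.
\]

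Applying $J$ and postcomposing with $q$, the right-hand side of the desired equality becomes
\[
q \circ J(i_\G) \circ J(o) \maps \Lambda_+ \longrightarrow \Lambda.
\]
By the construction of the birig co-operations on $\Lambda_+$ (\cref{thm:lambdaplus-forms-birig}), $J(o)$ is the cozero $o_{\Lambda_+} \maps \Lambda_+ \to J(\Fin\Vect) \cong \N$, which agrees with the restriction of $o_\Lambda$ along $\Lambda_+ \hookrightarrow \Lambda$ followed by the canonical $\N \hookrightarrow \Z$. The ring map $q \circ J(i_\G) \maps \N \to \Lambda$ sends $1 = [k]$ to $q([k],[0]) = [k] - [0] = 1_\Lambda$, and hence coincides with $\N \hookrightarrow \Z \xrightarrow{!} \Lambda$. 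Combining these identifications shows that both composites $\Lambda_+ \to \Lambda$ equal the map $[\rho] \mapsto (\dim \rho(0)) \cdot 1_\Lambda$, as required.

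The main (mild) obstacle will be verifying that $H \maps \DG \to \G$ is genuinely a 2-rig map. The substantive content is the K\"unneth isomorphism $H(A \otimes B) \cong H(A) \otimes H(B)$ for $\Z_2$-graded complexes of finite-dimensional vector spaces in characteristic zero, together with compatibility with the Koszul sign rule encoded in the symmetries of $\DG$ and $\G$. Once this is in hand, the rest is a clean diagram chase driven by the freeness of $\ksbar$ on one generator, following exactly the pattern of \cref{prop:reduction-three}.
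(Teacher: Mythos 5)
Your proof is correct and follows essentially the same strategy as the paper's: observe $H(M_x(x))\cong(0,0)$, invoke the freeness of $\ksbar$ (via \cref{thm:representing_U}) to identify the two 2-rig maps $\ksbar\to\G$ by their value on the generator, then apply $J$ and postcompose with $q$. The only cosmetic difference is that the paper factors the unique map $\Fin\Vect\to\G$ through $\ksbar$ as $\phi_+\circ\;!$ and then appeals to the prior \cref{prop:phi-plus-gives-id} ($q\circ J(\phi_+)=i$), whereas you work directly with $i_\G\maps\Fin\Vect\to\G$ and compute $q\circ J(i_\G)$ on $1\in\N$ by hand; both are equally valid bookkeeping. (Your concern about $H$ being a 2-rig map is already discharged by the paper just before the lemma's statement, so it is a prerequisite rather than part of this proof.)
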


\begin{proof}
    Because 
    \[
    H\left(\begin{tikzcd}
        x
        \arrow[r, bend left, "1"]
        &
        x
        \arrow[l, bend left, "0"]
    \end{tikzcd}\right) \cong (0, 0)
    \]
    it is clear that the following diagram in $\TRig$ commutes up to isomorphism, since both $\TRig$ composites $\ksbar \to \G$ send the generator $x$ to $(0, 0)$:
    \[
    \begin{tikzcd}
    &
    \Fin\Vect
    \arrow[rr, "!"]
    &&
    \ksbar
    \arrow[dr, "\phi_+"]
    \\
    \ksbar
    \arrow[ur, "\phi_0"]
    \arrow[rr, swap, "M_x"]
    &&
    \DG
    \arrow[rr, swap, "H"]
    &&
    \G. 
    \end{tikzcd}
    \]
    Applying $J$ to this last diagram, and augmenting to this a commutative triangle that expresses the equation $q \circ J(\phi_+) = i \maps \Lambda_+ \hookrightarrow \Lambda$ (\cref{prop:phi-plus-gives-id}), we arrive at a commutative diagram 
    \[
    \begin{tikzcd}
        &
        \mathbb{N}
        \arrow[rr, "!"]
        &&
        \Lambda_+ 
        \arrow[rr, "i"] 
        \arrow[dr, swap, "J(\phi_+)"]
        && 
        \Lambda
        \\
        \Lambda_+ = J(\ksbar)
        \arrow[ur, "o_{\Lambda_+}"]
        \arrow[rr, swap, "J(M_x)"]
        &&
        J(\DG)
        \arrow[rr, swap, "J(H)"]
        &&
        J(\G) \arrow[ur, swap, "q"]
    \end{tikzcd}
    \]
    and the result follows. 
\end{proof}

Combining \cref{prop:reduction-three} and \cref{lem:reduction-four}, the proof of the co-negation equation has been reduced to the following result.

\begin{lem}
    The following diagram commutes. 
    \[
    \begin{tikzcd}
        J(\ksbar) 
        \arrow[swap, d, "J(M_x)"] 
        \arrow[r, "J(M_x)"] 
        &
        J(\DG) 
        \arrow[r, "J(U_{\DG})"] 
        & 
        J(\G) 
        \arrow[d, "q"] 
        \\
        J(\DG) 
        \arrow[r, swap, "J(H)"] 
        & 
        J(\G) 
        \arrow[r, swap, "q"] 
        & 
        \Lambda
    \end{tikzcd}
    \]
\end{lem}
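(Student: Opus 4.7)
The plan is to prove a stronger statement, namely that $q \circ J(U_{\DG}) = q \circ J(H)$ as functions $J(\DG) \to \Lambda$; the required commutativity then follows immediately by precomposing with $J(M_x)$. Concretely, this amounts to establishing the Euler characteristic identity
\[
[C_0] - [C_1] \;=\; [H_0] - [H_1] \qquad \text{in } \Lambda
\]
for every object $(C_0, C_1, d_0, d_1) \in \DG$, where $H_i$ denotes the $i$-th homology.

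The key input will be semisimplicity. By \cref{thm:poly_versus_kS}, $\ksbar \simeq \Poly$, and by Maschke's theorem over a field of characteristic zero $\Poly$ is a semisimple abelian category: every object is a finite direct sum of irreducibles, and every morphism $f \maps A \to B$ admits splittings $A \cong \ker(f) \oplus \im(f)$ and $B \cong \im(f) \oplus \coker(f)$. Applying this to $d_0 \maps C_0 \to C_1$ and $d_1 \maps C_1 \to C_0$, the hypotheses $d_1 d_0 = 0$ and $d_0 d_1 = 0$ give $\im(d_0) \subseteq \ker(d_1)$ and $\im(d_1) \subseteq \ker(d_0)$, so one can split further
\[
\ker(d_0) \cong \im(d_1) \oplus H_0, \qquad \ker(d_1) \cong \im(d_0) \oplus H_1.
\]
Combining these isomorphisms yields
\[
C_0 \cong H_0 \oplus \im(d_1) \oplus \im(d_0), \qquad C_1 \cong H_1 \oplus \im(d_0) \oplus \im(d_1)
\]
in $\ksbar$. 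Passing to classes in $\Lambda_+$ and subtracting in $\Lambda$, the two $\im$-terms appear with opposite signs and cancel, producing the Euler characteristic identity. By the definition $q([A],[B]) = [A] - [B]$, this is precisely $q([C_0],[C_1]) = q([H_0],[H_1])$, which is the desired equality on an arbitrary object of $J(\DG)$, and hence in particular after restriction along $J(M_x)$.

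The main obstacle will be justifying that $\ksbar$ actually supplies the kernels, images, and splittings invoked above; by definition $\ksbar$ is merely Cauchy complete, so only biproducts and splittings of idempotents are guaranteed a priori. This is precisely where the concrete identification $\ksbar \simeq \Poly$ becomes essential: Maschke's theorem upgrades $\Poly$ to a genuine semisimple abelian category, so all the kernels, cokernels, and splittings used above exist in $\ksbar$ via the equivalence, and the standard Euler characteristic computation goes through without further subtlety.
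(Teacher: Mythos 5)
Your proposal is correct and matches the paper's argument: both reduce to showing $q$ coequalizes $J(U_{\DG})$ and $J(H)$, i.e.\ the Euler-characteristic identity $[C_0]-[C_1]=[H_0]-[H_1]$, and both establish it by splitting the two-term complex into cycles, boundaries, and homology using the ability to split exact sequences in $\ksbar$. Your extra care in grounding those splittings in $\ksbar \simeq \Poly$ and Maschke's theorem makes explicit what the paper leaves implicit, but it is the same proof.
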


\begin{proof}
    It clearly suffices to show that $q$ coequalizes the two maps 
    \[
    J(U_{\DG}), J(H) \maps J(\DG) \rightrightarrows J(\G).
    \]
    But if $C$ is an object of $\DG$, with underlying graded object $(C_0, C_1)$ and homology object $(H_0, H_1)$, this says precisely
    \[
    [C_0] - [C_1] = [H_0] - [H_1].
    \]
    This is a well-known fact about the Euler characteristic of a chain complex---here a 2-term chain complex in $\ksbar$. It follows easily from our ability to split exact sequences in $\ksbar$, which gives these equations involving cycles $Z_i$ and boundaries $B_i$: 
    \[
    [C_0] = [Z_0] + [B_1], \quad [C_1] = [Z_1] + [B_0], \quad [Z_0] = [B_0] + [H_0], \quad 
    [Z_1] = [B_1] + [H_1]. \qedhere
    \]
\end{proof}

Having proved the co-negation equation, we have established the following result: 

\begin{thm}
\label{thm:lambda_biring}
The birig structure on $J(\ksbar)$ extends to a biring structure on $K(\ksbar) = \Z \otimes_\N J(\ksbar)$. 
\end{thm}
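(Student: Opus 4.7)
My plan is to assemble the biring structure on $\Lambda$ by extending the birig structure on $\Lambda_+$ along the group completion $\Lambda_+ \hookrightarrow \Lambda$ and adjoining the co-negation $\nu$ just constructed. The main conceptual subtlety has already been disposed of in \cref{thm:conegation}, so what remains is a formal extension-by-adjunction argument plus one uniqueness observation.

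First, I would note that since $\Z \otimes_\N -\maps \Rig \to \Ring$ is left adjoint to the full inclusion $i\maps \Ring \hookrightarrow \Rig$, any rig homomorphism from $\Lambda_+$ into a ring $R$ extends uniquely to a ring homomorphism from $\Lambda$ into $R$. Composing each birig co-operation of $\Lambda_+$ with the group-completion map of its codomain yields a rig map into a ring, so each extends uniquely to a ring map out of $\Lambda$. Using the strong monoidal isomorphism $\Z \otimes_\N (\Lambda_+ \otimes_\N \Lambda_+) \cong \Lambda \otimes \Lambda$, these extensions take the expected forms
\[
\alpha_\Lambda \maps \Lambda \to \Lambda \otimes \Lambda,\quad o_\Lambda \maps \Lambda \to \Z,\quad \mu_\Lambda \maps \Lambda \to \Lambda \otimes \Lambda,\quad \epsilon_\Lambda \maps \Lambda \to \Z.
\]

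Next, I would verify that these four maps, together with $\nu \maps \Lambda \to \Lambda$, satisfy the dualized commutative-ring axioms that define a biring. Every axiom that does not involve $\nu$ is a commutativity of a diagram of ring maps out of $\Lambda$; when pulled back along $\Lambda_+ \hookrightarrow \Lambda$, each such diagram reduces to the analogous birig axiom for $\Lambda_+$ established in \cref{thm:lambdaplus-forms-birig}. Since any two ring homomorphisms out of $\Lambda$ that agree on $\Lambda_+$ are equal (by the universal property of group completion), all these axioms transport automatically from $\Lambda_+$ to $\Lambda$. The only remaining axiom is the co-negation equation \cref{eq:conegation}, which was proved directly in \cref{thm:conegation}.

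Finally, I would record the uniqueness observation already mentioned in the excerpt: since $i\maps \Ring \hookrightarrow \Rig$ is full and faithful, a rig structure on the functor $\Ring(\Lambda, -)$ can lift to a ring structure in at most one way, so the biring structure on $\Lambda$ obtained above is canonical and indeed \emph{extends} the birig structure on $\Lambda_+$. The hard part of this theorem was really the construction and verification of $\nu$ via $\Z_2$-graded and differential $\Z_2$-graded Schur functors, reducing the co-negation equation to the Euler-characteristic identity $[C_0] - [C_1] = [H_0] - [H_1]$ in $\Lambda$; with that already in hand, the passage from birig to biring is a mechanical consequence of the adjunction $\Z \otimes_\N - \dashv i$.
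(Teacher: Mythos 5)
Your proposal is correct and follows essentially the same approach as the paper. The paper packages the ``axioms not involving $\nu$'' step a bit more slickly: rather than extending each co-operation one at a time and then checking the birig axioms by transport, it observes that $\Phi_{\Lambda_+} \circ i \maps \Ring \to \Rig$ is automatically a lift of the representable $\Ring(\Lambda,-)$ through $U \maps \Rig \to \Set$, so the \emph{rig} structure on $\Ring(\Lambda,-)$ comes for free and the sole remaining question is whether it is a \emph{ring} structure, i.e.\ whether a co-negation exists satisfying \cref{eq:conegation}; your transport-of-axioms argument amounts to the same thing. In both treatments, the real content is \cref{thm:conegation}, and the uniqueness-via-full-faithfulness observation is identical.
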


We may explicitly calculate the effect of co-negation on the class of a Schur functor $\rho$, by evaluating $\rho$ at the object $(0, x)$ in the 2-rig $\G$:
\[
[\rho](0,x) = q \left[\sum_{n \ge 0} \rho(n) \otimes_{S_n} (0, x)^{\otimes n}\right].
\]
The $n$th summand here lives in grade $1$ when $n$ is odd, and grade $0$ when $n$ is even. Taking into account that each transposition of tensor factors $(0, x)$ introduces a sign change, since $(0, x)$ is in odd degree, the result is 
\[
\nu([\rho]) = \sum_{n \ge 0} (-1)^n[\rho(n)][\det(n)]
\]
where $\det(n)$ is the alternating representation of $S_n$. Note that because $\rho(n)$ vanishes except for finitely many $n$, this is effectively a finite sum.

It is interesting to compare Joyal's `rule of signs' for so-called virtual species, i.e.\ formal differences of ordinary $\Set$-valued species \cite{AnalyticFunctors}. Joyal constructs a virtual species $\exp(-X)$ as a geometric series 
\[
\exp(X)^{-1} = \sum_{n \geq 0} (-1)^k (\exp(X)-1)^k = E_0(X) - E_1(X).
\]
Here
\[   E_i(X) = \sum_{n \ge 0} E_i[n] \times_{S_n} X^n \]
where the $n^{th}$ coefficient object $E_0[n]$ is the set of ordered partitions of the set $\{1, \ldots, n\}$ into an even number of blocks, while $E_1[n]$ is the set of ordered partitions into an odd number of blocks. He uses this to construct a virtual species  $F(-X)$ for any given species $F$, given by
\[
F(-X)[n] = F[n] \times \exp(-X)[n].
\]
Thus, Joyal's virtual species $\exp(-X)$ is analogous to the linear species whose $n$th coefficient object is $(-1)^n \det(n)$; indeed it is virtually equivalent to this upon applying linearization $\Set \to \Vect$ to the coefficient objects.

To conclude, we address the ring-plethory structure on $\Lambda$.  First, the rig-plethory structure on $\Lambda_+ = J(\ksbar)$, given by a comonad structure on $\Phi_{\Lambda_+} \maps \Rig \to \Rig$, \emph{restricts} to a comonad structure on $\Phi_\Lambda \maps \Ring \to \Ring$, making $\Lambda = K(\ksbar)$ into a ring-plethory. This follows easily from the full faithfulness of the inclusion $i \maps \Ring \to \Rig$. For example, from the comonad comultiplication $\delta \maps \Phi_{\Lambda_+} \to \Phi_{\Lambda_+} \circ \Phi_{\Lambda_+}$, we obtain a composite 
\[
    i \Phi_\Lambda \cong \Phi_{\Lambda_+} i \xrightarrow{\delta \circ i} \Phi_{\Lambda_+} \Phi_{\Lambda_+} i \cong \Phi_{\Lambda_+} i \Phi_\Lambda \cong i \Phi_\Lambda \Phi_\Lambda
\] 
and this composite gives a comultiplication $\Phi_\Lambda \to \Phi_\Lambda \Phi_\Lambda$, by full faithfulness of $i$. The requisite equations for a comonad are easily established using the naturality of the isomorphism $i \Phi_\Lambda \cong \Phi_{\Lambda_+} i$.   In summary, we have:

\begin{thm}
\label{thm:lambda_plethory}
    The biring structure on $\Lambda = K(\ksbar)$ given in \cref{thm:lambda_biring} extends to a ring-plethory structure. 
\end{thm}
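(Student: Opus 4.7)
The plan is to show that the rig-plethory comonad $\Phi_{\Lambda_+} \maps \Rig \to \Rig$ from \cref{thm:lambdaplus-forms-rig-plethory} (obtained via the third perspective of \cref{thm:MBialg}) restricts along the full inclusion $i \maps \Ring \to \Rig$ to a right adjoint comonad $\Phi_\Lambda \maps \Ring \to \Ring$. By the correspondence between right adjoint comonads on $M\Alg$ and $M$-plethories developed in \cref{sec:M-plethories}, this will endow $\Lambda$ with a ring-plethory structure, and by construction the underlying biring will coincide with the one built in \cref{thm:lambda_biring}.

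First I would check that $\Phi_{\Lambda_+}$ sends rings to rings. For any ring $R$, the underlying set of $\Phi_{\Lambda_+}(R)$ is $\Rig(\Lambda_+, R) \cong \Ring(\Lambda, R)$ by the adjunction $\Z \otimes_\N (-) \dashv i$. By \cref{thm:lambda_biring}, $\Ring(\Lambda, R)$ inherits a ring structure from the biring $\Lambda$ that lifts the rig structure already present from $\Lambda_+$: the biring co-operations on $\Lambda$ restrict along $\Lambda_+ \hookrightarrow \Lambda$ to the birig co-operations on $\Lambda_+$, and the new co-negation $\nu$ supplies additive inverses. Hence $\Phi_{\Lambda_+}(iR)$ lies in the replete image of $i$, so there exists a functor $\Phi_\Lambda \maps \Ring \to \Ring$ together with a natural isomorphism $\beta \maps i\Phi_\Lambda \xrightarrow{\sim} \Phi_{\Lambda_+}\, i$, unique up to unique isomorphism by full faithfulness of $i$.

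Next I would transport the comonad structure. The composite
\[ i\Phi_\Lambda \xrightarrow{\beta} \Phi_{\Lambda_+}\, i \xrightarrow{\delta\, i} \Phi_{\Lambda_+}\Phi_{\Lambda_+}\, i \xrightarrow{\Phi_{\Lambda_+}\beta^{-1}} \Phi_{\Lambda_+}\, i\,\Phi_\Lambda \xrightarrow{\beta^{-1}\Phi_\Lambda} i\,\Phi_\Lambda\Phi_\Lambda, \]
together with $i\Phi_\Lambda \xrightarrow{\beta} \Phi_{\Lambda_+}\, i \xrightarrow{\varepsilon\, i} i$, descends by full faithfulness of $i$ to unique natural transformations $\delta_\Lambda \maps \Phi_\Lambda \To \Phi_\Lambda\Phi_\Lambda$ and $\varepsilon_\Lambda \maps \Phi_\Lambda \To 1_\Ring$. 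The comonad axioms for $(\delta_\Lambda, \varepsilon_\Lambda)$ reduce after whiskering with $i$ to pasting equalities built from the comonad axioms for $(\delta, \varepsilon)$ and the naturality of $\beta$, so they hold by faithfulness of $i$. Finally, $\Phi_\Lambda$ is a right adjoint: its composite with $U \maps \Ring \to \Set$ represents $\Ring(\Lambda, -)$, so by the ring-theoretic analogue of \cref{lem:rightrepable} it has a left adjoint $\Lambda \odot (-) \maps \Ring \to \Ring$, constructed by the same coequalizer recipe as in the proof of \cref{thm:liftadjoint} but internal to $\Ring$.

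The one step requiring genuine care is the identification of the biring structure underlying this ring-plethory with the one constructed by hand in \cref{thm:lambda_biring}; this is the main potential obstacle, though not a deep one. Both biring structures arise as lifts of the representable $\Ring(\Lambda, -) \maps \Ring \to \Set$ through the forgetful functor to $\Set$, and any such lift is determined by the co-operations it induces on the representing object via the Yoneda lemma; since those co-operations in both cases are obtained by transport of the 2-birig co-operations on $\ksbar$ through $J$ followed by group completion, the two biring structures on $\Lambda$ agree. This gives the desired extension of the biring structure from \cref{thm:lambda_biring} to a full ring-plethory structure.
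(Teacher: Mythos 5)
Your proposal is correct and takes essentially the same approach as the paper: restrict the rig-plethory comonad $\Phi_{\Lambda_+}$ along the full and faithful inclusion $i \maps \Ring \hookrightarrow \Rig$, using \cref{thm:lambda_biring} to know that $\Phi_{\Lambda_+}$ sends rings to rings, and transport the comonad structure via the natural isomorphism $i\Phi_\Lambda \cong \Phi_{\Lambda_+}\, i$ by full faithfulness. The paper states this more tersely (it gives only the transport of $\delta$ and appeals to naturality for the axioms), while you spell out the counit, the comonad equations, the right-adjointness via \cref{lem:rightrepable}/\cref{thm:liftadjoint}, and the agreement of biring structures — all of which the paper leaves implicit but which are exactly the steps underlying its one-paragraph argument.
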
 

\subsection{Conclusions}

We have worked out the theory of categorified plethysm and used it to show that $\Lambda$ is a ring-plethory without any reference to the usual theory of symmetric functions.  While this was precisely our goal, the reader may still wonder: why does this ring-plethory structure on $\Lambda$ match the usual ring-plethory structure on the ring of symmetric functions?  And: what does all this have to do with $\lambda$-rings as they are usually defined?

Macdonald's book fills in some of the missing links \cite[Appendix IA]{Macdonald}.   Recall that we have established equivalences
\[   \Schur \simeq \Poly \simeq U(\ksbar) \]
For each $n \ge 1$ the exterior power operation $\Lambda^n \in \Schur$ thus gives an element $\lambda^n \in K(\ksbar)$.  In fact $\Lambda$ is the free ring on these elements.  Indeed, Macdonald gives an explicit isomorphism between $K(\ksbar)$ and the ring of symmetric functions, which maps $\lambda^n$ to the $n$th elementary symmetric function: the sum in $\Z[[x_1, x_2, \dots ]]$ of all products of $n$ distinct variables.  Macdonald also defines a plethysm product on symmetric functions, and shows that it is a decategorified version of the substitution product described in \cref{cor:pleth}.

Traditionally a $\lambda$-ring is defined as a ring equipped with operations $\lambda^n$ obeying identities motivated by properties of the exterior power operations $\Lambda^n \in \Schur$.  Our work gives another outlook on $\lambda$-rings.  According to the discussion following \cref{def:Mplethory}, we may view the ring-plethory structure on $\Lambda$ as the left adjoint monad structure on the functor $\Psi_\Lambda \maps \Ring \to \Ring$. The algebras of this monad were studied by Tall and Wraith \cite{TallWraith}, who showed that they are equivalent to `special $\lambda$-rings', which are nowadays usually just called $\lambda$-rings. In short, $\Psi_\Lambda$ is the monad for $\lambda$-rings. 

Furthermore, according to the discussion after \cref{def:Mplethory}, the left adjoint monad $\Psi_\Lambda$ gives a right adjoint comonad $\Phi_\Lambda \maps \Ring \to \Ring$ such that
\[\Phi_\Lambda(R) = \Ring(\Lambda, R). \]
Hazewinkel \cite[Sec.\ 16.59]{Hazewinkel} discusses this comonad and shows that $\Phi_\Lambda(R)$ is the so-called `big Witt ring' of $R$, a familiar object in the theory of $\lambda$-rings.  We may thus call $\Phi_\Lambda$ the \define{big Witt comonad}.  Since $\Lambda$ is the free ring on the elements $\lambda^n$, there is a bijection
\[  \begin{array}{ccl}
\Ring(\Lambda,R) & \to & 1 + t R[[t]] \subseteq R[[t]] \\ \\
f & \mapsto & 1 + \displaystyle{\sum_{n \ge 1} f(\lambda^n) t^n }
\end{array}
\]
Under this bijection addition in the big Witt ring corresponds to multiplication of formal power series with constant term $1$, while multiplication is given by a less obvious formula, explained for example by Lenstra \cite{Lenstra}.  See also Borger and Wieland for a more abstract treatment using ring-plethories \cite[Sec.\ 3.2]{Plethystic}.  

Now, Eilenberg and Moore \cite{EMAdjoint} showed that the category of algebras of any left adjoint monad is equivalent to the category of coalgebras of its associated right adjoint comonad. Thus, $\lambda$-rings are equivalent to coalgebras of the big Witt comonad.   Concretely, putting a $\lambda$-ring structure on $R$ is equivalent to a making it a coalgebra of the big Witt comonad with coaction
\[  \begin{array}{rcl}
\eta \maps R &\to&  1 + t R[[t]] \cong \Ring(\Lambda,R) \\ \\
r & \mapsto &\displaystyle{1 + \sum_{n \ge 1} \lambda^n(r) t^n}.
\end{array}
\]

A final consequence of our work is that for any 2-rig $\R$, the ring $K(\R)$ acquires the structure of a $\lambda$-ring.  To obtain this, we first put a $\Phi_{\Lambda_+}$-coalgebra structure on $J(\R)$ given by a rig map 
\[h \maps J(\R) \to \Phi_{\Lambda_+}(J(\R))\]
whose underlying function is given as a composite
\[
    UJ(\R) \cong \TRig\ho(\ksbar, \R) \to \Rig(J(\ksbar), J(\R)) = \Rig(\Lambda_+, J(\R)) \cong U\Phi_{\Lambda_+}J(\R).
\]
Then $K(\R)$ acquires a $\Phi_\Lambda$-coalgebra structure given by the unique ring map $K(\R) \to \Phi_\Lambda K(\R)$ that extends the rig map formed as the composite 
\[
    J(\R) \xrightarrow{\;h\;} \Phi_{\Lambda_+} J(\R) \xrightarrow{\Phi_{\Lambda_+} \iota} \Phi_{\Lambda_+} iK(\R) \cong i\Phi_\Lambda K(\R)
\]
where $\iota \maps J(\R) \to iK(\R)$ is the canonical inclusion.  In summary:

\begin{cor}
    For any 2-rig $\R$, $K(\R)$ naturally has the structure of a $\lambda$-ring.
\end{cor}

\bibliographystyle{alpha}
\bibliography{references}

\end{document}